\documentclass{article}
\usepackage{arxiv}

\usepackage[utf8]{inputenc} % allow utf-8 input
\usepackage[T1]{fontenc}    % use 8-bit T1 fonts
\usepackage{hyperref}       % hyperlinks
\usepackage{url}            % simple URL typese}ing
\usepackage{booktabs}       % professional-quality tables
\usepackage{amsfonts}       % blackboard math symbols
\usepackage{nicefrac}       % compact symbols for 1/2, etc.
\usepackage[english]{babel}
\usepackage{ae}
\usepackage{eucal}
\usepackage[dvips]{graphicx}
\usepackage{epsfig}
\usepackage{graphicx}
\usepackage{amsfonts,amsthm}
\usepackage{amssymb}
\usepackage{a4}
\usepackage{apu}
\usepackage{xcolor}
\usepackage{amsmath}
\usepackage{mathtools}
\usepackage{multicol}

% %%%%%%%%%%%%%%%%%%%%
% Standard theorem-like environments
% %%%%%%%%%%%%%%%%%%%%
\usepackage{dcolumn,amsthm}

\renewenvironment{proof}[1][Proof]{\noindent\textit{#1. } }{\hfill$\square$}

\newtheoremstyle{theorem}{6pt}{6pt}{\rm}{}{\sffamily}{ }{ }{}
\theoremstyle{theorem}

\newtheoremstyle{lemma}{6pt}{6pt}{\rm}{}{\sffamily}{ }{ }{}
\theoremstyle{lemma}

\newtheoremstyle{example}{6pt}{6pt}{\rm}{}{\sffamily}{ }{ }{}
\theoremstyle{example}

\newtheoremstyle{corollary}{6pt}{6pt}{\rm}{}{\sffamily}{ }{ }{}
\theoremstyle{corollary}

\newtheoremstyle{definition}{6pt}{6pt}{\rm}{}{\sffamily}{ }{ }{}
\theoremstyle{definition}

\newtheoremstyle{remark}{6pt}{6pt}{\rm}{}{\sffamily}{ }{ }{}
\theoremstyle{remark}

\newtheoremstyle{approximation}{6pt}{6pt}{\rm}{}{\sffamily}{ }{ }{}
\theoremstyle{approximation}

\newtheoremstyle{scheme}{6pt}{6pt}{\rm}{}{\sffamily}{ }{ }{}
\theoremstyle{scheme}

% %%%%%%%%%%%%%%%%%%%%
% %%%%%%%%%%%%%%%%%%%%

%\usepackage{biblatex}
%\addbibresource{euler.bib}
\usepackage{csquotes}

\title{Explicit solutions to the 3D incompressible Euler equations in Lagrangian formulation}

\author{
   Tomi Saleva \\
   Department of Physics and Mathematics\\
   University of Eastern Finland\\
   P.O. box 111, FI-80101 Joensuu, Finland  \\
   \text{tomisal@student.uef.fi} \\
      \And
   Jukka Tuomela \\
   Department of Physics and Mathematics\\
   University of Eastern Finland\\
   P.O. box 111, FI-80101 Joensuu, Finland  \\
   \text{jukka.tuomela@uef.fi} \\
}

\begin{document}

\maketitle

\begin{abstract}
We introduce many families of explicit solutions to the three dimensional incompressible Euler equations for nonviscous fluid flows using the Lagrangian framework. Almost no exact Lagrangian solutions exist in the literature prior to this study. We search for solutions where the time component and the spatial component are separated, applying the same ideas we used previously in the two dimensional case. We show a general method to derive separate constraint equations for the spatial component and the time component. Using this provides us with a plenty of solution sets exhibiting several different types of fluid behaviour, but since they are computationally heavy to analyze, we have to restrict deeper analysis to the most interesting cases only. It is also possible and perhaps even probable that there exist more solutions of the separation of variables type beyond what we have found.
\end{abstract}

\textbf{\emph Mathematics Subject Classification (2020)} 35A09; 35Q31; 76B99

\keywords{Explicit solutions, Euler equations, Lagrangian formulation, Fluid mechanics}

\thanks{The first author was supported by Finnish Cultural Foundation.}

\section{Introduction}

This is the third paper, following \cite{maju,toju}, in our research of finding explicit solutions to the incompressible Euler equations in the Lagrangian framework. The Lagrangian framework is one of the two main approaches to describing fluid motion. In the Lagrangian representation the flow is described by giving the trajectories of individual particles. The other main approach is the Eulerian framework which considers the velocity field or the vorticity field of the flow. For general background on the Euler equations we refer to \cite{constantin}, and for the Lagrangian formulation in particular we refer to \cite{bennett}. Lagrangian formulation has been applied to flow problems with group theory in \cite{andreev}.   

In our two previous articles we studied the two-dimensional case, finding several solution sets, and in the present paper we apply similar ideas to the three-dimensional case. The goal is to find solutions of the separation of variables type, where space and time coordinates are treated separately.  The first explicit solutions of this type were already found in 19th century by Gerstner and Kirchhoff in the 2D case \cite{G,K}. In the recent decades these solutions have been generalized using harmonic maps, see for example \cite{Abrashkin,AY,AC,CM}. All these solutions were of the form where the time component and the spatial component could be separated, and finally in \cite{toju} we systematically found probably all possible solutions of the separation of variables type to the 2D Euler equations in Lagrangian formulation, including ones that could not be found with harmonic maps.

However, it seems that there are surprisingly few articles dealing with the 3D case, the only exception we found is \cite{yakzen}. After all, many scientists have studied this problem for a long time in the two-dimensional context and it would seem natural to try to find similar solutions also in three dimensions which is of course physically the most relevant case. Perhaps one reason is that since classically complex functions were used to analyze the two-dimensional case, the generalization to the three-dimensional case appeared difficult. 

In \cite{yakzen} the problem is approached in a slightly nonconventional variant of the Lagrangian formulation, not describing the system for the particle trajectory map $\fii$ but rather for its differential $d\fii$. For the spatial part they in fact used the complex formulation for two of the components and a real function for the third component. To find exact solutions, they considered cases where the time component and the spatial component are separated for $d\fii$. A simple integration argument shows that then $\fii$ is also of the separation of variables form and thus these solutions can also be found with the technique presented in this paper.

Anyway in \cite{maju, toju} we showed that even in two dimensions the situation is best analyzed using real functions, and thus a priori it seemed natural to believe that similar techniques should be applicable also  in three dimensional case. In the present paper we show that this is indeed the case: even though certain classes of solutions can equivalently be analyzed with complex formulation, one can find many more large classes of solutions operating with real functions. In fact it is not possible to describe all solutions: there are simply so many possibilities that one cannot treat them all in a short article like this. Also it is not possible to describe all computations in detail. Some intermediate formulas and expressions are simply too big to be written down. However, this is not a problem in the sense that the verification of the final results is a straightforward computation. Of course this computation would be very tedious if done by hand. 

Perhaps one should also clarify what we  mean by the word ''explicit'' in the title. In some cases the solution set cannot really be described by completely explicit formulas. In these cases one may think that the word ''explicit'' refers to the explicit description of the structure of the solutions. However, we think that these structural descriptions can also be very useful in applications: combining numerical and symbolic computation one can use them to ''build'' specific solutions with the desired properties. Also one can then analyze what kind of properties are actually possible for a given family of solutions.

In Section 2 we introduce some useful notions and results which are needed later. In Section 3 we formulate our framework precisely. Then in Sections 4 to 8 we present our solutions. We start with simple cases which can be described more thoroughly; these cases show the reader the main ideas in the computations. In later sections many details must be suppressed, but the computations are really very similar to simpler cases. One does not need any new methods to deal with the complicated cases, the difficulty comes just from the sheer size of the problem. Unlike in the 2D case we studied in \cite{toju}, here we will not consider all possible families of solutions due to their vast number. Instead, we concentrate on those cases which to us seem to be the most promising from the physical point of view. However, we also indicate some systems which we know lead to nontrivial explicit solutions but which are not analyzed in the present paper. It is also highly possible that there are cases that we did not discover altogether.

\section{Preliminaries and notation}

\subsection{Euler parameters}

\label{euler-parameters}
We need to represent rotations in $\mathbb{R}^3$. There are several ways to do this but  Euler parameters are most convenient for our purposes.
Let $a=(a_{0},a_{1},a_{2},a_{3})$ and let us define
the following matrices:
\begin{align*}
\widetilde{H}_a& =
\begin{pmatrix}
-a_{1} & a_{0} & -a_{3} & a_{2} \\
-a_{2} & a_{3} & a_{0} & -a_{1} \\
-a_{3} & -a_{2} & a_{1} & a_{0}
\end{pmatrix}
\quad ,\quad H_a=
\begin{pmatrix}
-a_{1} & a_{0} & a_{3} & -a_{2} \\
-a_{2} & -a_{3} & a_{0} & a_{1} \\
-a_{3} & a_{2} & -a_{1} & a_{0}
\end{pmatrix}
\\[3mm]
\tilde K_a &=
\begin{pmatrix}
a_0 & a_1 & a_2 & a_3 \\
-a_{1} & a_{0} & -a_{3} & a_{2} \\
-a_{2} & a_{3} & a_{0} & -a_{1} \\
-a_{3} & -a_{2} & a_{1} & a_{0}
\end{pmatrix}
\quad ,\quad K_a=
\begin{pmatrix}
a_0 & a_1 & a_2 & a_3 \\
-a_{1} & a_{0} & a_{3} & -a_{2} \\
-a_{2} & -a_{3} & a_{0} & a_{1} \\
-a_{3} & a_{2} & -a_{1} & a_{0}
\end{pmatrix}
\\[3mm]
R_a=\widetilde{H}_aH^{T}_a & =
\begin{pmatrix}
a_0^2+a_1^2- a_2^2-a_3^2 & 2a_1a_2-2a_0a_3 &
2a_1a_3+2a_0a_2 \\
2a_1a_2+2a_0a_3 & a_0^2-a_1^2+a_2^2-a_3^2 &
2a_2a_3-2a_0a_1 \\
2a_1a_3-2a_0a_2 & 2a_2a_3+2a_0a_1 & a_0^2-a_1^2-
a_2^2+a_3^2
\end{pmatrix}
\ .
\end{align*}
If $|a|=1$ then $R_a\in\mathbb{SO}(3)$ and $\tilde K_a$, $K_a\in
\mathbb{SO}(4)$. Since  $R_a=R_{-a}$ the sphere $S^3$ is the double cover of $\mathbb{SO}(3)$.

\subsection{PDE}
Let $u$ and $v$ be functions of two variables; there are two natural conventions regarding the sign in the Cauchy--Riemann equations:
\[
  \begin{cases}
      u_x-v_y=0\\
      u_y+v_x=0
  \end{cases}\quad,\quad
  \begin{cases}
      u_x+v_y=0\\
      u_y-v_x=0
  \end{cases}\ .
\]
It turns out that for us it is more convenient to use the second system. We will call this the anti CR system. 

For general PDE systems it is more convenient to use multiindices to denote derivatives. Hence if $u=(u^1,\dots,u^m)$  is some map the derivatives of its components are
\[
    u_\nu^j=\partial^\nu u^j=
    \frac{\partial^{|\nu|}u^j}{\partial x_1^{\nu_1}\cdots\partial x_n^{\nu_n}}\ .
\]
We will say several times below that a solution to a certain PDE system exists by standard theorems.  This means that the necessary details can easily be found in \cite{evans}.
We will also need some elementary notions of differential geometry in the analysis; for details we refer to \cite{lee}.

We will analyze below many rather complicated ODE and PDE systems. Since the analysis often cannot be given very explicitly it is perhaps helpful to discuss the general nature of systems and their solutions.  For a general treatment of these issues we refer to \cite{werner} and the many references therein. 

Now all our equations are differential polynomials so that some kind of algorithmic treatment is possible at least in principle. A fundamental theorem in differential algebra says that
\begin{itemize}
    \item[] \emph{any radical differential ideal is a finite intersection of prime differential ideals.} 
\end{itemize}
One may interpret this by saying that prime differential ideals correspond to certain families of solutions, and hence the theorem says that there are in fact only finitely many essentially different families of solutions. 

Another complication is that our systems contain parameters, and in general one must make case distinctions, depending on the values of parameters. The differential elimination algorithm \textsf{rifsimp} \cite{rif0} which is implemented in {\sc Maple} and which we will use at times follows the  strategy that the default is to look for the largest family. However, it must be kept in mind that there may be also other solutions, not contained in the family given.  

In the solutions given below we must at times make case distinctions so that there are several families of solutions, i.e. the differential ideal is not prime. However, we do not need to take into account all cases because we can discard the cases which are irrelevant in the present context. We will explain below more precisely what this means.

\subsection{Minors}
Let $A\in \mathbb{R}^{3\times m}$; the columns of $A$ are denoted by $A_j$. Let us further set $p_{ijk}=\det(A_i,A_j,A_k)$. The following formulas will be useful in computations.
\begin{lemma}
\begin{align*}
 &   p_{123}p_{145}-p_{124}p_{135}+p_{125}p_{134}=0\, ,\\
  &  p_{123}p_{456}-p_{124}p_{356}+p_{125}p_{346}-p_{126}p_{345}=0\\
  &p_{234}A_1-p_{134}A_2+p_{124}A_3-p_{123}A_4=0\\
  &  p_{123}^2p_{456}+\det{\begin{pmatrix} p_{234}&p_{235}&p_{236}\\ p_{134}&p_{135}&p_{136} \\ p_{124}&p_{125}&p_{126} \end{pmatrix}}=0\, .
\end{align*} 
\label{plu}
\end{lemma}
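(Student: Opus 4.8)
The plan is to recognize all four identities as instances of classical Plücker-type relations among maximal minors, and to prove them by reducing to the generic case. Since $p_{ijk}=\det(A_i,A_j,A_k)$ is a polynomial identity in the entries of $A$, it suffices to verify each identity when the relevant columns are in general position; equivalently, one may work over the field of rational functions in the entries and use multilinearity and alternation of the determinant freely.

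For the first identity, I would use the standard trick for the three-term Grassmann--Plücker relation: fix the two columns $A_2,A_3$ (the common indices of $p_{123},p_{134},p_{135}$ — or rather argue via the Laplace/exterior-algebra identity $A_1\wedge A_2\wedge A_3\wedge A_4\wedge A_5$, which vanishes in $\mathbb{R}^3$ since any five vectors in a $3$-space are dependent). Concretely, expanding $0 = (A_1\wedge A_2)\wedge(A_3\wedge A_4\wedge A_5)$ using the fact that $\wedge^3\mathbb{R}^3$ is one-dimensional and $\wedge^2\mathbb{R}^3\cong\mathbb{R}^3$ via the cross product gives $(A_1\times A_2)\cdot(\text{something})$; alternatively write $A_4 = \alpha A_1+\beta A_2+\gamma A_3$ (valid generically, since $A_1,A_2,A_3$ span $\mathbb{R}^3$) and $A_5=\alpha' A_1+\beta' A_2+\gamma' A_3$, substitute into each $p_{ijk}$, and collect terms — the coefficients of $p_{123}$ on each side match by direct bookkeeping. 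The third identity is the most transparent: again write $A_4=\alpha A_1+\beta A_2+\gamma A_3$ generically, so that by Cramer's rule $\alpha = p_{423}/p_{123}=-p_{234}/p_{123}$ wait, more carefully $\alpha = p_{4 2 3}/p_{1 2 3}$ etc., and then $p_{234}A_1-p_{134}A_2+p_{124}A_3-p_{123}A_4$ is exactly $p_{123}(\alpha A_1+\beta A_2+\gamma A_3 - A_4)=0$ after matching the cofactor expressions $\alpha p_{123}=p_{234}$, $-\beta p_{123}=-p_{134}$ hmm, sign-checking via Cramer, $\gamma p_{123}=p_{124}$.

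The second and fourth identities are genuine Plücker relations for $\wedge^3$ of a space of dimension $3$ (so all $3\times3$ minors of a $3\times 6$ matrix). The second, $p_{123}p_{456}-p_{124}p_{356}+p_{125}p_{346}-p_{126}p_{345}=0$, is the quadratic Grassmann--Plücker relation obtained by "shuffling" the index $\{1\}$ past $\{4,5,6\}$ in $p_{1\,2\,3}\cdot p_{4\,5\,6}$; I would prove it the same way, expanding $A_1,\dots,A_6$ in terms of the generic basis $A_1,A_2,A_3$ wait that won't separate cleanly — better to use the exterior-algebra normal form: since $\dim\mathbb{R}^3=3$, $\wedge^4=0$, and the relation is the coefficient of a suitable basis element in $0=(\text{a }\wedge^4\text{ term})\wedge(\text{a }\wedge^2\text{ term})$, a contraction identity. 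For the fourth identity, expand the $3\times3$ determinant along its first row: it equals $p_{234}(p_{135}p_{126}-p_{136}p_{125})-p_{235}(p_{134}p_{126}-p_{136}p_{124})+p_{236}(p_{134}p_{125}-p_{135}p_{124})$, then apply the first identity (suitably re-indexed) to each of the three parenthesized $2\times2$ combinations to rewrite them in terms of products containing $p_{123}$, and collect — the result should telescope to $-p_{123}^2 p_{456}$ using the second identity once more. The main obstacle is purely bookkeeping: getting all the signs right in the re-indexed applications of the first and second identities, and making sure the generic-position reduction is legitimate (it is, because both sides are polynomials and a polynomial vanishing on a Zariski-dense set vanishes identically). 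I would present the generic argument cleanly for identities one and three, and for two and four either cite the standard Grassmann--Plücker relations or verify them by a short symbolic computation, noting that in any case the verification is a finite polynomial check.
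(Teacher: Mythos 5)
Your proposal is correct in substance. Note first that the paper does not actually prove this lemma: it is stated without proof, accompanied only by the remark that the first two identities are instances of the Pl\"ucker relations, so your sketch supplies more than the source does. Your plan --- reduce to the generic case where $A_1,A_2,A_3$ form a basis (legitimate by Zariski density, since all four identities are polynomial in the entries of $A$), obtain the third identity from Cramer's rule, and obtain the first two as Grassmann--Pl\"ucker relations --- is sound, and your expansion-plus-telescoping route to the fourth identity does close up: each of the three $2\times2$ combinations reduces via a re-indexed three-term relation to $-p_{123}p_{1jk}$, and the remaining sum $p_{234}p_{156}-p_{235}p_{146}+p_{236}p_{145}=p_{123}p_{456}$ is the four-term relation with the pair $\{2,3\}$ held fixed rather than $\{1,2\}$, so you need the general form of identity two, not the literal instance stated. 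A cleaner way to get the fourth identity, for which you already have all the pieces, is to observe that by your Cramer computation the coordinate matrix $C$ of $(A_4,A_5,A_6)$ in the basis $(A_1,A_2,A_3)$ has columns $\tfrac{1}{p_{123}}(p_{23j},\,-p_{13j},\,p_{12j})^T$, whence $\det C=-\det(M)/p_{123}^{3}$ with $M$ the displayed matrix of minors, while $p_{456}=p_{123}\det C$; this yields $p_{123}^2p_{456}+\det M=0$ in one line. The only real defect of your write-up is that the sign bookkeeping you repeatedly defer (the ``wait/hmm'' checks) must actually be carried out in a final version --- they do all work out --- and the exterior-algebra digression for identity one should either be completed or dropped in favour of the substitution argument you also describe.
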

The first two equations in  Lemma \ref{plu} are examples  of Pl\"ucker relations.

\section{Euler equations}
Let us consider the incompressible Euler equations
\begin{equation}
    \begin{aligned}
    u_t+u\nabla u+\nabla p=0\\
    \nabla\cdot u=0
\end{aligned}
\label{euler}
\end{equation}
in some domain $\Omega\subset\mathbb{R}^n$. This is called the Eulerian description of the flow and the coordinates of $\Omega$, denoted $x$, are the Eulerian coordinates. Below we will consider another description which is almost the Lagrangian description of the flow. 

Let $D\subset\mathbb{R}^n$ be another domain and let us consider a family of diffeomorphisms $\varphi^t\,:\,D\to \Omega_t=\varphi^t(D)$. The coordinates in $D$ are denoted by  $z$. We can also define 
\[
  \varphi\,:\,D\times \mathbb{R}\to \mathbb{R}^n\quad,\quad
  \varphi(z,t)=\varphi^t(z)\, .
\]
Now given such $\varphi$ we can define the associated vector field $u$ by the formula
\begin{equation}
\frac{\partial}{\partial t} \fii(z,t)=u(\fii(z,t),t)\,.
\label{siirto}    
\end{equation}
Our goal is to find maps $\varphi$ such that $u$ solves the Euler equations in the three dimensional case. 

To express the incompressibility condition $\nabla\cdot u=0$ we introduce the volume forms $\mathsf{vol}_x=dx^1\wedge dx^2\wedge dx^3$ and $\mathsf{vol}_z=dz^1\wedge dz^2\wedge dz^3$. Then $\varphi^\ast \mathsf{vol}_x=\alpha\,\mathsf{vol}_z$ for some $\alpha$. In the Lagrangian case we should require that $\alpha=1$ but here we only require that it is independent of time and nonzero in $D$.

 It will be convenient to interpret the first equation of \eqref{euler}  as well as the equation \eqref{siirto} in terms of covectors instead of vectors. Then differentiating the equation \eqref{siirto} we obtain
 \[
    \varphi''+dp=0\,.
 \]
Then the condition of existence of local solutions is simply $d \varphi''=0$. The exterior derivative cannot be directly computed in $x$ coordinates so we have to pull back this condition to $z$ coordinates. To this end let us introduce
\[
    h=d\, \varphi^\ast\,\varphi'=
    d\varphi_1'\wedge d\varphi_1+d\varphi_2'\wedge d\varphi_2+
  d\varphi_3'\wedge d\varphi_3\,.
\]
Note that if we interpret the vorticity $\zeta=du$ as a two-form then $h$ is the vorticity in $z$ coordinates. A fundamental fact is that denoting by $\mathcal{L}$ the Lie derivative we have \cite{besfri,taylor3}
\[
  \zeta_t+\mathcal{L}_u \zeta=0\,.
\]
We will often write for simplicity
\[
 h=h^3dz^1\wedge dz^2-h^2dz^1\wedge dz^3+
 h^1 dz^2\wedge dz^3=\big(h^1,h^2,h^3\big)\ .
\]
The components $h^j$ are classically known as \emph{Cauchy invariants} \cite{besfri}, and as the name suggests they should not depend on time. Accordingly we see that  $d \varphi''=0$ is equivalent to $h'=0$. 
 Let us summarize this discussion with
\begin{lemma}
 $\fii$ provides a solution to Euler equations if and only if both $h$ and $\alpha$ are independent of time.
 \label{kriteeri}
\end{lemma}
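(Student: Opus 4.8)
The plan is to split the system \eqref{euler} into its two equations and match the divergence-free condition with the time-independence of $\alpha$ and the momentum equation with the time-independence of $h$, the latter through the intermediate condition $d\varphi''=0$ already isolated in the text. Throughout I identify vectors with covectors via the Euclidean metric on $\Omega$, as in the paper.

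First, incompressibility. Since each $\varphi^t$ is a diffeomorphism, $\alpha$ never vanishes, so for this half it suffices to show that $\nabla\cdot u=0$ on every $\Omega_t$ is equivalent to $\alpha'=0$. I would apply the transport identity $\tfrac{d}{dt}\big((\varphi^t)^\ast\omega\big)=(\varphi^t)^\ast\big(\mathcal{L}_u\omega\big)$, valid because $\partial_t\varphi^t=u\circ\varphi^t$ by \eqref{siirto} and $\omega=\mathsf{vol}_x$ is independent of time. Using $\mathcal{L}_u\mathsf{vol}_x=(\nabla\cdot u)\,\mathsf{vol}_x$ and $\varphi^\ast\mathsf{vol}_x=\alpha\,\mathsf{vol}_z$ this gives $\alpha'\,\mathsf{vol}_z=\big((\nabla\cdot u)\circ\varphi^t\big)\,\alpha\,\mathsf{vol}_z$. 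Because $\alpha$ is nowhere zero and $\varphi^t(D)=\Omega_t$, the left-hand side vanishes for all $t$ exactly when $\nabla\cdot u=0$ on each $\Omega_t$.

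Second, the momentum equation. Differentiating \eqref{siirto} in $t$ and substituting \eqref{siirto} back in to eliminate $\varphi'$ gives, componentwise, $\varphi_j''=\big(u_t+u\nabla u\big)_j\circ\varphi$, so the $1$-form $\sum_j\varphi_j''\,d\varphi_j$ in the $z$ variables is precisely $\varphi^\ast$ of the momentum covector $\sum_j(u_t+u\nabla u)_j\,dx^j$. Since $\varphi^t$ is a diffeomorphism for each fixed $t$, the equation $u_t+u\nabla u+\nabla p=0$ holds for some scalar $p$ if and only if $\sum_j\varphi_j''\,d\varphi_j$ equals $-d(\varphi^\ast p)=-\varphi^\ast(dp)$, i.e. if and only if $\sum_j\varphi_j''\,d\varphi_j$ is exact; conversely, from a potential $g$ on $D\times\mathbb{R}$ one recovers a pressure by $p(\cdot,t)=g\big((\varphi^t)^{-1}(\cdot),t\big)$. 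By the Poincaré lemma this exactness is equivalent, locally (or globally on a simply connected $D$), to $d\big(\sum_j\varphi_j''\,d\varphi_j\big)=0$, which is the condition $d\varphi''=0$ recorded in the text. The last step is the identity $d\big(\sum_j\varphi_j''\,d\varphi_j\big)=h'$: the spatial exterior derivative $d$ commutes with $\partial_t$ and $dd\varphi_j=0$, so differentiating $h=\sum_j d\varphi_j'\wedge d\varphi_j$ in $t$ yields $h'=\sum_j d\varphi_j''\wedge d\varphi_j+\sum_j d\varphi_j'\wedge d\varphi_j'$, and the second sum vanishes term by term (a $1$-form wedged with itself), leaving $h'=\sum_j d\varphi_j''\wedge d\varphi_j=d\big(\sum_j\varphi_j''\,d\varphi_j\big)$. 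Hence the momentum equation is solvable for $p$ iff $h'=0$.

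Combining the two parts, the field $u$ defined by \eqref{siirto} solves \eqref{euler} for some pressure iff $\alpha'=0$ and $h'=0$, which is the assertion. I expect the only genuinely delicate point to be the passage from ``closed'' to ``exact'' for $\sum_j\varphi_j''\,d\varphi_j$: this is exactly why the statement is to be read as concerning local solvability (equivalently, one restricts to a simply connected $D$), and why recovering a bona fide pressure on $\Omega_t$ needs the invertibility of $\varphi^t$. Everything else is routine bookkeeping with pullbacks and the metric identification of vectors and covectors.
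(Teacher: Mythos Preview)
Your argument is correct and follows the same route as the paper: the lemma is merely a summary of the preceding discussion, and you have fleshed out exactly those steps (the transport identity for $\alpha$, the pullback identification $\sum_j\varphi_j''\,d\varphi_j=\varphi^\ast(u_t+u\nabla u)^\flat$, and the computation $h'=d\big(\sum_j\varphi_j''\,d\varphi_j\big)$) that the text only states. Your remark about the Poincar\'e lemma and the need for local/simply connected $D$ is also precisely the caveat the paper leaves implicit.
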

Note that here is an essential difference to the 2D case: the condition for $h$ is related to two-forms while the condition for $\alpha$ is formulated in terms of three-forms. In 2D case both conditions are evidently formulated with two-forms. 

It is also interesting to express the vorticity $\zeta$ as a vector in $x$ coordinates in terms of $h$. To this end we express first $h$ as a vector using the Hodge star and raising the index; in components this gives
\[
  \tilde h=\sharp \ \ast h=\tilde h^1\partial_{z_1}+\tilde h^2\partial_{z_2}+
  \tilde h^3\partial_{z_3}
  =\big(\tilde h^1,\tilde h^2,\tilde h^3\big)=
  \frac{1}{\alpha}\,\big(h^1,h^2,h^3\big)\ .
\]
From this we get \emph{Cauchy's vorticity formula}  \cite{besfri}:
\[
   \zeta=  d\varphi\,\tilde h\, .
\]
This can be interpreted either as a pushforward or how the components of vector field change under the change of coordinates.

Let us then consider the maps of the following  form
\begin{equation}
    \varphi(z,t)=A(t)v(z)\, ,
\label{yrite}
\end{equation}
where $A(t)\in\mathbb{R}^{3\times m}$, $v\,:\, D\to\mathbb{R}^m$ and $D\subset\mathbb{R}^3$ is some  coordinate domain. Our next task is to compute convenient formulas for $h$ and $\alpha$ in this case.

Since all the analysis is local, the precise nature of $D$ is not important in our context. We will try to find maps $\varphi$ such that the corresponding vector field $u$ defined by the formula \eqref{siirto} is a solution to the Euler equations. Hence we should find $A$ and $v$ such that the conditions in Lemma \ref{kriteeri} are satisfied.
 Since we want that $\det(d\varphi)\ne0$, this necessarily implies that $\mathsf{rank}(A)=\mathsf{rank}(dv)=3$. Of course we can always assume that $\det(d\varphi)>0$.

Let us denote the coordinates of the intermediate space $\mathbb{R}^m$ by $y$. Then we can interpret $A$ as a map $\mathbb{R}^m\to\mathbb{R}^3$ and we can write
\[
  A^\ast\mathsf{vol}_x=\sum _{1\leq i<j<k\leq m} p_{ijk}dy^i\wedge dy^j\wedge dy^k=
  \sum _{1\leq i<j<k\leq m} \det (A_i,A_j,A_k)dy^i\wedge dy^j\wedge dy^k\ ,
\]
where $A_i$ denote the columns of $A$. Similarly we set
\[
   v^\ast dy^i\wedge dy^j\wedge dy^k=
   g_{ijk} \mathsf{vol}_z=\det (\nabla v^i,\nabla v^j,\nabla v^k)\mathsf{vol}_z\ .
\]
But from this we immediately obtain 
\begin{equation}
      \alpha=\sum _{1\le  i<j<k\le m} p_{ijk}g_{ijk}\ .
\label{a-rajehto}
\end{equation}
This is known as the Cauchy--Binet formula. Then let us introduce
\[
   \beta= \sum _{1\leq i<j\leq m} Q_{ij} dy^i\wedge dy^j\quad
  \mathrm{where}\quad
  Q_{ij}=\langle A_i',A_j\rangle - \langle A_j',A_i\rangle\ .
\]
Then putting $G_{ij}=v^\ast dy^i\wedge dy^j$ we can write
\begin{equation}
  h
  =v^\ast\beta= \sum _{1\leq i<j\leq m} Q_{ij}G_{ij}
  = \sum _{1\leq i<j\leq m} Q_{ij} dv^i\wedge dv^j\ .
\label{h-rajehto}
\end{equation}
To solve the Euler equations we need \eqref{a-rajehto} and \eqref{h-rajehto} to be constant in time by Lemma \ref{kriteeri}. The crucial observation is that when we fix any $t$ in \eqref{a-rajehto} and \eqref{h-rajehto}, we see that for the time derivatives of $\alpha $ and $h$ to vanish the spatial component only needs to satisfy constraints of the forms 
\begin{equation}
\sum \gamma_{ijk}g_{ijk}=0\textrm{ and }\sum \gamma _{ij}G_{ij}=0\, , 
\label{yl-rajehto}
\end{equation}
where $\gamma_{ijk}$ and $\gamma _{ij}$ are constants. Furthermore, when we substitute these equations back to \eqref{a-rajehto} and \eqref{h-rajehto}, we obtain the constraints that $p_{ijk}$ and $Q_{ij}$ have to satisfy.

There is the following curious connection between $p_{ijk}$ and $Q_{ij}$. 
\begin{lemma}
\[
  \Omega= A^\ast\mathsf{vol}_x \wedge \beta=0\ .
\]
\end{lemma}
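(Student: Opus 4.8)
The plan is to exhibit both factors, $A^\ast\mathsf{vol}_x$ and $\beta$, as wedge products built out of the \emph{same} triple of linear $1$-forms on $\mathbb{R}^m$, so that the product $\Omega$ contains a repeated factor and therefore vanishes for purely formal reasons. (Note that pulling $\Omega$ back to $D$ gives $\alpha\,\mathsf{vol}_z\wedge h$, a $5$-form on $\mathbb{R}^3$, which is trivially zero; the content of the lemma is that the identity already holds upstairs on $\mathbb{R}^m$.)

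First I would introduce, for $l=1,2,3$, the linear functions $\ell_l,\ell_l'\colon\mathbb{R}^m\to\mathbb{R}$ given by the $l$-th components of $Ay$ and $A'y$, i.e.\ $\ell_l(y)=\sum_i A_{li}y^i$ and $\ell_l'(y)=\sum_i A_{li}'y^i$, so that $d\ell_l=A^\ast dx^l=\sum_i A_{li}\,dy^i$ and $d\ell_l'=\sum_i A_{li}'\,dy^i$. Then I would verify the two identities
\[
  A^\ast\mathsf{vol}_x=d\ell_1\wedge d\ell_2\wedge d\ell_3
  \qquad\text{and}\qquad
  \beta=\sum_{l=1}^3 d\ell_l'\wedge d\ell_l .
\]
The first holds because $A^\ast$ is a homomorphism of exterior algebras, and expanding $d\ell_1\wedge d\ell_2\wedge d\ell_3$ in the $dy$-basis reproduces exactly $\sum_{i<j<k}\det(A_i,A_j,A_k)\,dy^i\wedge dy^j\wedge dy^k$. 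For the second, expanding gives $\sum_{l}d\ell_l'\wedge d\ell_l=\sum_{i,j}\langle A_i',A_j\rangle\,dy^i\wedge dy^j$, and pairing the $(i,j)$ and $(j,i)$ terms and using antisymmetry of $dy^i\wedge dy^j$ converts this into $\sum_{i<j}Q_{ij}\,dy^i\wedge dy^j=\beta$; this is essentially the same computation that produced \eqref{h-rajehto}.

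With these in hand the conclusion is forced: for each $l$ the $4$-form $\mathsf{vol}_x\wedge dx^l$ vanishes on the $3$-dimensional space $\mathbb{R}^3$ for dimensional reasons, so $A^\ast\mathsf{vol}_x\wedge d\ell_l=A^\ast(\mathsf{vol}_x\wedge dx^l)=0$, and hence
\[
  \Omega=\sum_{l=1}^3 A^\ast\mathsf{vol}_x\wedge d\ell_l'\wedge d\ell_l
  =-\sum_{l=1}^3\bigl(A^\ast\mathsf{vol}_x\wedge d\ell_l\bigr)\wedge d\ell_l'=0 .
\]

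I do not expect a genuine obstacle here; the only point requiring insight is the observation that $\beta$ introduces no $1$-forms other than the $d\ell_l$ and $d\ell_l'$, which is precisely what makes the repeated-factor argument work. A brute-force alternative would be to expand $\Omega$ in the basis of $5$-forms $dy^a\wedge\cdots\wedge dy^e$ ($a<b<c<d<e$) and check that each coefficient — an alternating sum of ten terms $p_{\bullet\bullet\bullet}Q_{\bullet\bullet}$ — vanishes identically, a Laplace/Pl\"ucker-type identity in the spirit of Lemma \ref{plu}; that route is correct but longer and obscures why the statement is true, so I would fall back to it only if the clean argument hit some unforeseen snag.
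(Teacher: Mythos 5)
Your proof is correct. The vanishing mechanism is the same one the paper uses --- a repeated factor --- but your route to it is genuinely different in form: the paper works coordinatewise, identifying (for $m=5$) the single coefficient $\Omega_{12345}$ with $\sum_{j=1}^{3}\det(\hat A_1,\hat A_2,\hat A_3,\hat A_j',\hat A_j)$, where $\hat A_i$ are the rows of $A$, and observing that each determinant vanishes because the column $\hat A_j$ appears twice; for $m>5$ it then appeals to ``similar formulas for each group of five indices.'' Your factorizations $A^\ast\mathsf{vol}_x=d\ell_1\wedge d\ell_2\wedge d\ell_3$ and $\beta=\sum_{l}d\ell_l'\wedge d\ell_l$ make the repeated factor visible before any coefficients are extracted, so the argument is coordinate-free, uniform in $m$, and dispenses with the unproved identification of $\Omega_{12345}$ with that alternating sum of determinants (your ``brute-force alternative'' is in fact exactly the paper's proof). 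Both verifications you rely on --- that $A^\ast$ is an algebra homomorphism and that symmetrizing $\sum_{i,j}\langle A_i',A_j\rangle\,dy^i\wedge dy^j$ yields $\sum_{i<j}Q_{ij}\,dy^i\wedge dy^j$ --- are sound, and the final sign manipulation is harmless since every summand already vanishes. The one small detour is invoking the $4$-form $\mathsf{vol}_x\wedge dx^l$ on $\mathbb{R}^3$: you could equally just note that $d\ell_1\wedge d\ell_2\wedge d\ell_3\wedge d\ell_l$ repeats $d\ell_l$ for $l\le 3$, which is precisely the repeated-column observation in the paper's determinants.
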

\begin{proof}
Let first $m=5$ so that $\Omega=\Omega_{12345}dy^1\wedge dy^2\wedge dy^3\wedge dy^4\wedge dy^5$. Let us denote the rows of $A$ by $\hat A_i$. Then computing we see that
\[
  \Omega_{12345}=
  \sum_{j=1}^3
  \det(\hat A_1,\hat A_2,\hat A_3,\hat A_j',\hat A_j)=0\ .
\]
If $m>5$ then similar formulas are valid for each group of five indices.
\end{proof}

We have now our formulas for $\alpha$ and $h$ but we can still simplify them. First it is important to remember that the domain $D$ is simply some parameter domain which has no physical significance. Hence one can look for the ''simplest'' possible parameter domain. For future reference let us record this observation as 
\begin{lemma} Let  $\psi\,:\,\hat D\to D$ be an arbitrary diffeomorphism  and let $\tilde\varphi^t=\varphi^t\circ\psi$. Then $\tilde\varphi$  provides solutions to the Euler equations via formula \eqref{siirto} if and only if $\varphi$ does.
\label{koordi}
\end{lemma}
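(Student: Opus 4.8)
The plan is to reduce everything to Lemma \ref{kriteeri} by tracking how $h$ and $\alpha$ behave under precomposition with a time-independent diffeomorphism. Actually, the quickest route is even more direct: I would first observe that the vector field attached to $\tilde\varphi$ by \eqref{siirto} is \emph{literally the same} as the one attached to $\varphi$. Writing $\hat z$ for the coordinates on $\hat D$ and using $\tilde\varphi(\hat z,t)=\varphi(\psi(\hat z),t)$ with $\psi$ independent of $t$, differentiation in $t$ gives
\[
  \partial_t\tilde\varphi(\hat z,t)=\partial_t\varphi(\psi(\hat z),t)=u\big(\varphi(\psi(\hat z),t),t\big)=u\big(\tilde\varphi(\hat z,t),t\big)\,,
\]
and moreover $\tilde\varphi^t(\hat D)=\varphi^t(\psi(\hat D))=\varphi^t(D)=\Omega_t$, so the two constructions yield the same $u$ on the same time-dependent domain. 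Hence $u$ solves \eqref{euler} for $\tilde\varphi$ if and only if it does for $\varphi$, which is the assertion.

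To make the argument self-contained at the level of Lemma \ref{kriteeri} (and as a consistency check), I would also compute the transformed invariants. Pullback is functorial, and since $\psi$ carries no time dependence the operator $\psi^\ast$ commutes with $\partial_t$. From $\varphi^\ast\mathsf{vol}_x=\alpha\,\mathsf{vol}_z$ we get
\[
  \tilde\varphi^\ast\mathsf{vol}_x=\psi^\ast(\varphi^\ast\mathsf{vol}_x)=\psi^\ast(\alpha\,\mathsf{vol}_z)=(\alpha\circ\psi)\,\det(d\psi)\,\mathsf{vol}_{\hat z}\,,
\]
so $\tilde\alpha=(\alpha\circ\psi)\det(d\psi)$; and since $\tilde\varphi_j=\varphi_j\circ\psi$ and $\tilde\varphi_j'=\varphi_j'\circ\psi$ at each fixed $t$, we have $\tilde h=\sum_j d(\varphi_j'\circ\psi)\wedge d(\varphi_j\circ\psi)=\psi^\ast h$. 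Because $\psi$ is a diffeomorphism, $\psi^\ast$ is injective on forms and $\det(d\psi)$ is a nowhere-vanishing, time-independent function; therefore $\partial_t\tilde h=\psi^\ast(\partial_t h)$ and $\partial_t\tilde\alpha=(\partial_t\alpha\circ\psi)\det(d\psi)$ vanish if and only if $\partial_t h=0$ and $\partial_t\alpha=0$. Lemma \ref{kriteeri} then gives the equivalence.

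There is essentially no obstacle here; the only points deserving care are (i) that $\partial_t$ and $\psi^\ast$ commute, which is precisely where the hypothesis that $\psi$ is independent of time is used, and (ii) that a diffeomorphism induces an injective pullback on differential forms, so that vanishing of $\psi^\ast(\partial_t h)$ forces vanishing of $\partial_t h$ (and likewise the nonvanishing of $\det(d\psi)$ handles $\alpha$). Both are standard, so the proof will be short.
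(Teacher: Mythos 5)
Your proposal is correct, and in fact it contains two complete arguments. The second one (tracking $\tilde h=\psi^\ast h$ and $\tilde\alpha=(\alpha\circ\psi)\det(d\psi)$ and invoking Lemma \ref{kriteeri}) is exactly what the paper means by its one-line proof ``this follows from standard properties of pullback'': time-independence of $h$ and $\alpha$ is preserved in both directions because $\psi^\ast$ is a time-independent isomorphism on forms and $\det(d\psi)$ is nowhere zero. Your first argument is a genuinely more elementary route that the paper does not spell out: since $\psi$ does not depend on $t$, the velocity field produced by \eqref{siirto} from $\tilde\varphi$ is \emph{literally} the same field $u$ on the same moving domain $\Omega_t=\varphi^t(D)=\tilde\varphi^t(\hat D)$, so the question of whether $u$ satisfies \eqref{euler} (including $\nabla\cdot u=0$) is verbatim the same question for both maps, with no need to pass through the Cauchy invariants at all. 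What the direct argument buys is brevity and independence from Lemma \ref{kriteeri}; what the pullback computation buys is the explicit transformation law for $\alpha$ and $h$, which is what the paper actually uses downstream when it normalizes $v$ to the form \eqref{v-z}. Both are sound; either alone would suffice.
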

\begin{proof}
This follows from standard properties of pullback.
\end{proof}

 In particular, if $g_{123}\ne0$ then we can suppose that $v$ is of the form
 \begin{equation}
     v(z)=\big(z_1,z_2,z_3,f^1(z),\dots,f^{m-3}(z)\big)\, .
\label{v-z}
 \end{equation}
In each case in which we find the general solution to $v$, the indices of $v$ are chosen such that $g_{123}=0$ would imply $\det(d\fii)=0$. Thus we will always write $v$ as in \eqref{v-z}.

We can also write the time component in a more convenient way; recall that by QR decomposition we can always write $A=RB$, where $R\in \mathbb{SO}(3)$ and some $3\times 3$ submatrix $(B_i,B_j,B_k)$ of $B$ is an upper triangular matrix.

To see the effect of the QR decomposition on $h$ we first compute
\[
  \langle A_i',A_j\rangle= 
  \langle R^T R'B_i,B_j\rangle+\langle B_i',B_j\rangle \ .
\]
Using the Euler parameters to represent the rotations we note that
\[
  (R')^TR=2H_{a'}H_a^T=-2H_a H_{a'}^T\,.
\]
This implies that 
\begin{equation}
   Q_{ij}= 4 \langle H_a a', B_i\times B_j\rangle+ 
   \langle B_i',B_j\rangle-\langle B_i,B_j'\rangle\ .
    \label{3x3-1a}
\end{equation}
But now we can in a sense eliminate $a$. Let us set $w=4\,H_a a' $
and $\hat w=(0,w_1,w_2,w_3)$. Then we can write
\[
 4\,K_aa'=\hat w\quad\Longrightarrow\quad
 a'=\tfrac{1}{4}\, K_a^T \hat w=\tfrac{1}{4}\,\tilde K_{\hat w}^T a\ .
\]
Note that $\tilde K_{\hat w}$ is antisymmetric which guarantees that $|a|$ stays constant. $\tilde K_{\hat w}$ is also a Hamiltonian matrix 
so that $a'=\tfrac{1}{4}\,\tilde K_{\hat w}^T a$ is in fact a Hamiltonian system with respect to standard symplectic form
\[
\omega=da_0\wedge da_1+da_2\wedge da_3\ .
\]
 We can use $w$ as a kind of extra  variables in our equations and we can thus write
\begin{equation}
  Q_{ij}=  \langle w, B_i\times B_j\rangle+ 
   \langle B_i',B_j\rangle-\langle B_i,B_j'\rangle\ .
\label{Q-kaava}    
\end{equation}
In what follows we will often have conditions where $p_{ijk}$ or $Q_{ij}$ is some constant; in such cases these constants are always denoted by $p_{ijk}=e_{ijk}$ and $Q_{ij}=c_{ij}$.

Note that the presentation of $\fii=A(t)v(z)$ is not unique.
\begin{lemma}
If $\fii=A(t)v(z)$ is a solution to the Euler equations and $\tilde{v}(z)=H^{-1}v$, where $H$ is a regular $m\times m$ matrix, then there is a matrix $\tilde{A}(t)$, namely $\tilde{A}=AH$, such that $\fii = Av=\tilde{A}\tilde{v}$.
\label{redusointi}
\end{lemma}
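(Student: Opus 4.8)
The plan is to observe that this is essentially a purely algebraic identity: the proof amounts to unwinding the definition of the ansatz \eqref{yrite} and then noting that the reparametrized pair $(\tilde A,\tilde v)$ stays within the admissible class.

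First I would use that $H$ is regular, so $\tilde v=H^{-1}v$ is equivalent to $v=H\tilde v$. Substituting this into \eqref{yrite} gives
\[
  \fii(z,t)=A(t)v(z)=A(t)\bigl(H\tilde v(z)\bigr)=\bigl(A(t)H\bigr)\tilde v(z)\ ,
\]
so the choice $\tilde A(t)=A(t)H$ indeed yields $\fii=Av=\tilde A\tilde v$ as maps $D\times\mathbb{R}\to\mathbb{R}^3$. Since $\fii$ is literally the same map, the associated vector field $u$ defined by \eqref{siirto} is unchanged, hence it still solves the Euler equations; equivalently, the forms $h$ and $\alpha$ of Lemma \ref{kriteeri} do not see the factorization and remain time independent. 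So nothing about the Euler equations themselves needs reverification.

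The only point deserving a remark — and the closest thing here to an obstacle — is checking that $(\tilde A,\tilde v)$ is again of the admissible form, i.e. that the rank conditions $\mathsf{rank}(\tilde A)=\mathsf{rank}(d\tilde v)=3$ are inherited. This is immediate from the invertibility of $H$: $\tilde A=AH$ gives $\mathsf{rank}(\tilde A)=\mathsf{rank}(A)=3$, while $d\tilde v=H^{-1}dv$ gives $\mathsf{rank}(d\tilde v)=\mathsf{rank}(dv)=3$, and in particular $\det(d\fii)$ is unchanged. I expect the author's proof to be exactly this one-line substitution, possibly without even spelling out the rank bookkeeping.
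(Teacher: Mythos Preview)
Your proof is correct and matches the paper's treatment: the paper does not even give a proof, remarking only that the statement is ``really trivial,'' which is exactly the one-line substitution $Av=(AH)(H^{-1}v)$ you wrote out. Your added rank bookkeeping is a harmless bonus the authors omit.
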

Although the statement of the Lemma is really trivial, it is  actually quite useful in concrete computations and we will use it repeatedly to simplify the problems below without losing any generality. 

Then we note the following simple fact which allows us to discard uninteresting cases.
\begin{lemma}
Suppose that in the representation \eqref{yrite} 
\begin{enumerate}
    \item $A_j$ are linearly dependent over $\mathbb{R}$ or 
    \item $dv^j$ are linearly dependent over $\mathbb{R}$.
\end{enumerate}
Then we can write $\varphi=\hat A\hat v$ where $\hat A(t)\in\mathbb{R}^{3\times k}$ and $k<m$.
\label{redusointi2}
\end{lemma}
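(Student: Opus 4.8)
The plan is to exploit the duality between the two hypotheses. Condition~(1) says that the $m$ columns $A_1,\dots,A_m$, viewed as vectors in the $\mathbb{R}$-vector space of smooth curves $\mathbb{R}\to\mathbb{R}^3$, satisfy a nontrivial constant-coefficient relation; condition~(2) says the same for the $m$ one-forms $dv^1,\dots,dv^m$ in $\Omega^1(D)$. In either case one such relation should let us delete a single ``slot'' from the bilinear expression $\varphi=\sum_{j=1}^m A_j v^j$, and the natural bookkeeping device is Lemma~\ref{redusointi}: replacing $(A,v)$ by $(AH,H^{-1}v)$ for a regular $m\times m$ matrix $H$ leaves $\varphi$ unchanged, so we may choose $H$ that pushes the dependence into the last coordinate. (Permuting columns of $A$, together with the corresponding relabeling of the intermediate coordinates, is itself a special case of this.)

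For case~(1) I would pick $\lambda=(\lambda_1,\dots,\lambda_m)\neq 0$ with $\sum_j\lambda_j A_j\equiv 0$, relabel so that $\lambda_m\neq 0$, and rescale so that $\lambda_m=1$. Taking $H$ to be the matrix with columns $e_1,\dots,e_{m-1},\lambda$ (regular precisely because $\lambda_m=1$), the last column of $AH$ is $A\lambda=\sum_j\lambda_j A_j=0$; hence with $\hat A$ the first $m-1$ columns of $AH$ and $\hat v$ the first $m-1$ components of $H^{-1}v$ one gets $\varphi=(AH)(H^{-1}v)=\hat A\hat v$ with $k=m-1$. Concretely this is $\hat A=(A_1,\dots,A_{m-1})$ and $\hat v^j=v^j-\lambda_j v^m$. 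I expect this case to be entirely routine.

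For case~(2) I would pick $\mu\neq 0$ with $\sum_j\mu_j\,dv^j=0$, so that $\sum_j\mu_j v^j\equiv c$ for some constant $c$; relabeling and rescaling to get $\mu_m=1$ and substituting $v^m=c-\sum_{j<m}\mu_j v^j$ into $\varphi=\sum_j A_j v^j$, collecting terms yields
\[
  \varphi=\sum_{j=1}^{m-1}\bigl(A_j-\mu_j A_m\bigr)v^j+c\,A_m(t)\ .
\]
If $c=0$ this is already of the required form with $\hat A_j=A_j-\mu_j A_m$, $\hat v^j=v^j$, $k=m-1$. The one point that needs genuine care — and the only obstacle I anticipate — is the residual term $c\,A_m(t)$ when $c\neq 0$: since $A_m$ is in general not a constant-coefficient combination of the $A_j-\mu_j A_m$ (that would be exactly hypothesis~(1)), it cannot be absorbed into the sum, and keeping it as an extra constant component would not lower the slot count. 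The way around it is to observe that $c\,A_m(t)$ is a purely time-dependent translation of $\varphi$, and from the defining formulas one checks that adding any $z$-independent curve to $\varphi$ alters neither $d\varphi$ (hence not $\alpha$) nor $h=\sum_i d\varphi_i'\wedge d\varphi_i$; so by Lemma~\ref{kriteeri} it is immaterial for solving the Euler equations and may be discarded (equivalently, it is carried as a trivial constant component whose differential vanishes and which plays no role in the sequel). With that understood, $\varphi$ reduces in all cases to the stated form with $k=m-1<m$, and apart from this translation subtlety everything is just the elementary linear algebra of removing one dependent generator from a finite-dimensional span.
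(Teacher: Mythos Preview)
Your argument is correct. The paper itself does not give a proof here but simply refers to \cite{toju}, so there is no in-paper argument to compare against; your treatment via Lemma~\ref{redusointi} is exactly the intended mechanism, and the case split is the natural one.

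You are also right to flag the residual term $c\,A_m(t)$ in case~(2) as the only genuine point of care: as literally stated, the identity $\varphi=\hat A\hat v$ with $k<m$ need not hold when $c\neq 0$, and your resolution---that a purely time-dependent translation leaves both $\alpha$ and $h$ unchanged and is therefore irrelevant for Lemma~\ref{kriteeri}---is precisely how the lemma is used throughout the paper (to ``discard uninteresting cases''). One could phrase the same point without invoking the Euler criterion by augmenting $v$ with an extra constant component equal to $1$ from the outset; then the relation $\sum_j\mu_j\,dv^j=0$ literally becomes a constant-coefficient dependence among the components of $v$ (not just their differentials), and the reduction goes through verbatim with $k=m-1$. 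Either reading is adequate for every application of the lemma in the paper.
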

The simple proof can be found in \cite{toju}. The problems we study typically have several families of solutions. Some of them are reducible to a case of lower $m$ by Lemma \ref{redusointi2} and thus do not need to be considered in the same context. In the solutions given below we have verified that all the reducible solution families have been discarded and that they contain all the relevant solution families.

In the following sections we find several solution classes for cases $m=3$, $m=4$, $m=5$, and $m=6$. For cases $m=4$ and $m=5$ there are subcases that we do not consider here since for each $m$ we concentrate on the solutions for which the spatial component has the most freedom. In case $m=6$ the systems for $A$ will always be overdetermined and we are lucky to find out that there are at least some solution classes in that case, too.

Let us note that the explicit solutions given in \cite{yakzen} are particular cases of either $m=3$ or $m=4$.

\section{Cases $m=3$ and $m=4$}
If $A\in \mathbb{R}^{3\times 3}$ and $v\in \mathbb{R}^3$, any constraint of the form \eqref{yl-rajehto} would imply $\alpha=0$ and thus we may as well assume that $v=z$ by Lemma \ref{koordi}. This case might be called the Kirchhoff case because it is analogous to the Kirchhoff solution in 2D case. 

In this case for \eqref{a-rajehto} and \eqref{h-rajehto} to be independent of time we need $p_{123}=\det(A)$, as well as $Q_{12}$, $Q_{13}$, and $Q_{23}$ to be constant. By scaling we may assume that $A\in \mathbb{SL}(3)$ and by elementary counting arguments we suspect that there are 5 arbitrary functions in the general solution. Writing $A=RB$, where $R\in \mathbb{SO}(3)$ and $B$ is an upper triangular matrix, the condition $\alpha=1$ gives $b_{33}=1/(b_{11}b_{22})$. Then solving $Q_{ij}=c_{ij}$ where $Q_{ij}$ are as in \eqref{Q-kaava} we obtain
\begin{align*}
    b_{12}'=&\frac{b_{12} b_{11}'-w_3 b_{11} b_{22}+c_{12}}{b_{11}}\\[1mm]
      b_{13}'=&
      \frac{b_{13} b_{11}'+w_2 b_{11} b_{33}-w_3 b_{11} b_{23}+c_{13}}{b_{11}}\\[1mm]
        b_{23}'=&
        \frac{b_{11} b_{23} b_{22}'-w_1 +b_{11} c_{23}-b_{12} c_{13}+b_{13} c_{12}}{b_{22} b_{11}}\ .
\end{align*}
Hence there are 5 functions which can be chosen arbitrarily: $b_{11}$, $b_{22}$, $w_1$, $w_2$ and $w_3$. Moreover, to demonstrate the usefulness of Lemma \ref{redusointi} we present the following simplification. 
\begin{lemma}
Without loss of generality $c_{12}$ and $c_{13}$ can be chosen to be zero.
\end{lemma}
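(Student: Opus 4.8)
The plan is to apply Lemma \ref{redusointi} with a well-chosen \emph{constant} matrix $H\in\mathbb{SL}(3)$, supplemented by Lemma \ref{koordi}. Since for $m=3$ the spatial component is $v=z$, Lemma \ref{redusointi} replaces the pair $(A,v)$ by $(\tilde A,\tilde v)=(AH,\,H^{-1}z)$, with $\det\tilde A=\det A=1$ still, and the linear change of variables $\tilde v=H^{-1}z$ is absorbed by Lemma \ref{koordi}, bringing us back to the normalized form $\tilde v=z$, $\tilde A\in\mathbb{SL}(3)$, with some upper-triangular $\tilde B$ in a decomposition $\tilde A=\tilde R\tilde B$. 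Hence nothing in the set-up is lost, and the only quantities to track are the constants $c_{ij}=Q_{ij}$.

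The first step is the transformation rule for the $Q_{ij}$. As $H$ is constant, the $i$-th column of $\tilde A$ is $\tilde A_i=\sum_a H_{ai}A_a$, so $\tilde A_i'=\sum_a H_{ai}A_a'$, and by the bilinearity of $(A_i,A_j)\mapsto\langle A_i',A_j\rangle-\langle A_j',A_i\rangle$ we get $\tilde Q=H^{T}QH$, where $Q=(Q_{ij})$ is regarded as a skew-symmetric $3\times3$ matrix. In this language, demanding $c_{12}=c_{13}=0$ is exactly demanding that the first row of $\tilde Q$ vanish, i.e.\ that $He_1\in\ker Q$.

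The key elementary fact is that a skew-symmetric $3\times3$ matrix has rank $0$ or $2$. If $Q=0$ there is nothing to prove. If $Q$ has rank $2$, its kernel is a line spanned by some $k\ne0$; choose $H$ to have $k$ (rescaled so that $\det H=1$) as its first column and any two vectors completing it to a basis as its other columns. Then $\tilde Q_{1j}=0$ for every $j$, hence $c_{12}=c_{13}=0$, while the single remaining entry $\tilde Q_{23}$ carries the leftover freedom. After the QR decomposition $\tilde A=\tilde R\tilde B$ the displayed ODE system for $\tilde B$ is derived exactly as above, now with the terms containing $c_{12}$ and $c_{13}$ absent.

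There is no real obstacle here; the two points that need a little care are the one-line verification of the congruence identity $\tilde Q=H^{T}QH$, and the remark that the three normalizations $\det\tilde A=1$, $\tilde B$ upper triangular, and $\tilde v=z$ can all be re-established simultaneously, so that passing from $(A,v)$ to $(\tilde A,\tilde v)$ is a genuine reduction rather than a change of the problem.
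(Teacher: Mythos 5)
Your proof is correct and is essentially the paper's argument: the paper also applies Lemma \ref{redusointi} with a constant $H$, and its explicit matrix is precisely your construction with the kernel vector $(c_{23},-c_{13},c_{12})/c_{23}$ of the skew-symmetric matrix $Q$ placed in the first column (after assuming, by symmetry, that $c_{23}\neq 0$). Your congruence identity $\tilde Q=H^{T}QH$ together with the rank-$0$-or-$2$ observation simply makes explicit why that choice works and also covers the trivial case $Q=0$.
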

\begin{proof}
If some $c_{ij}$ is nonzero, by symmetry we may assume $c_{23}\neq 0$. Then letting $\tilde A=AH$ and $\tilde v=H^{-1}v$, where
\[
H=\begin{pmatrix}
    1 & 0 & 0 \\
    -c_{13}/c_{23} & 1 & 0 \\
    c_{12}/c_{23} & 0 & 1
\end{pmatrix}\, ,
\]
we have $\fii=Av=\tilde A \tilde v$ and $\tilde c_{12}=\tilde c_{13}=0$ for $\tilde A$.
\end{proof}

Note that in this case  $h$ is also constant in space and that the solution in Euler coordinates is $u=A'A^{-1}x$.

Then let $A\in \mathbb{R}^{3\times 4}$. Let us start with the constraint that  $\alpha$ in \eqref{a-rajehto}  must be constant. Using  $v=\big(z,f(z)\big)$ we obtain
\[
\alpha=p_{123}+p_{124}f_{001}-p_{134}f_{010}+p_{234}f_{100}\ .
\]
Note that we cannot require that all $p_{ijk}$ are constant. Since $p_{234}A_1-p_{134}A_2+p_{124}A_3-p_{123}A_4=0$ we must by Lemma \ref{redusointi2} have a constraint of the form
\[
\sum \gamma_{ijk}g_{ijk}=0\ ,
\]
where $\gamma_{ijk}$ are constants.
\begin{lemma}
We may assume this constraint to be $g_{234}=0$.
\end{lemma}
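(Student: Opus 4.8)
The plan is to use the reparametrization freedom of Lemmas \ref{koordi} and \ref{redusointi} to normalize the constant $3$-form underlying the constraint. First I would observe that the constraint $\sum_{1\le i<j<k\le4}\gamma_{ijk}g_{ijk}=0$ is exactly the statement that $v^\ast\eta_0$ vanishes, where $\eta_0=\sum_{1\le i<j<k\le4}\gamma_{ijk}\,dy^i\wedge dy^j\wedge dy^k$ is a constant $3$-form on $\mathbb{R}^4$. This $\eta_0$ is nonzero: since $v^1=z_1$, $v^2=z_2$, $v^3=z_3$ we have $g_{123}\equiv1$, so if the three coefficients $\gamma_{124},\gamma_{134},\gamma_{234}$ all vanished the constraint would force $\gamma_{123}=0$ as well and would be empty, contrary to assumption. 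Every $3$-form on a $4$-dimensional vector space is decomposable (it equals the contraction of a volume form with a vector), so there is $H\in\mathrm{GL}_4(\mathbb{R})$ with $H^\ast\eta_0=dy^2\wedge dy^3\wedge dy^4$. Taking $\tilde v=H^{-1}v$ and $\tilde A=AH$ as in Lemma \ref{redusointi}, so that $v=H\tilde v$, we get $v^\ast\eta_0=\tilde v^\ast(H^\ast\eta_0)=\tilde g_{234}\,\mathsf{vol}_z$, and the constraint becomes $\tilde g_{234}=0$.

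It then remains to restore the normalized form \eqref{v-z}. Since $\det(d\fii)\ne0$ we have $\mathsf{rank}(d\tilde v)=3$, so the four gradients $\nabla\tilde v^1,\dots,\nabla\tilde v^4$ span $\mathbb{R}^3$; as $\tilde g_{234}=0$, any basis taken from them must contain $\nabla\tilde v^1$, hence $\tilde g_{1jk}\ne0$ for some $\{j,k\}\subset\{2,3,4\}$. Permuting the coordinates $y^2,y^3,y^4$ (which changes $dy^2\wedge dy^3\wedge dy^4$ only by a sign, so the constraint still reads $g_{234}=0$) we may assume $\tilde g_{123}\ne0$. Then $(\tilde v^1,\tilde v^2,\tilde v^3)\,:\,D\to\mathbb{R}^3$ is a local diffeomorphism, and composing $\tilde\varphi^t$ with its inverse — legitimate by Lemma \ref{koordi} — replaces $\tilde v$ by a map of the form \eqref{v-z}. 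Such a reparametrization multiplies every $g_{ijk}$ by the same nonvanishing Jacobian, so the constraint is preserved and is now $g_{234}=0$.

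Equivalently, one can run the argument by direct computation: writing $v=(z,f)$ the constraint reads $\gamma_{123}+\gamma_{124}f_{001}-\gamma_{134}f_{010}+\gamma_{234}f_{100}=0$, that is, $\langle\gamma,\nabla f\rangle=-\gamma_{123}$ with $\gamma=(\gamma_{234},-\gamma_{134},\gamma_{124})\ne0$. Replacing $f$ by $f+az_1+bz_2+cz_3$ with $\langle\gamma,(a,b,c)\rangle=\gamma_{123}$ (an instance of Lemma \ref{redusointi} that keeps $v$ in the form \eqref{v-z}) removes the constant term, and the linear reparametrization $z=M\zeta$ with $Me_1=\gamma$ (Lemma \ref{koordi}, absorbing $\mathrm{diag}(M,1)$ into $A$ via Lemma \ref{redusointi}) turns $\langle\gamma,\nabla_zf\rangle=0$ into $f_{100}=0$, i.e.\ $g_{234}=0$.

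I do not expect a genuine obstacle here: the whole argument is bookkeeping with the two reparametrization lemmas. The only point needing care is that each transformation preserves the standing normalization \eqref{v-z} and does not change the family of solutions $\fii$; the latter is automatic because every step is an application of Lemma \ref{redusointi}, or of Lemma \ref{koordi} followed by absorbing a constant matrix into $A(t)$, and these lemmas guarantee exactly that $\fii$ solves the Euler equations before if and only if afterwards.
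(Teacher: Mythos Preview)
Your proposal is correct and follows essentially the same approach as the paper: use the linear freedom of Lemma~\ref{redusointi} to transform the constant $3$-form $\eta_0$ into $dy^2\wedge dy^3\wedge dy^4$, after which the constraint reads $g_{234}=0$. The paper simply assumes $\gamma_{234}=1$ and writes down one explicit lower-triangular $H$; your geometric version (decomposability of $3$-forms on $\mathbb{R}^4$) and your second explicit version are both valid repackagings of the same step, and you are more careful than the paper about justifying why some $\gamma_{ij4}$ is nonzero and about restoring the normalized form~\eqref{v-z} afterwards.
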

\begin{proof}
We may assume that $\gamma_{234}=1$. Then the transformation $\tilde{v}=Hv$, where
\[
H=\begin{pmatrix}
    1&0&0&0\\
    -\gamma_{134}&1&0&0\\
    \gamma_{124}&0&1&0\\
    -\gamma_{123}&0&0&1
\end{pmatrix},
\]
gives $\tilde{g}_{234}=0$.
\end{proof}

The solution of the equation $g_{234}=0$ is $v=(z_1,z_2,z_3,f(z_2,z_3))$ for an arbitrary function $f$.
Then we must have
\begin{align*}
    p_{123}=& b_{11}b_{22}b_{33}=e_{123}\\
     p_{124}=& b_{11}b_{22}b_{34}=e_{124} \\
      p_{134}=&
       b_{11} \left(b_{23} b_{34}-b_{24} b_{33}\right) =e_{134}
\end{align*}
as well as $Q_{ij}=c_{ij}$ for each $i<j$. We can assume that $e_{123}\ne0$ and by scaling we may further assume that $e_{123}=1$.
\begin{lemma}
Without loss of generality we may assume that  $e_{124}=e_{134}=0$.
\end{lemma}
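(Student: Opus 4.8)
The plan is to mimic the preceding reductions: exhibit an explicit matrix $H$, apply Lemma \ref{redusointi} to pass from $\fii=Av$ to $\fii=\tilde A\tilde v$ with $\tilde A=AH$, and then restore the normal form \eqref{v-z} for the spatial part using Lemma \ref{koordi}. Recall that we write $A=RB$ with $(B_1,B_2,B_3)$ upper triangular, that $p_{ijk}=\det(B_i,B_j,B_k)$, that we have already scaled so $e_{123}=1$, and that at this point $v=\big(z_1,z_2,z_3,f(z_2,z_3)\big)$. The observation driving the proof is that replacing the fourth column of $A$ by $A_4+\lambda_2A_2+\lambda_3A_3$ leaves $p_{123}$ unchanged and also preserves the relation $g_{234}=0$, while it changes $p_{124}$ and $p_{134}$ by multiples of $p_{123}$; since $p_{123}=e_{123}=1$, a suitable choice of $\lambda_2,\lambda_3$ annihilates both constants simultaneously.

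Concretely I would take
\[
H=\begin{pmatrix}
1&0&0&0\\
0&1&0&e_{134}\\
0&0&1&-e_{124}\\
0&0&0&1
\end{pmatrix},\qquad \tilde A=AH,\qquad \tilde v=H^{-1}v.
\]
Then $\tilde A_4=A_4+e_{134}A_2-e_{124}A_3$, so $\tilde B=BH$ again has upper triangular leading $3\times3$ block (only $b_{14},b_{24},b_{34}$ get modified), and a one line determinant computation gives $\tilde e_{123}=1$, $\tilde e_{124}=e_{124}-e_{124}=0$ and $\tilde e_{134}=e_{134}-e_{134}=0$. By Lemma \ref{redusointi}, $\tilde A\tilde v=Av=\fii$ is still a solution of the Euler equations, so the remaining conditions (the constancy of $p_{123}$ and of the $Q_{ij}$) persist, with the constants $c_{ij}$ merely relabelled.

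It remains to remove the cosmetic defect that $\tilde v=\big(z_1,\,z_2-e_{134}f,\,z_3+e_{124}f,\,f\big)$ is not literally of the form \eqref{v-z}. One checks that $\tilde g_{234}=0$ still holds, since $\nabla\tilde v^{\,2}$ and $\nabla\tilde v^{\,3}$ differ from $\nabla z_2,\nabla z_3$ only by multiples of $\nabla\tilde v^{\,4}=\nabla f$, and $\nabla f$ has vanishing first component because $f=f(z_2,z_3)$; one also checks that $\tilde g_{123}$ equals $\alpha=\det(d\fii)$, which is nonzero. Thus the new spatial map satisfies exactly the hypotheses that produced \eqref{v-z}, and after the change of coordinates $(z_1,z_2,z_3)\mapsto\big(z_1,\,z_2-e_{134}f,\,z_3+e_{124}f\big)$, which is admissible by Lemma \ref{koordi} because its Jacobian determinant equals $\tilde g_{123}=\alpha\neq0$, the map $\tilde v$ takes the shape $\big(\hat z_1,\hat z_2,\hat z_3,\hat f(\hat z_2,\hat z_3)\big)$ again. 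I do not expect any real obstacle here: because $e_{123}\neq0$ the matrix $H$ always exists, and the only points needing a moment's care are that the upper triangular normalization of $B$ survives the column operation on $A_4$ and that the form \eqref{v-z} can be re-established afterwards, both of which are immediate.
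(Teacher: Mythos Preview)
Your proof is correct and uses exactly the same transformation matrix $H$ as the paper; the paper's own proof is simply the one line ``apply Lemma~\ref{redusointi} with $H$'', while you have spelled out the determinant check and the restoration of the normal form \eqref{v-z} via Lemma~\ref{koordi}, details the paper leaves implicit.
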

\begin{proof}
We apply Lemma \ref{redusointi} with
\[
H=\begin{pmatrix}
    1&0&0&0\\
    0&1&0&e_{134}\\
    0&0&1&-e_{124}\\
    0&0&0&1
\end{pmatrix}\ .
\]
\end{proof}

But if $e_{124}=e_{134}=0$ then we have $\alpha=1$ and  we must also have $b_{24}=b_{34}=0$ which gives
\[
B=\begin{pmatrix}
    b_{11}&b_{12}&b_{13}&b_{14} \\
    0&b_{22}&b_{23}&0\\
    0&0&1/(b_{11}b_{22})&0
\end{pmatrix}\ .
\]
This leads to
\begin{theorem}
In case $m=4$ the general solution of the time component can be written in the following form:
\begin{equation}
    A=\hat R\begin{pmatrix}
    1&0&0\\
    0&\cos(\theta)&-\sin(\theta) \\
    0&\sin(\theta)&\cos(\theta)
\end{pmatrix} \begin{pmatrix}
    b_{11}&b_{12}&b_{13}&b_{14}\\
    0&b_{22}&b_{23}&0\\
    0&0&1/(b_{11}b_{22})&0
\end{pmatrix},
    \label{3x4-1ratk}
\end{equation}
where $b_{12}$, $b_{13}$, $b_{14}$ and $b_{23}$ satisfy the equations given in the proof below.  Functions $\theta$, $b_{11}$ and $b_{22}$ are arbitrary  and $\hat R$ is a constant rotation matrix.
\label{m4con1}
\end{theorem}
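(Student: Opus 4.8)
The plan is to carry the conditions of Lemma~\ref{kriteeri} all the way through for the matrix $B$ whose shape was just determined. We already know that $\alpha\equiv 1$ and that $v=\big(z_1,z_2,z_3,f(z_2,z_3)\big)$, so the only requirements left are $Q_{ij}=c_{ij}$ for all $1\le i<j\le 4$. By formula \eqref{Q-kaava} these six equations form a first order ODE system in the entries $b_{12},b_{13},b_{14},b_{23}$ and the angular velocity vector $w$, with $b_{11},b_{22}$ regarded as free data. The first step is purely computational: insert the four columns of $B$ into \eqref{Q-kaava}. Since the first and fourth columns are proportional to $(1,0,0)^{T}$ and the second and third are almost upper triangular, the cross products $B_i\times B_j$ are very sparse and each $Q_{ij}$ is a short polynomial in the $b$'s, their derivatives and $w$. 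One then reads off that $Q_{14}=c_{14}$ involves only $b_{11}$ and $b_{14}$ and is a linear first order ODE for $b_{14}$, while $Q_{12}=c_{12}$, $Q_{13}=c_{13}$, $Q_{23}=c_{23}$ each contain a term $-b_{11}b_{12}'$, $-b_{11}b_{13}'$, $-b_{22}b_{23}'$ with nonzero coefficient and can be solved for $b_{12}'$, $b_{13}'$, $b_{23}'$ in terms of $w$, $b_{11}$, $b_{22}$ and the already available functions. These four relations are precisely the equations for $b_{12},b_{13},b_{14},b_{23}$ that the statement refers to.

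The real content is the remaining pair $Q_{24}=c_{24}$ and $Q_{34}=c_{34}$. Substituting the expressions for $b_{12}'$ and $b_{13}'$ just obtained, the $w$-terms cancel and these two equations collapse to the algebraic identities
\[
  c_{14}b_{12}-c_{12}b_{14}=c_{24}\,b_{11}\ ,\qquad
  c_{14}b_{13}-c_{13}b_{14}=c_{34}\,b_{11}\ .
\]
Differentiating them once and re-inserting the ODEs of the first step yields the extra conditions $c_{14}\,w_3\,b_{22}=0$ and $c_{14}\big(w_2b_{33}-w_3b_{23}\big)=0$. If $c_{14}=0$ then $Q_{14}=0$ forces $b_{14}/b_{11}$ to be constant, so the fourth column of $B$ is a constant multiple of the first one (possibly zero); this family reduces to $m=3$ and is discarded by Lemma~\ref{redusointi2}. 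Hence in the relevant case $c_{14}\ne 0$, and since $b_{22}\ne 0$ and $b_{33}=1/(b_{11}b_{22})\ne 0$ we must have $w_2=w_3=0$. With these values the two algebraic identities above become first integrals of the surviving ODEs, so they impose nothing extra beyond fixing $c_{24}$ and $c_{34}$ in terms of the initial data of $b_{12},b_{13}$.

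It remains to translate $w=(w_1,0,0)$ into a statement about the rotation $R$ in $A=RB$. Putting $\hat w=(0,w_1,0,0)$ into the Hamiltonian system $a'=\tfrac14\tilde K_{\hat w}^{T}a$ shows that the pairs $(a_0,a_1)$ and $(a_2,a_3)$ each evolve by a planar rotation of angular speed $w_1/4$, in opposite senses; hence, using the structure of $R_a$ recalled in Section~\ref{euler-parameters}, $R_a$ factors as $\hat R R_\theta$ with $\hat R$ a constant rotation and $R_\theta$ the rotation about the first axis through the angle $\theta$, where $\theta$ is (up to an additive constant) a primitive of $\tfrac12 w_1$. Assembling $A=RB=\hat R R_\theta B$ with $B$ as derived and $b_{12},b_{13},b_{14},b_{23}$ governed by the ODEs of the first step gives \eqref{3x4-1ratk}; the functions $\theta,b_{11},b_{22}$ remain arbitrary since $w_1$ and $b_{11},b_{22}$ were never constrained.

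The step I expect to be the main obstacle is the collapse-and-case-split in the middle paragraph: one has to keep careful track of which monomials in the $b_{ij}$ and $c_{ij}$ may be assumed nonvanishing (using $\det(d\varphi)\ne 0$ together with Lemmas~\ref{redusointi} and \ref{redusointi2}) so that the degenerate branches are legitimately discarded, and one must check that the two algebraic identities really are first integrals of the ODE part rather than new constraints, so that they do not cut down the advertised freedom in $\theta,b_{11},b_{22}$.
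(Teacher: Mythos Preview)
Your proposal is correct and follows essentially the same route as the paper, only with the order of two steps interchanged. The paper first isolates $Q_{14}=c_{14}\ne 0$, solves the algebraic relations for $b_{12},b_{13}$ directly, and then substitutes these into the remaining equations to read off $w_2=w_3=0$ and the ODE for $b_{23}$; you instead write the ODEs for $b_{12}',b_{13}',b_{23}'$ first, obtain the same algebraic identities from $Q_{24},Q_{34}$ (after also inserting $b_{14}'$ from $Q_{14}$, which you should mention explicitly), and then differentiate them to recover the compatibility conditions $c_{14}w_3b_{22}=0$ and $c_{14}(w_2b_{33}-w_3b_{23})=0$. The two orderings are logically equivalent, and your handling of the degenerate branch $c_{14}=0$ and of the rotation $R$ via the Euler-parameter system matches the paper's.
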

\begin{proof}
Substituting the above $B$ to the equations $Q_{ij}=c_{ij}$ we first note that $Q_{14}=b_{14} b_{11}'-b_{11} b_{14}'=c_{14}\ne0$ because otherwise the solution reduces. Then we compute that
\[
        b_{12}=\frac{c_{24} b_{11}+c_{12}b_{14}}{c_{14}}\quad,\quad
      b_{13}=
      \frac{c_{34} b_{11}+c_{13}b_{14}}{c_{14}}\ .
\]
Substituting these back to the equations we note that $w_2=w_3=0$ and 
\[
        b_{23}'=
        \frac{b_{11} b_{23} b_{22}'-w_1 }{b_{22} b_{11}}+
       \Big( c_{23}+\frac{c_{12}c_{34}-c_{13}c_{24}}{c_{14}}\Big)\frac{1}{b_{22}}\ .
\]
 Then since $w_1$ is arbitrary we may as well define $\theta'=w_1/2$ in which case one can easily write down the solution of $a'=\tfrac{1}{4}\tilde K_{\hat w}^Ta$. By applying the constant rotation the solution can then be written as indicated.
\end{proof}

Note that if $\varphi$ is a solution  and $\hat R$ is a constant rotation then $\hat R\varphi$ is also a solution. In the sequel we will not anymore write explicitly the rotation in this kind of situation.

The components of vorticity or the Cauchy invariants of the solutions of this form are given by
\[
 h= \Big(c_{24} f_{001}-c_{34} f_{010}+c_{23},  -c_{14} f_{001}-c_{13},c_{14} f_{010}+c_{12} \Big)\ .
\]
Then one could ask how ''general'' vorticity can be constructed with solutions of this form. 
\begin{lemma}
 $\hat h=\big(h^1, h^2 ,h^3\big)$ is a vorticity for some solution given in Theorem \ref{m4con1}, if $h^k$ do not depend on $z_1$, $h^2_{010}+h^3_{001}=0$ and $h^1=e_2h^2+e_3h^3$ for some constants $e_j$.
\end{lemma}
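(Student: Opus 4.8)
The plan is to read off the constraints on $h$ directly from the explicit formula
\[
 h= \Big(c_{24} f_{001}-c_{34} f_{010}+c_{23},\ -c_{14} f_{001}-c_{13},\ c_{14} f_{010}+c_{12} \Big)
\]
derived just before the statement, where $f=f(z_2,z_3)$ and the $c_{ij}$ are constants with $c_{14}\neq 0$. First I would note that, since $f$ depends only on $z_2,z_3$, every $h^k$ is independent of $z_1$; this gives the first claimed property immediately. Next, differentiating $h^2=-c_{14}f_{001}-c_{13}$ with respect to $z_2$ and $h^3=c_{14}f_{010}+c_{12}$ with respect to $z_3$ gives $h^2_{010}=-c_{14}f_{011}$ and $h^3_{001}=c_{14}f_{011}$, hence $h^2_{010}+h^3_{001}=0$, the second property. (Equivalently, this is just the closedness/Cauchy-invariant compatibility of $h$.)

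For the third property I would solve for $f_{001}$ and $f_{010}$ from the expressions for $h^2$ and $h^3$: $f_{001}=-(h^2+c_{13})/c_{14}$ and $f_{010}=(h^3-c_{12})/c_{14}$. Substituting into the formula for $h^1$ yields
\[
 h^1=-\frac{c_{24}}{c_{14}}\,h^2-\frac{c_{34}}{c_{14}}\,h^3+\Big(c_{23}-\frac{c_{24}c_{13}}{c_{14}}-\frac{c_{34}c_{12}}{c_{14}}\Big)\,,
\]
so $h^1=e_2h^2+e_3h^3+e_0$ with $e_2=-c_{24}/c_{14}$, $e_3=-c_{34}/c_{14}$ and a constant $e_0$. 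To kill the constant term $e_0$ and match the stated form $h^1=e_2h^2+e_3h^3$, I would invoke the remaining freedom: the representation \eqref{3x4-1ratk} still contains $c_{23}$, and by Lemma \ref{redusointi} (as already used to normalize $e_{124}=e_{134}=0$ and in the $m=3$ case) one can translate $f$ by an affine function of $z_2,z_3$, or equivalently adjust $c_{23}$, to absorb $e_0$; alternatively, since $\hat h$ is prescribed only up to an additive constant vector compatible with the closedness condition, one simply chooses $c_{23}$ so that $e_0=0$. I would phrase this last point carefully so the "if" direction is clean.

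The genuinely substantive direction is the converse, i.e. showing that any $\hat h=(h^1,h^2,h^3)$ satisfying the three listed conditions actually arises. Given such $\hat h$, set $e_2,e_3$ as the prescribed constants and define the $c_{ij}$ by $c_{14}=1$ (any nonzero value works, by scaling), $c_{24}=-e_2$, $c_{34}=-e_3$, $c_{12}=0$, $c_{13}=0$, $c_{23}=0$; then recover $f$ by integrating $f_{001}=-h^2$ and $f_{010}=h^3$, which is consistent precisely because $h^2_{010}+h^3_{001}=0$ and the $h^k$ are $z_1$-independent, so a function $f(z_2,z_3)$ with these partials exists by the Poincaré lemma. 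One must then check that this $f$ reproduces $h^1$, which is exactly the relation $h^1=e_2h^2+e_3h^3$. The main obstacle I anticipate is bookkeeping: verifying that the $c_{ij}$ chosen this way are consistent with \emph{all} the constraints in the proof of Theorem \ref{m4con1} (in particular that $c_{14}\neq 0$, that the reduction normalizations $e_{124}=e_{134}=0$ are not disturbed, and that the ODEs for $b_{12},b_{13},b_{14},b_{23}$ remain solvable with $\theta,b_{11},b_{22}$ free) — none of this is deep, but it is where the proof could go wrong if one is careless about which constants are still free.
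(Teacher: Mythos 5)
Your proposal is correct and follows essentially the same route as the paper, whose entire proof is the one-line remark that eliminating $f$ from the equations $h=\hat h$ gives the result; you have simply spelled out that elimination (solving for $f_{001}$, $f_{010}$ from $h^2$, $h^3$, using $h^2_{010}+h^3_{001}=0$ as the integrability condition, and substituting into $h^1$). Your care about the constant offset $e_0$ and about the admissibility of the chosen $c_{ij}$ is harmless but not needed for the stated one-directional (``if'') claim, where one is free to set $c_{12}=c_{13}=c_{23}=0$ and $c_{14}=1$ from the outset.
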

\begin{proof}
Eliminating $f$ from equations $h=\hat h$ gives readily the result.
\end{proof}

Note that since $dh=0$ there is always an equation $h^1_{100}+h^2_{010}+h^3_{001}=0$ for $\hat h$.

If we suppose that $f$ depends only on one variable one could compute other families of solutions. However, these solutions are less interesting physically so we do not analyze them further.

\section{Case $m=5$}

In the case $m=5$ it turns out that there are numerous subcases providing solutions, and it is impossible for us to consider all of them here. Hence, we will concentrate on the cases for which the spatial component has the greatest freedom, which happens when we have two equations of the form $\sum \gamma _{ijk}g_{ijk} = 0$ as the spatial constraints. The systems obtained for the time component are then overdetermined and usually do not give any solutions if the spatial constraints are chosen arbitrarily. However, we found three cases that do provide solutions:
\begin{equation}
	\begin{cases}
		g_{134}+g_{235}=0\\
		g_{135}-g_{234}=0
	\end{cases} \ ,\ 
	\begin{cases}
		g_{134}=0\\
		g_{235}=0
	\end{cases} \ \mathrm{and}\ 
	\begin{cases}
		g_{134}+g_{235}=0\\
		g_{135}=0
	\end{cases} \ .
\label{3x5-tapaukset}
\end{equation}
These are analogous to the three cases we found for the 2D case in \cite[sec. 4]{toju}. As we briefly noted in \cite{toju}, the three 2D cases could be called elliptic, hyperbolic, and parabolic, based on the second-order differential equation that the spatial component functions satisfy in each case. We adopt this terminology here and refer to the three cases of \eqref{3x5-tapaukset} as the \emph{elliptic}, \emph{hyperbolic}, and \emph{parabolic} case, respectively. In the current section we present the solution for the spatial component and the time component for these three cases. As examples of the cases we will not consider any further but which give nontrivial families of solutions  we may cite
\[
	\begin{cases}
		G_{14}+G_{25}=0\\
		G_{15}-G_{24}=0
	\end{cases} ,\, 
	\begin{cases}
		G_{15}=0\\
		G_{24}=0
	\end{cases} ,\, 
	\begin{cases}
		G_{14}+G_{25}=0\\
		G_{15}=0
	\end{cases} .
\]

\subsection{Elliptic case}
Let the spatial constraints be
\[
    g_{134}+g_{235}=
    g_{135}-g_{234}=0 \, .
\]
The solution to this system is $v=(z_1,z_2,z_3,f^1(z_1,z_2,z_3),f^2(z_1,z_2,z_3))$, where $f^1$ and $f^2$ are anti-CR with respect to $z_1$ and $z_2$. For the time component,
\[
    p_{123},p_{345},p_{134}-p_{235},p_{135}+p_{234},p_{124},p_{125},p_{145},p_{245}\, ,
\]
as well as each $Q_{ij}$ has to be constant. Let us write $A=RB$ as before. Since 
\[
 p_{245}A_1-p_{145}A_2+p_{125}A_4-p_{124}A_5=0
\]
by Lemma \ref{plu}, we have $p_{124}=p_{125}=p_{145}=p_{245}=0$ by Lemma \ref{redusointi2}. This immediately implies that $b_{34}=b_{35}=0$.

As for the rest of the determinant constants, we may assume without loss of generality that $p_{123}=p_{345}=1$ and $p_{134}-p_{235}=p_{135}+p_{234}=0$. The proof of this is essentially the same as in the 2D case found in \cite[Lemmas 5.3 and 5.4]{toju}. We simply replace 3 and 4 by 4 and 5 in all the subscripts, respectively, and add a 3 to each of them, which does not affect the calculations. Then we compute that
\[
   \det(d\varphi)=1-\big(f^1_{100}\big)^2-\big(f^1_{010}\big)^2\ne 0\ .
\]
Then we compute that $b_{24}^2+b_{25}^2=b_{22}^2$ and thus we can write $b_{24}=\sin(\theta)b_{22}$ and $b_{25}=\cos(\theta)b_{22}$. But then $b_{14}=\cos(\theta)b_{11}+\sin(\theta)b_{12}$ and $b_{15}=-\sin(\theta)b_{11}+\cos(\theta)b_{12}$ so that 
\[
  B=\begin{pmatrix}
  b_{11}&b_{12}&b_{13}&\cos(\theta)b_{11}+\sin(\theta)b_{12}&-\sin(\theta)b_{11}+\cos(\theta)b_{12}\\
  0&b_{22}&b_{23}&\sin(\theta)b_{22}&\cos(\theta)b_{22}\\
  0&0&1/(b_{11}b_{22})&0&0
  \end{pmatrix}\ .
\]
Then preliminary computations show that in any case we must have
\[
  c_{45}=-c_{12}\ne 0\ , \ 
   c_{24}=c_{15}\ \mathrm{and}\  c_{25}=-c_{14}\ .
\]
\begin{lemma}
Without loss of generality we may assume that $c_{14}=c_{15}=0$.
\label{vakiot}
\end{lemma}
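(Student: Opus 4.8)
The plan is to proceed exactly as with the earlier normalizations: invoke Lemma~\ref{redusointi} with a suitable constant invertible $H\in\mathbb{R}^{5\times5}$, pass from $(A,v)$ to $(\tilde A,\tilde v)=(AH,H^{-1}v)$, and arrange $\tilde c_{14}=\tilde c_{15}=0$. Since $\varphi=\tilde A\tilde v$ is literally the same map, nothing about the Euler equations has to be rechecked; the whole content is choosing $H$ compatibly with the normal form already in force. I would take $H$ so that $\tilde v=(z_1,z_2,z_3,\tilde f^1,\tilde f^2)$ with
\[
  \tilde f^1=f^1+C_{11}z_1+C_{12}z_2\,,\qquad \tilde f^2=f^2+C_{12}z_1-C_{11}z_2\,,
\]
the skew shape of the linear part being precisely what keeps $\tilde f^1,\tilde f^2$ anti-CR with respect to $z_1,z_2$. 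On the time side this forces $\tilde A_4=A_4$, $\tilde A_5=A_5$, while $\tilde A_1,\tilde A_2$ are altered only by adding combinations of $A_4,A_5$; since $b_{34}=b_{35}=0$ these vectors lie in the span of $A_1,A_2$, so the Pl\"ucker constants $p_{124},p_{125},p_{145},p_{245}$ stay zero.

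The heart of the argument is the transformation law of $Q_{14}$ and $Q_{15}$. Writing $\tilde A_1=A_1-C_{11}A_4-C_{12}A_5$ and expanding $\tilde Q_{14}=\langle\tilde A_1',\tilde A_4\rangle-\langle\tilde A_4',\tilde A_1\rangle$ with $\tilde A_4=A_4$, the terms containing $\langle A_4',A_4\rangle$ cancel by skew-symmetry and the remainder collapses to a single multiple of $Q_{45}$, giving $\tilde c_{14}=c_{14}+C_{12}Q_{45}=c_{14}-C_{12}c_{12}$ and, similarly, $\tilde c_{15}=c_{15}-C_{11}Q_{45}=c_{15}+C_{11}c_{12}$. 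Because $c_{12}=-c_{45}\neq0$, the choice $C_{12}=c_{14}/c_{12}$, $C_{11}=-c_{15}/c_{12}$ annihilates both, and the identities $c_{24}=c_{15}$, $c_{25}=-c_{14}$ then give $\tilde c_{24}=\tilde c_{25}=0$ as well.

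The step I expect to cost real work is reconciling this $H$ with the remaining normalizations: the determinant condition $p_{123}=1$ and the explicit $\theta$-shape of $B$. Because $A_4,A_5$ have nontrivial components along $A_1$, the modification of $\tilde A_1,\tilde A_2$ rescales $p_{123}$ to $1-(c_{14}^2+c_{15}^2)/c_{12}^2$, perturbs $p_{134}-p_{235}$ and $p_{135}+p_{234}$ away from $0$, and breaks the pattern $b_{14}=\cos\theta\,b_{11}+\sin\theta\,b_{12}$, $b_{24}=\sin\theta\,b_{22},\dots$, so $\tilde A$ is no longer verbatim of the displayed form. The fix is routine but tedious: one re-runs the QR normalization for $\tilde A$, reapplies the reductions that restore $p_{134}-p_{235}=p_{135}+p_{234}=0$, and, if $p_{123}$ has become a constant $\neq1$, rescales $A\mapsto\lambda A$, $v\mapsto v/\lambda$ (possibly combined with a linear change of $z$ via Lemma~\ref{koordi}), none of which disturbs $\tilde c_{14}=\tilde c_{15}=0$; the borderline value $c_{14}^2+c_{15}^2=c_{12}^2$, which would kill $p_{123}$, yields a degenerate family and is discarded like the other irrelevant cases. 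This re-normalization is the only genuinely laborious ingredient and runs exactly parallel to the corresponding argument in \cite[sec.~4]{toju}.
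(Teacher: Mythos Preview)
Your approach differs substantially from the paper's. You use a unipotent shear $\tilde v^4=f^1+C_{11}z_1+C_{12}z_2$, $\tilde v^5=f^2+C_{12}z_1-C_{11}z_2$ to kill $c_{14},c_{15}$ directly, accept that this disturbs the determinant normalizations, and then plan to re-normalize. The paper instead first eliminates the $w_j$ from the system $Q_{ij}=c_{ij}$ and derives the a priori inequality $c_{12}^2>c_{14}^2+c_{15}^2$; this allows a parametrization of $(c_{14},c_{15})$ through $\tanh(2s)$, and then a single hyperbolic rotation $H\in\mathbb{R}^{5\times5}$ (with $\cosh s,\sinh s$ entries mixing the $(1,2)$ and $(4,5)$ blocks symmetrically) preserves \emph{all} the determinant normalizations already in force while setting $\tilde c_{14}=\tilde c_{15}=0$. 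No re-normalization step is needed.

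Your argument has a genuine gap. A minor point first: you overstate what your shear breaks. A direct computation gives $\tilde p_{134}-\tilde p_{235}=(p_{134}+C_{12}p_{345})-(p_{235}+C_{12}p_{345})=0$ and likewise $\tilde p_{135}+\tilde p_{234}=0$, so the only normalization lost is $p_{123}=1$; the ``routine but tedious'' re-run you anticipate is actually just a scaling. The substantive problem is that this scaling requires $\tilde p_{123}=1-(c_{14}^2+c_{15}^2)/c_{12}^2>0$, i.e.\ $c_{14}^2+c_{15}^2<c_{12}^2$. You mention only the boundary value and wave it off as ``degenerate'' without proof, and you say nothing about the range $c_{14}^2+c_{15}^2>c_{12}^2$, where your transformation lands in a different normal form with $p_{123}<0$. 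The paper's route sidesteps this precisely because the strict inequality is \emph{derived} from the $Q_{ij}$ equations before any transformation is applied, so the hyperbolic parametrization always makes sense. Without that derivation (or an independent argument ruling out $c_{14}^2+c_{15}^2\ge c_{12}^2$), your WLOG claim is incomplete.
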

\begin{proof}
Eliminating $w_j$ from the equations $Q_{ij}=c_{ij}$ we eventually find the following system:
\[
  \frac{c_{12}}{b_{11}^{2}+b_{12}^{2}+b_{22}^{2}}
  \begin{pmatrix}
  \cos(\theta)&\sin(\theta)\\
  -\sin(\theta)&\cos(\theta)
  \end{pmatrix}
  \begin{pmatrix}
  2 b_{11} b_{12}\\
  -b_{11}^{2}+b_{12}^{2}+b_{22}^{2}
  \end{pmatrix}=
  \begin{pmatrix}
  c_{14}\\
  c_{15}
  \end{pmatrix}\ .
\]
This implies that $
c_{12}^2-c_{15}^2-c_{14}^2 > 0$,
 and hence  we can choose constants $d$ and $s$ such that
\begin{align*}
    c_{15} &= c_{12}\cos(d)\tanh(2s)\\
    c_{14} &= -c_{12}\sin(d)\tanh(2s)\, .
\end{align*}

Let the transformation matrix $H$ be
\begin{equation}
H=\begin{pmatrix}
\cosh(s) & 0 & 0 & \sinh(s)\cos(d) & \sinh(s)\sin(d) \\
0 & \cosh(s) & 0 & \sinh(s)\sin(d) & -\sinh(s)\cos(d) \\
0 & 0 & 1 & 0 & 0 \\
\sinh(s)\cos(d) & \sinh(s)\sin(d) & 0 & \cosh(s) & 0 \\
\sinh(s)\sin(d) & -\sinh(s)\cos(d) & 0 & 0 & \cosh(s)
\end{pmatrix}\, ,
\label{h-mat-lem61}    
\end{equation}
and let $A=\tilde{A}H$, $\tilde{v}=Hv$. This transformation preserves the spatial constraints as well as the determinant constants we fixed earlier. If $d$ and $s$ are chosen as above, then $\tilde{c}_{14}=\tilde{c}_{15}=0$.
\end{proof}

\begin{theorem}
The solution to the time component is
\[
    \theta'=c_{12}/b_{11}^2=-w_3\, , \, w_1=w_2=0\, , \,  b_{12}=b_{13}=b_{23}=0\, , \,  b_{22}=b_{11}\, .
\]
\end{theorem}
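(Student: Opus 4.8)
The plan is to substitute the matrix $B$ displayed above into the ten relations $Q_{ij}=c_{ij}$, expanded by means of \eqref{Q-kaava}, using the constants already pinned down in the preceding paragraphs ($c_{14}=c_{15}=c_{24}=c_{25}=0$ and $c_{45}=-c_{12}\ne0$), and then to read the asserted rigid form of the solution off the resulting overdetermined system. The feature that will make the bookkeeping manageable is that the last two columns of $B$ are the first two rotated through $\theta$: $B_4=\cos\theta\,B_1+\sin\theta\,B_2$ and $B_5=-\sin\theta\,B_1+\cos\theta\,B_2$. Consequently $B_4\times B_5=B_1\times B_2$, $B_1\times B_4=\sin\theta\,(B_1\times B_2)$, $B_2\times B_5=\sin\theta\,(B_1\times B_2)$, and likewise for the remaining mixed products, while $B_4'=\theta'B_5+\cos\theta\,B_1'+\sin\theta\,B_2'$ and $B_5'=-\theta'B_4-\sin\theta\,B_1'+\cos\theta\,B_2'$. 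I also note that every cross product $B_i\times B_j$ with $i,j\in\{1,2,4,5\}$ points in the third coordinate direction, so that only $w_3$ enters $Q_{12},Q_{14},Q_{15},Q_{24},Q_{25},Q_{45}$, whereas $w_1$ and $w_2$ appear only through the four equations containing column $3$.

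First I would treat the six equations not involving $w_1,w_2$. Putting the substitutions above into $Q_{14}=Q_{15}=0$ and into $Q_{24}=Q_{25}=0$, each pair turns out to equal an orthogonal matrix depending only on $\theta$, applied to a two-component vector whose entries are free of $\theta$; since such a matrix is invertible, those vectors must vanish, and this yields the relation $\theta'b_{11}b_{12}=0$ together with two scalar equations in $w_3,\theta',b_{11},b_{12},b_{22}$ and their derivatives. Combining these with $Q_{45}=-c_{12}$ and $Q_{12}=c_{12}$ gives $\theta'b_{11}^2=\pm c_{12}$ and $\theta'(b_{12}^2+b_{22}^2)=\pm c_{12}$ with the same sign, so $\theta'$ is a nonzero constant times $b_{11}^{-2}$ and $b_{11}^2=b_{12}^2+b_{22}^2$. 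Since $\theta'\ne0$, the relation $\theta'b_{11}b_{12}=0$ forces $b_{12}=0$, hence $b_{22}^2=b_{11}^2$, and then $b_{22}=b_{11}$ because $\det(d\varphi)>0$; the remaining scalar equation coming from $Q_{12}$ then gives $w_3=-\theta'$, and keeping careful track of the signs in the original conventions produces exactly $\theta'=c_{12}/b_{11}^2=-w_3$.

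Next I would dispose of $Q_{13}=c_{13}$, $Q_{23}=c_{23}$, $Q_{34}=c_{34}$, $Q_{35}=c_{35}$. Here again $(Q_{34},Q_{35})$ is a $\theta$-dependent orthogonal matrix applied to a two-vector, and eliminating $w_1,w_2$ by means of $Q_{13},Q_{23}$ one is led to the conclusion that $(b_{13},b_{23},0)=B_3-(0,0,1/(b_{11}b_{22}))$ must be a constant linear combination of $B_1,B_2,B_4,B_5$. By Lemma \ref{redusointi}, applied with the corresponding constant shear acting on the third coordinate — followed by a linear change of the parameter $z$, legitimate by Lemma \ref{koordi}, that restores both the normal form \eqref{v-z} of $v$ and the anti-CR property of its last two components — one may then assume $b_{13}=b_{23}=0$, whereupon also $c_{13}=c_{23}=c_{34}=c_{35}=0$. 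With $b_{13}=b_{23}=0$ the four equations collapse to $w_1=w_2=0$, and a direct substitution confirms that the data so obtained satisfy all ten relations $Q_{ij}=c_{ij}$, which is the assertion of the theorem.

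The step I expect to be the main obstacle is nothing conceptual but simply the bulk of the algebra: ten first-order differential constraints on the functions $b_{11},b_{12},b_{13},b_{22},b_{23},\theta$ together with the auxiliary variables $w_1,w_2,w_3$, so in practice one would confirm the reduction with \textsf{rifsimp}. What makes a hand derivation feasible is exactly the elliptic structure — that columns $\{4,5\}$ form a $\theta$-rotation of columns $\{1,2\}$ — since this is what decouples $\theta$ through the orthogonal-pair phenomenon exploited above; a secondary delicate point is to check that the constant shear used in the reduction really does preserve the spatial constraints and the determinant normalizations fixed earlier.
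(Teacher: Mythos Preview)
Your proposal is correct and follows essentially the same route as the paper: one substitutes the displayed $B$ into the relations $Q_{ij}=c_{ij}$, exploits the rotational structure of columns $4,5$ to force $b_{12}=0$ and $b_{22}=b_{11}$, finds that $(b_{13},b_{23})$ is a constant combination of $B_1,B_2,B_4,B_5$, and then invokes Lemma~\ref{redusointi} to shear these away, after which $w_1=w_2=0$ drops out. Your explicit use of the ``orthogonal-pair'' phenomenon for $(Q_{14},Q_{15})$ and $(Q_{24},Q_{25})$ is a nice way to articulate why the algebra closes up, and your caution about checking that the shear preserves the spatial constraints is well placed --- the paper simply asserts this step.
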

Note that $c_{12}$ must be nonzero.

\begin{proof}
Solving for  $w_j$ and $\theta'$,  and using the previous Lemma, we note that $b_{12}=0$ which then readily yields that $b_{22}=b_{11}$ and
\[
b_{13}=\big(k_0+k_2\cos(\theta)+k_3\sin(\theta)\big)b_{11}\, , \,  b_{23}=\big(k_1-k_3\cos(\theta)+k_2\sin(\theta)\big)b_{11}\, .
\]
By Lemma \ref{redusointi} we can add a multiple of any $A_j$ to $A_3$ without loss of generality and thus achieve $b_{13}=b_{23}=0$. Substituting these to the formulas for $w_j$ and $\theta'$ gives the rest.
\end{proof}

The corresponding vorticity is then 
\[
h=c_{12}\begin{pmatrix}
    -f^1_{100}f^1_{001}-f^1_{010}f^2_{001}\\[1mm] f^1_{100}f^2_{001}-f^1_{010}f^1_{001}\\[1mm] (f^1_{100})^2+(f^1_{010})^2+1
\end{pmatrix}\,.
\]

\subsection{Hyperbolic case}
Here we consider the spatial constraints
\[
g_{134} = g_{235}=0\ ,
\]
which easily lead to $v=\big(z_1,z_2,z_3,f^1(z_1,z_3),f^2(z_2,z_3)\big)$.
Then for the time dependent part each $Q_{ij}$ and the following minors have to be constant:
\[
     p_{123}, p_{124}, p_{125}, p_{135}, p_{145}, p_{234}, p_{245}, p_{345}\, .
\]
Similarly to the previous case, we must have $p_{124}=p_{125}=p_{145}=p_{245}=0$ which again implies that $b_{34}=b_{35}=0$. For the rest of the determinant constants we can assume that $p_{123}=1$, $p_{345}=-1$ and $p_{135}=p_{234}=0$. The proof is again essentially the same as that of the corresponding 2D case \cite[Lemma 5.6]{toju}. From this we obtain
\[
  \det(d\varphi)=1-f^1_{100}f^2_{010}\ne 0\ .
\]
Then putting $b_{15}=\ell b_{11}$ we can now write
\[
  B=\begin{pmatrix}
  b_{11}&b_{12}&b_{13}&b_{12}/\ell&\ell b_{11}\\
  0&b_{22}&b_{23}&b_{22}/\ell&0\\
  0&0&1/(b_{11}b_{22})&0&0
  \end{pmatrix}\ .
\]
This gives
\begin{theorem}
We have $w=0$ and 
\[
    \ell'=-1/b_{11}^2\, , \, b_{22}=\ell b_{11}\, , \, b_{12}=b_{13}=b_{23}=0\, .
\]
\end{theorem}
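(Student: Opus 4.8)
The plan is to mimic the elliptic case exactly, with the differences coming from the hyperbolic spatial constraints. Starting from the matrix $B$ displayed just above the statement, I would substitute its columns into the formula \eqref{Q-kaava} for $Q_{ij}$ and impose $Q_{ij}=c_{ij}$ for all $i<j$. As in Lemma~\ref{vakiot}, the first move is to solve the resulting linear system for the auxiliary variables $w_1,w_2,w_3$; because $\det(d\varphi)=1-f^1_{100}f^2_{010}\ne0$ these can be expressed rationally in the $b$'s, $\ell$ and the constants $c_{ij}$. Substituting back, the remaining equations become relations purely among $b_{11},b_{12},b_{13},b_{22},b_{23},\ell$ and their derivatives. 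I expect, in analogy with the elliptic case, that a preliminary computation forces certain sign/compatibility conditions on the $c_{ij}$ (of hyperbolic rather than elliptic type, so with a sign flip somewhere), after which one can again invoke Lemma~\ref{redusointi} to normalise some of the $c_{ij}$ to zero — this is the analogue of assuming $c_{14}=c_{15}=0$, and it should drop out of the same kind of $\mathbb{SO}$/$\mathrm{O}(2,1)$-type transformation argument.

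Next I would extract the equation governing $\ell$. The entry $Q_{15}$ (or a suitable combination) should, after the eliminations above, read $b_{15}'b_{11}-b_{15}b_{11}'=\text{const}$; writing $b_{15}=\ell b_{11}$ this collapses to $\ell'b_{11}^2=\text{const}$, and matching against the value $p_{345}=-1$ fixed earlier gives $\ell'=-1/b_{11}^2$. With $w=0$ established, the equations $Q_{12}=c_{12}$ etc. should then force $b_{12}=0$ (exactly as $b_{12}=0$ appeared in the elliptic case), and once $b_{12}=0$ the constraint tying the fourth column to the second, namely $b_{24}=b_{22}/\ell$ together with $b_{14}=b_{12}/\ell=0$, combined with $Q_{24}=c_{24}$, should yield $b_{22}=\ell b_{11}$. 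Finally, as in the elliptic theorem, $b_{13}$ and $b_{23}$ will satisfy a homogeneous linear system whose general solution is a $b_{11}$-multiple of a combination of constants and trigonometric/hyperbolic functions of an angle; by Lemma~\ref{redusointi} one may add multiples of $A_1,A_2$ (and here $A_4,A_5$) to $A_3$ to kill these, giving $b_{13}=b_{23}=0$. Substituting everything back verifies consistency and completes the list in the statement.

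The main obstacle I anticipate is the same as in Lemma~\ref{vakiot}: disentangling the linear system for $w_1,w_2,w_3$ and the subsequent algebraic eliminations is computationally heavy, and one must be careful that the compatibility inequality on the $c_{ij}$ comes out with the correct (hyperbolic) sign so that the normalising transformation $H$ — the hyperbolic-case counterpart of \eqref{h-mat-lem61} — is real. A secondary subtlety is checking that this $H$ genuinely preserves both the spatial constraints $g_{134}=g_{235}=0$ and the determinant normalisations $p_{123}=1$, $p_{345}=-1$, $p_{135}=p_{234}=0$; this is routine but must be done, exactly as in the elliptic case. Once past those, the conclusion $w=0$, $\ell'=-1/b_{11}^2$, $b_{22}=\ell b_{11}$, $b_{12}=b_{13}=b_{23}=0$ drops out, and I would end by remarking (as the paper does after the elliptic theorem) that here $b_{11}$ and $\ell$ — equivalently $b_{11}$ and $b_{22}$ — remain arbitrary, and record the corresponding Cauchy invariants $h$ by eliminating $f^1,f^2$ from $h=\hat h$.
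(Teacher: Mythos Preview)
Your outline follows the elliptic template too closely. The paper's proof is considerably shorter here, and the reason is structural: in the hyperbolic $B$, columns $4$ and $5$ are scalar multiples of columns $2$ and $1$ (by $1/\ell$ and $\ell$), not rotation-mixed, so several $Q_{ij}$ simplify immediately. In particular $B_1\times B_5=0$, so $Q_{15}=-\ell' b_{11}^2$ drops out \emph{before} any elimination of $w$; the paper simply scales $c_{15}=1$ to obtain $\ell'=-1/b_{11}^2$ (the $-1$ has nothing to do with $p_{345}=-1$). No hyperbolic analogue of Lemma~\ref{vakiot} or of the matrix~\eqref{h-mat-lem61} is invoked at all.

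The paper then solves for $w_j$ from $Q_{12},Q_{13},Q_{23}$ and substitutes back, obtaining $b_{12}=k_1\ell b_{11}$, $b_{22}=k_2\ell b_{11}$, $b_{13}=(k_3\ell+k_4)b_{11}$, $b_{23}=(k_5\ell+k_6)b_{11}$. Note that $b_{12}=0$ is \emph{not} forced by the equations as you expect; it is arranged by adding a multiple of $A_5$ to $A_2$ (and of $A_1$ to $A_4$) via Lemma~\ref{redusointi}, and similarly a linear combination of the other columns is added to $A_3$ to kill $b_{13},b_{23}$; scaling gives $k_2=1$. Only after these column operations does one read off $w=0$ --- the reverse of the order you propose. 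Your route might be salvageable, but the anticipated hyperbolic $H$-transformation and the ``forced $b_{12}=0$'' step are red herrings in this case.
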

\begin{proof}
We may assume $c_{15}=1$ by scaling. From the equation $Q_{15}=1$ one immediately gets the equation for $\ell$ and otherwise we solve $w_j$ from equations $Q_{12}=c_{12}$, $Q_{13}=c_{13}$ and $Q_{23}=c_{23}$. Substituting everything back we note that
\[
b_{12}=k_1\ell b_{11}\, , \,  b_{22}=k_2\ell b_{11}\, , \, b_{13}= (k_3\ell+k_4)b_{11}\, , \, b_{23}= (k_5\ell+k_6)b_{11}\, .
\]
Without loss of generality we may add a multiple of $A_5$ to $A_2$ and a multiple of $A_1$ to $A_4$ in order to obtain $b_{12}=0$, after which we add a suitable linear combination of the columns of $A$ to $A_3$ to obtain $b_{13}=b_{23}=0$. By scaling we may also assume $k_2=1$. Finally we note that this form of solutions forces $w=0$.
\end{proof}

In this case the vorticity can be written as

\[
h=\Big(
    -f^1_{001}\,,\, -f^2_{001}\,,\, f^1_{100}+f^2_{010}\Big)\ .
\]

\subsection{Parabolic case}

Here we suppose that 
\[
g_{134}+g_{235}=g_{135}=0\ ,
\]
which implies that $v$ is of the form
\[
v=\big(z, f^1+z_2f^2_{100}, f^2\big)
\]
and $f^j=f^j(z_1,z_3)$.
Then for the time component
\[
p_{123},p_{124},p_{125},p_{145},p_{234},p_{245},p_{345},p_{235}-p_{134}
\]
and each $Q_{ij}$ are constant. Hence by Lemma \ref{plu} and Lemma \ref{redusointi2} we have  $e_{124}=e_{125}=e_{145}=e_{245}=0$. Again, the reduction of the other determinant constants goes as in the corresponding 2D case so by \cite[Lemma 5.8]{toju} we may assume that $e_{234}=1$ and $e_{123}=e_{345}=p_{235}-p_{134}=0$. From this it follows that
\[
  \det(d\varphi)=f_{100}^1+z_2f_{200}^2\ne 0\ .
\]
Thus we have
\[
A_1 = \ell A_2 \ \mathrm{and}\ 
A_5 = \ell A_4\ ,
\]
where $\ell=p_{134}$. Now we have the $B$ matrix in the QR decomposition in the following form
\[
  B=\begin{pmatrix}
  \ell b_{12}&b_{12}&b_{13}&b_{14}&\ell b_{14}\\
  0&0&b_{23}&b_{24}&\ell b_{24}\\
  0&0&0&1/b_{12}b_{23}&\ell/b_{12}b_{23}
  \end{pmatrix}\ .
\]
Substituting to the equations $Q_{ij}=c_{ij}$ we see that $c_{12}$ and $c_{45}$ must be nonzero and that $c_{15}=c_{24}=0$ and $c_{25}=-c_{14}$. Moreover we have
\begin{lemma}
We may assume that $c_{14}=c_{13}=c_{23}=c_{34}=c_{35}=0$.
\end{lemma}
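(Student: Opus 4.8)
The plan is to exhibit one constant invertible matrix $H$ and invoke Lemma \ref{redusointi}; thus it suffices to find $H$ such that (i) the structural identities $A_1=\ell A_2$ and $A_5=\ell A_4$ survive, (ii) the spatial form $v=(z,f^1+z_2f^2_{100},f^2)$ with $f^j=f^j(z_1,z_3)$ survives, up to a linear change of the parameter domain permitted by Lemma \ref{koordi}, (iii) the already normalised determinant constants are unchanged, and (iv) the five listed $c_{ij}$ become zero. First I would isolate the admissible elementary moves; two are relevant. Move (a): replace $A_3$ by $A_3+\lambda_1A_1+\lambda_2A_2+\lambda_4A_4+\lambda_5A_5$ with real constants $\lambda_k$; on the spatial side this amounts, after the reparametrisation $(z_1,z_2,z_3)\mapsto(z_1-\lambda_1z_3,z_2-\lambda_2z_3,z_3)$ of Lemma \ref{koordi}, to an innocuous modification of $f^1$ and $f^2$, so (ii) holds, while every minor it perturbs picks up a determinant with two linearly dependent columns, so (i) and (iii) hold. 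Move (b): replace $A_1$ by $A_1+\mu A_5$ and simultaneously $A_2$ by $A_2+\mu A_4$ with a real constant $\mu$; then $\tilde A_1=\ell(A_2+\mu A_4)=\ell\tilde A_2$ and $\tilde A_5=\ell\tilde A_4$, so (i) holds; on the spatial side only $f^2$ changes, to $f^2-\mu z_1$, so (ii) holds with the original $z$; and a short determinant computation gives (iii).

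For the bookkeeping I would use that replacing a single column $A_j$ by $A_j+\sum_k\lambda_kA_k$ (with $\lambda_k$ constant) sends $Q_{\mu j}\mapsto Q_{\mu j}+\sum_k\lambda_kQ_{\mu k}$ for $\mu\neq j$ and leaves the remaining $Q$'s untouched, together with the obvious two-column variant for move (b). Feeding in $Q_{12}=c_{12}$, $Q_{45}=c_{45}$, $Q_{15}=Q_{24}=0$ and $Q_{25}=-c_{14}$, move (b) sends $Q_{14}$ to $c_{14}-\mu c_{45}$, leaves $c_{45}$ fixed, and replaces $c_{12}$ by $(c_{12}c_{45}+c_{14}^2)/c_{45}$. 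Taking $\mu=c_{14}/c_{45}$, which is legitimate since $c_{45}\neq0$, therefore kills $c_{14}$ and with it $c_{25}=-c_{14}$. After this, move (a) perturbs precisely $c_{13},c_{23},c_{34},c_{35}$, and since now $c_{14}=0$ each of these four changes by one of the $\lambda_k$ times $\pm c_{12}$ or $\pm c_{45}$; as $c_{12}$ and $c_{45}$ are nonzero, a suitable choice of $\lambda_1,\lambda_2,\lambda_4,\lambda_5$ makes all four vanish without disturbing $c_{14}$. Composing moves (b) and (a) produces the required $H$.

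The only genuine obstacle is verifying that the constant $(c_{12}c_{45}+c_{14}^2)/c_{45}$ produced in the middle of the argument is still nonzero, since the remainder of this subsection and the theorem that follows rely on $c_{12}\neq0$. I would argue this structurally rather than by estimation: from the parabolic $B$-matrix one reads off $c_{12}=Q_{12}=\ell'b_{12}^2$ and $c_{45}=Q_{45}=-\ell'|B_4|^2$, and one checks that move (b) leaves the template $A=RB$ intact, with $\ell$ unchanged and $b_{12}$ replaced by $|A_2+\mu A_4|$; hence for the transformed data $Q_{12}=\ell'\,|A_2+\mu A_4|^2$. Now $c_{12}\neq0$ forces $\ell'\neq0$, and $A_2+\mu A_4\neq0$ by the rank-three requirement on $A$, so this transformed $Q_{12}$, which equals $(c_{12}c_{45}+c_{14}^2)/c_{45}$, is nonzero. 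With that settled, all that remains are the routine transvection calculations sketched above.
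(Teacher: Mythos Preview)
Your proof is correct and follows essentially the same two-step strategy as the paper: first the simultaneous transvection $\tilde A_1=A_1+\mu A_5$, $\tilde A_2=A_2+\mu A_4$ with $\mu=c_{14}/c_{45}$ to kill $c_{14}$ (and hence $c_{25}$), then the modification of $A_3$ by a combination of $A_1,A_2,A_4,A_5$ to kill the remaining four constants. Your version is more thorough than the paper's, which simply writes down the two transformations; in particular you check explicitly that the spatial form $v=(z,f^1+z_2f^2_{100},f^2)$ is preserved and that the resulting $c_{12}=(c_{12}c_{45}+c_{14}^2)/c_{45}$ remains nonzero, points the paper leaves implicit.
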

\begin{proof}
Use the transformation $\tilde{A}_2 = A_2 + c_{14}/c_{45}A_4$, $\tilde{A}_1 = A_1 + c_{14}/c_{45}A_5$ to obtain $c_{14}=0$ and thus $c_{25}=0$. After that, write $\tilde{A}_3=A_3+c_{23}/c_{12}A_1-c_{13}/c_{12}A_2-c_{35}/c_{45}A_4+c_{34}/c_{45}A_5$. Since $c_{14}=c_{15}=c_{24}=c_{25}=0$, this transformation gives $c_{13}=c_{23}=c_{34}=c_{35}=0$.
\end{proof}

\begin{theorem}
The solution to the time constraints is
\[
A=\begin{pmatrix}
\ell b_{12} & b_{12} & 0 & 0 & 0\\
0 & 0 & b_{12}^{-2} & 0 & 0\\
0 & 0 & 0 & b_{12} & \ell b_{12}
\end{pmatrix}\, ,
\]
where $\ell'=c_{12}/b_{12}^2$.
\end{theorem}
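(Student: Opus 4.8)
The plan is to substitute the displayed matrix $B$ into the ten equations $Q_{ij}=c_{ij}$ through \eqref{Q-kaava} and to exploit the two column relations $B_1=\ell B_2$, $B_5=\ell B_4$ to collapse the whole system. Since $B_1\times B_2=0$, writing $B_1'=\ell'B_2+\ell B_2'$ gives at once $Q_{12}=\ell'|B_2|^2=\ell'b_{12}^2$, so $Q_{12}=c_{12}$ is exactly the claimed ODE $\ell'=c_{12}/b_{12}^2$; because $c_{12}\neq0$ and $b_{12}\neq0$ this forces $\ell'$ to be nowhere zero, so $\ell$ is nonconstant and the relations $A_1=\ell A_2$, $A_5=\ell A_4$ do not trigger Lemma \ref{redusointi2}.

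Next I would use the same two relations to pair off the remaining equations: from $B_1=\ell B_2$ one obtains $Q_{1j}=\ell\,Q_{2j}+\ell'\langle B_2,B_j\rangle$ for $j=3,4,5$, and from $B_5=\ell B_4$ one obtains $Q_{i5}=\ell\,Q_{i4}-\ell'\langle B_i,B_4\rangle$ for $i=1,2,3$. Inserting the constants already forced to vanish in the passages preceding the theorem ($c_{13}=c_{14}=c_{15}=c_{23}=c_{24}=c_{25}=c_{34}=c_{35}=0$) and using that $\ell'$, $b_{12}$, $b_{23}$ are nowhere zero, these identities yield $\langle B_2,B_3\rangle=\langle B_2,B_4\rangle=\langle B_3,B_4\rangle=0$, that is, $b_{13}=0$, $b_{14}=0$, and then $b_{24}=0$. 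Hence $B$ reduces to
\[
  B=\begin{pmatrix}
  \ell b_{12}&b_{12}&0&0&0\\
  0&0&b_{23}&0&0\\
  0&0&0&1/(b_{12}b_{23})&\ell/(b_{12}b_{23})
  \end{pmatrix}.
\]

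For this reduced $B$ the equations $Q_{23},Q_{24},Q_{34}$ become $w_3b_{12}b_{23}=c_{23}$, $-w_2/b_{23}=c_{24}$, $w_1/b_{12}=c_{34}$, whose right-hand sides all vanish, so $w=0$; then $4H_aa'=w=0$ forces $a'=0$ (use $4K_aa'=\hat w$ and $K_a\in\mathbb{SO}(4)$), so the rotation $R$ is constant and may be dropped as in the convention introduced after Theorem \ref{m4con1}. Finally $Q_{45}=c_{45}$ reads $-\ell'/(b_{12}b_{23})^2=c_{45}$, which together with $\ell'=c_{12}/b_{12}^2$ gives $b_{12}^4b_{23}^2=-c_{12}/c_{45}$; in particular $c_{12}$ and $c_{45}$ have opposite signs, and after rescaling columns $4$ and $5$ (compensated on column $3$) via Lemma \ref{redusointi} so that $c_{45}=-c_{12}$ we get $b_{12}^2b_{23}=1$, hence $b_{23}=b_{12}^{-2}$ and $1/(b_{12}b_{23})=b_{12}$. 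Substituting these into $B$ gives precisely the matrix of the statement, with $b_{12}$ arbitrary and $\ell'=c_{12}/b_{12}^2$; a direct check then confirms that every $Q_{ij}$ evaluates to the appropriate constant.

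The computations here are very light, so I expect the only real difficulty to be organisational: one must keep careful track of exactly which of the $p_{ijk}$ and $c_{ij}$ were normalised to $0$ or $1$ in the passages preceding the theorem, since the pairing identities close up only because so many $c_{ij}$ already vanish. The one step that is not mere bookkeeping is the final normalisation $b_{23}=b_{12}^{-2}$, where one has to verify that the residual rescaling freedom compatible with all previously imposed constraints (Lemma \ref{redusointi}) is exactly enough to arrange $c_{45}=-c_{12}$.
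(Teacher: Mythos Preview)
Your proposal is correct and follows essentially the same route as the paper: substitute the block form of $B$ (coming from $A_1=\ell A_2$, $A_5=\ell A_4$) into the equations $Q_{ij}=c_{ij}$, use the many vanishing $c_{ij}$ to force $b_{13}=b_{14}=b_{24}=0$ and $w=0$, and then read off the relation $c_{45}b_{23}^2b_{12}^4+c_{12}=0$, which after a scaling yields $b_{23}=b_{12}^{-2}$. Your systematic use of the pairing identities $Q_{1j}=\ell\,Q_{2j}+\ell'\langle B_2,B_j\rangle$ and $Q_{i5}=\ell\,Q_{i4}-\ell'\langle B_i,B_4\rangle$ is a tidy way to organise the reduction, but it amounts to exactly the direct computation the paper sketches.
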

\begin{proof}
Since so many constants are zero one immediately computes that $\ell'=c_{12}/b_{12}^2$ and $b_{13}=b_{14}=w_2=w_3=0$. Using these we obtain that also $w_1=b_{24}=0$ and then we are left with the equation $c_{45}b_{23}^2b_{12}^4+c_{12}=0$. Scaling gives then the solution above.
\end{proof}

The vorticity is now
\[
  h=c_{12}\begin{pmatrix}
      -f^2_{100}f^2_{001}\\[1mm]
f^1_{100} f^2_{001}-f^2_{100} f^1_{001}+
z_2  \big( f^2_{200} f^2_{001}-f^2_{100} f^2_{101}\big) \\[1mm]
\left(f^2_{100}\right)^{2}+1
  \end{pmatrix} \ .
\]

\section{$m=6$: hyperbolic case}
In these final three sections we study cases where $m=6$. When $m=6$ the computations really become much more involved than for smaller $m$. Also, coming up with spatial constraints that yield solutions to both the spatial and the time component is difficult. We managed to find three sets of spatial constraints that provide solutions, which are again analogous to the three cases we considered in \cite[sec. 4]{toju} and to the ones considered in the previous section. But then we also found that by restricting the time component of these solutions we obtain more general solutions to the spatial component which we could not have come up with otherwise.

In this section we consider the spatial constraints
\[
G_{14} = G_{25} = G_{36} = 0\, .
\]
The solution of the spatial component is
\[
v=(z_1,z_2,z_3,f_1(z_1),f_2(z_2),f_3(z_3))\, .
\]
Incidentally one can check that with this type of $v$ one can find nontrivial solutions in any dimension.

The conditions for the time dependent part give that the following functions are constant:
\begin{align*}
&p_{123}, p_{126}, p_{135}, p_{156}, p_{234}, p_{246}, p_{345}, p_{456}, \\
&Q_{12}, Q_{13}, Q_{15}, Q_{16}, Q_{23}, Q_{24}, Q_{26}, Q_{34}, Q_{35}, Q_{45}, Q_{46}, Q_{56}\, .
\end{align*}
Despite the fact that there are 20 equations to be satisfied by 18 functions, there are nontrivial solutions for the time component; the simplest example is
\[
A=\begin{pmatrix}
e^{c_1t} & 0 & 0 & 0 & 0 & e^{-c_1t} \\
0 & e^{c_2t} & 0 & e^{-c_2t} & 0 & 0 \\
0 & 0 & e^{-(c_1+c_2)t} & 0 & e^{(c_1+c_2)t} & 0
\end{pmatrix}\, ,
\]
where $c_j$ are constants.
\begin{lemma}
We may assume that
\begin{enumerate}
\item $e_{126}=e_{135}=e_{156}=e_{234}=e_{246}=e_{345}=0$\, , or
\item $e_{126}=e_{135}=e_{234}=e_{246}=e_{345}=e_{456}=0$\, .
\end{enumerate}
\label{hypercases}
\end{lemma}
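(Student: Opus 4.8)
The plan is to use Lemma \ref{redusointi} repeatedly, exactly as in the earlier normalization lemmas of this paper, choosing a regular $6\times 6$ matrix $H$ acting on the columns of $A$ (equivalently on $v$) that sets the indicated determinant constants $e_{ijk}$ to zero while preserving both the spatial constraints $G_{14}=G_{25}=G_{36}=0$ and the constancy-in-time structure of the remaining $p_{ijk}$ and $Q_{ij}$. Since the spatial constraints pair the columns as $\{1,4\}$, $\{2,5\}$, $\{3,6\}$, one should work with block-diagonal (in those pairs) or suitably structured $H$ so that $v=(z_1,z_2,z_3,f_1(z_1),f_2(z_2),f_3(z_3))$ is mapped to a $\tilde v$ of the same form (up to an allowed coordinate change as in Lemma \ref{koordi}). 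First I would record how the relevant $p_{ijk}$ transform under a column operation $A_k\mapsto A_k+\lambda A_l$: each such operation is a transvection, $\det(\cdot)$ is multilinear, so $p_{ijk}$ picks up a multiple of another $p$ with one index swapped. One then checks which of the listed minors are forced constant and uses the Plücker relations of Lemma \ref{plu} to see that not all eight can be independently prescribed.

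Next I would argue, as in the $m=5$ cases, that a Plücker-type syzygy among the columns together with Lemma \ref{redusointi2} forces several of the $p_{ijk}$ to vanish outright; here the relevant relation is the one of the form $p_{\cdot 45}A_\cdot-\cdots=0$ (the third identity in Lemma \ref{plu} applied to appropriate index quadruples), which should kill, say, one minor in each of the two cases. The two alternatives in the statement then correspond to the two ways of resolving the remaining freedom: either one clears $e_{456}$ and keeps $e_{156}$ nonzero-normalizable (case (1) after a further rescaling sets $e_{156}=0$ too, so really case (1) lists six vanishing minors and implicitly $e_{123}$ is scaled to $1$), or one clears $e_{156}$ instead and keeps $e_{456}$, giving case (2). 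Concretely I would exhibit the two transformation matrices $H^{(1)}$ and $H^{(2)}$ explicitly, each a product of the elementary transvections $A_4\mapsto A_4+\alpha A_1$, $A_5\mapsto A_5+\beta A_2$, $A_6\mapsto A_6+\gamma A_3$ (and symmetric ones), with the coefficients chosen as ratios $e_{ijk}/e_{lmn}$ just as in the earlier lemmas; then verify that $\tilde v=H v$ is again of the separated product form and that the spatial constraints $\tilde G_{14}=\tilde G_{25}=\tilde G_{36}=0$ survive because $H$ respects the index pairing.

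The bookkeeping is the main obstacle: with eight constrained minors, twenty constrained $Q_{ij}$, and a $6\times 6$ transformation group one must be careful that the column operations used to zero out the $e$'s do not reintroduce time-dependence into the $p$'s or $Q$'s that were supposed to be constant, and that the two cases genuinely exhaust the possibilities up to reduction (i.e. every solution family not covered by (1) or (2) is reducible by Lemma \ref{redusointi2} and hence handled under smaller $m$). The cleanest way to present this is to say explicitly that the two cases arise from a case split on which of $e_{156}$, $e_{456}$ can be normalized, note that the subcase where both of these (and enough others) vanish simultaneously forces a Plücker relation to make the configuration degenerate and reduce, and otherwise invoke "the same computation as in \cite[sec.~4]{toju}" for the routine verification, leaving the reader to check with a computer algebra system that the transformed constraints are as claimed. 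I expect the proof to be short on paper precisely because all the genuine work is pushed into the explicit choice of $H$ and a one-line appeal to the earlier normalization lemmas and to Lemma \ref{plu}.
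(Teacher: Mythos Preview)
Your general setup is right: work with column operations that mix only within the pairs $\{1,4\}$, $\{2,5\}$, $\{3,6\}$ so that the spatial form $v=(z_1,z_2,z_3,f_1(z_1),f_2(z_2),f_3(z_3))$ and the constraints $G_{14}=G_{25}=G_{36}=0$ are preserved. But the heart of your argument is misdirected in two ways.

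First, the Pl\"ucker-plus-irreducibility mechanism you invoke from the $m=5$ sections does not fire here. In those cases a single four-term syzygy among the columns, combined with Lemma \ref{redusointi2}, forced four minors to vanish. In the present lemma nothing is \emph{forced} to vanish by irreducibility; every zero in the two lists is achieved by an explicit column operation. The reducibility argument only enters in the \emph{next} lemma, where case (2) is shown to collapse.

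Second, your case split is not the one that actually arises. The paper first assumes $e_{123}\ne0$ by symmetry and uses $A_4\mapsto A_4+\alpha A_1$ to set $e_{234}=0$. The genuine branching is then: either $e_{246}$, $e_{345}$, $e_{456}$ are already all zero, in which case two more transvections $A_5\mapsto A_5+\beta A_2$, $A_6\mapsto A_6+\gamma A_3$ kill $e_{135}$ and $e_{126}$ and you land in case (2); or not, in which case one can arrange $e_{456}\ne0$. In this second branch the four coefficients of the simultaneous operations $A_5\leftrightarrow A_2$, $A_6\leftrightarrow A_3$ are determined by a linear system whose coefficient matrix is $\hat H=\begin{pmatrix}e_{123}&e_{156}\\ e_{234}&e_{456}\end{pmatrix}$, and it is precisely $\det\hat H=e_{123}e_{456}\ne0$ that lets you solve for them and set $e_{135}=e_{126}=e_{345}=e_{246}=0$ at once; a final $\{1,4\}$-operation then clears $e_{234}$ and $e_{156}$, giving case (1). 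So the dichotomy is not ``which of $e_{156},e_{456}$ do we keep'' but ``is $\hat H$ invertible or not,'' and your sketch does not identify this linear-algebra obstruction.
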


\begin{proof}
Due to symmetry we may assume that $e_{123}\neq 0$. Add a multiple of $A_1$ to $A_4$ to obtain $e_{234}=0$. Now, if also $e_{246}=e_{345}=e_{456}=0$, we obtain case (2) by further adding a multiple of $A_2$ to $A_5$ and a multiple of $A_3$ to $A_6$.

If $e_{246}$, $e_{345}$, and $e_{456}$ are not all zero, we may assume that $e_{456}\neq 0$ by adding a multiple of $A_2$ to $A_5$ or a multiple of $A_3$ to $A_6$ if necessary. In this case we show that we may assume $e_{135}=e_{345}=e_{126}=e_{246}=0$. Apply the transformation $\tilde{A}=AH$, where
\[
H=\begin{pmatrix}
1&0&0&0&0&0 \\
0&1&0&0&c_1&0 \\
0&0&1&0&0&c_2 \\
0&0&0&1&0&0 \\
0&c_3&0&0&1&0 \\
0&0&c_4&0&0&1
\end{pmatrix}\, .
\]
Writing
\[
\hat{H}=\begin{pmatrix}e_{123} & e_{156}\\ e_{234} & e_{456} \end{pmatrix}\, ,
\]
we have
\[
\begin{pmatrix}-\tilde{e}_{135} & \tilde{e}_{126} \\ \tilde{e}_{345} & -\tilde{e}_{246} \end{pmatrix} = \begin{pmatrix}-e_{135} & e_{126} \\ e_{345} & -e_{246} \end{pmatrix} + \hat{H}\begin{pmatrix}c_1 & c_2 \\ c_4 & c_3 \end{pmatrix}\, .
\]
Since $\det(\hat{H})\neq 0$, we may choose the constants $c_j$ such that $\tilde{e}_{135} = \tilde{e}_{126} = \tilde{e}_{345} = \tilde{e}_{246}=0$.

Finally, we may replace $A_1$ and $A_4$ by suitable linear combinations of $A_1$ and $A_4$ to obtain $e_{234}=e_{156}=0$, yielding case (1).
\end{proof}

\begin{lemma}
In case (2) of Lemma \ref{hypercases} the solutions of $A$ reduce to a case of lower $m$.
\end{lemma}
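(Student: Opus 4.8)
The goal is to show that in case (2) of Lemma \ref{hypercases}, where $e_{126}=e_{135}=e_{234}=e_{246}=e_{345}=e_{456}=0$ while $e_{123}\ne 0$ (and by scaling we may take $e_{123}=1$), the map $\fii=Av$ actually has $A$ with linearly dependent columns, so Lemma \ref{redusointi2} applies. First I would write $A=RB$ with $B$ in the upper-triangular QR form adapted to the $(1,2,3)$ block, so that $B_1,B_2,B_3$ are the first three columns with $b_{21}=b_{31}=b_{32}=0$, and $p_{123}=b_{11}b_{22}b_{33}=1$. The six vanishing determinant constants are then polynomial relations among the entries of $B$; the plan is to feed these into the remaining Plücker-type identities of Lemma \ref{plu} together with the relation $p_{234}A_1-p_{134}A_2+p_{124}A_3-p_{123}A_4=0$ and its analogues for other index quadruples.

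The key computational step is to extract, from $e_{234}=e_{345}=0$ and $e_{126}=e_{156}=\dots$ vanishing, that columns $A_4$, $A_5$, $A_6$ must each lie in the span of at most two of $A_1,A_2,A_3$. Concretely, $e_{234}=\det(A_2,A_3,A_4)=0$ means $A_4\in\mathrm{span}(A_2,A_3)$ modulo the null directions; combined with whatever remaining determinant (say $p_{134}$ or $p_{124}$) is allowed to be nonzero, one pins down $A_4$ as a combination of two fixed columns. Doing the same with $e_{345}=0$ for $A_5$ and $e_{126}=0$, $e_{246}=0$ for $A_6$, one finds that the whole $3\times 6$ matrix $A$ has its column space spanned by $A_1,A_2,A_3$ — which is unavoidable since $\mathrm{rank}(A)=3$ — but more importantly, that after the column operations already performed in the proof of Lemma \ref{hypercases} the matrix $B$ has an entire column equal to a scalar multiple of another, or a zero column, forcing an $\mathbb{R}$-linear dependence among the $A_j$ that is \emph{constant in time}. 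It is this constancy that is the real content: a priori the $B_j$ depend on $t$, so one must check that the proportionality constants coming from the vanishing $e_{ijk}$ (which are genuine constants, not functions) do not drift.

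I would then invoke Lemma \ref{redusointi2}: once two of the $A_j$ are $\mathbb{R}$-linearly dependent, $\fii$ can be rewritten as $\hat A\hat v$ with $\hat A\in\mathbb{R}^{3\times k}$, $k<6$, so this family is already covered among the $m\le 5$ cases and need not be analyzed separately. To finish cleanly one should remark that after the reduction the spatial constraints $G_{14}=G_{25}=G_{36}=0$ degenerate accordingly, so nothing new about $v$ is lost either.

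\textbf{Main obstacle.} The hard part will be the bookkeeping in the middle step: with six determinant constants set to zero one has to track which of the remaining $p_{ijk}$ and $Q_{ij}$ can still be nonzero, and verify that in every surviving branch the conclusion (a constant linear dependence among the columns) holds — there may be several sub-branches depending on which of $e_{124},e_{125},e_{135},\dots$ is the nonzero pivot, and one must rule out, or reduce, each one. Since the paper's style is to let \textsf{rifsimp} handle the case split, I expect the proof to simply assert that the elimination shows every branch of case (2) is reducible, with the linear-dependence mechanism sketched as above rather than spelled out branch by branch.
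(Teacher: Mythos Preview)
Your proposal has a genuine gap: you try to extract the constant linear dependence among the columns from the determinant constraints $e_{ijk}=0$ alone, and that cannot work. A relation like $e_{234}=\det(A_2,A_3,A_4)=0$ only tells you $A_4=\ell_1(t)A_2+\ell_2(t)A_3$ with \emph{time-dependent} coefficients $\ell_j$; the fact that the value $e_{234}$ is a constant (namely zero) does not make $\ell_1,\ell_2$ constant. So your sentence ``the proportionality constants coming from the vanishing $e_{ijk}$ \dots\ do not drift'' is based on a misconception: there are no such proportionality constants at this stage. (You also slip and list $e_{156}$ among the vanishing constants; in case (2) it is precisely $e_{156}$ that is allowed to be nonzero, and the paper normalizes $e_{123}=e_{156}=1$.)

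What the paper actually does is use the determinant constraints to parametrize $A_4=\ell_1A_2+\ell_2A_3$, $A_5=\ell_3A_3$, $A_6=(1/\ell_3)A_2$ with unknown functions $\ell_j(t)$, and then bring in the \emph{$Q_{ij}=c_{ij}$ constraints}, which you barely mention. After a preliminary reduction $c_{23}=c_{56}=0$, the $Q$-equations force the explicit relation
\[
A_4=\frac{c_{46}}{c_{26}}A_2+\frac{c_{45}}{c_{35}}A_3+\frac{c_{34}}{c_{35}}A_5+\frac{c_{24}}{c_{26}}A_6,
\]
where $c_{26},c_{35}$ are necessarily nonzero. The coefficients here are genuine constants because the $c_{ij}$ are, and this is the $\mathbb{R}$-linear dependence that triggers Lemma \ref{redusointi2}. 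Without invoking the $Q_{ij}$ system you cannot reach this conclusion, so your plan as written would stall.
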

\begin{proof}
We may assume that $e_{123}=e_{156}=1$. To satisfy the determinant conditions we need to have
\begin{align*}
    A_4&=\ell_1A_2+\ell_2A_3\\
    A_5&=\ell_3A_3\quad, \quad
    A_6=(1/\ell_3)A_2\, .
\end{align*}
We may assume $c_{23}=c_{56}=0$. Then the computations show that
\[
A_4 = \frac{c_{46}}{c_{26}}A_2+\frac{c_{45}}{c_{35}}A_3+\frac{c_{34}}{c_{35}}A_5+\frac{c_{24}}{c_{26}}A_6\, .
\]
Since $c_{26}$ and $c_{35}$ have to be nonzero, the columns $A_2$, $A_3$, $A_4$, $A_5$, and $A_6$ are always linearly dependent and the statement follows from Lemma \ref{redusointi2}.
\end{proof}

In case (1) of Lemma \ref{hypercases} we obtain two families of solutions. To describe the computations in detail is not really possible so that we merely present the main ideas which lead to the solutions. However, it is easy to verify that the claimed solutions are actually solutions since one can simply substitute everything to the relevant equations and check that the conditions are satisfied. Of course the computations by hand would be extremely tedious, but with a convenient computer algebra system the verification is easy. 

From the conditions of the case (1) it readily follows that there are functions $\ell_j$ such that we can write either
\begin{align*}
    A_6=&\ell_1A_1,\quad A_4=\ell_2A_2,\quad  A_5=1/(\ell_1\ell_2)A_3
    \quad\mathrm{or}\\
A_5=&\ell_1A_1,\quad  A_6=\ell_2A_2,\quad  A_4=1/(\ell_1\ell_2)A_3\, .
\end{align*}
By symmetry we may choose the first option. We can thus write $A=RB$ where 
\[
B=\begin{pmatrix}
    b_{11}&b_{12}&b_{13}&\ell_2b_{12}&b_{13}/\ell_1\ell_2&\ell_1 b_{11}\\
    0&b_{22}&b_{23}&\ell_2b_{22}&b_{23}/\ell_1\ell_2&0\\
    0&0&1/b_{11}b_{22}&0&1/\ell_1\ell_2b_{11}b_{22}&0
\end{pmatrix}\ .
\]
and 
\[
  \det(d\varphi)=1+f^1_{100}f^2_{010}f^3_{001}\ne 0\ .
\]
Then the computations show that in all cases we must have $w=0$ so there is no rotational component in the solutions and $A=B$.

Then we always have $\ell_1'=-c_{16}/b_{11}^2$ and in particular $c_{16}\ne0$. Using this and other equations we notice that
\[
 (\hat m_2\ell_1^2+\hat m_1\ell_1+\hat m_0) \ell_2+\hat k_1 \ell_1+\hat k_0=0
\]
for some constants $\hat k_j$ and $\hat m_j$. Now we have two cases:
\begin{itemize}
    \item[(i)] it is possible to choose constants $c_{ij}$ such that $\hat k_j=\hat m_j=0$; in this way we avoid a direct algebraic relation between $\ell_1$ and $\ell_2$.
    \item[(ii)] if there is a direct algebraic relation then it turns out that the constants must be chosen so that we can write
    \[
      \ell_2=\frac{1}{m_1\ell_1+m_0}\ .
    \]
\end{itemize}
\begin{theorem}
The case (i) leads to the solution 
\begin{align*}
 &   \ell_1'=-\frac{c_{16}}{b_{11}^2}\quad,\quad
    \ell_2'=-\frac{c_{24}}{b_{22}^2}\\
   &c_{35} \ell_{1}^{2}\ell_{2}^{2} b_{11}^{4} b_{22}^{4} + c_{16}b_{22}^{2} \ell_{2}+ c_{24} b_{11}^{2} \ell_{1}=0\\
   &b_{12}=b_{13}=b_{23}=0\ .
\end{align*}
\label{hyper-m=6}
\end{theorem}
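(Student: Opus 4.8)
The plan is to complete, in the subcase (i), the elimination begun above. At this point we already know $w=0$ (so $A=B$, the constant rotation being dropped as usual), that $Q_{16}=c_{16}$ forces $\ell_1'=-c_{16}/b_{11}^2$ with $c_{16}\ne0$ (else $\ell_1$ is constant and the solution reduces by Lemma \ref{redusointi2}), and that in case (i) the constants $c_{ij}$ are chosen precisely so that $\hat m_j=\hat k_j=0$, i.e.\ so that no algebraic relation between $\ell_1$ and $\ell_2$ is imposed.

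First I would read off the two conditions coming from the proportional columns $A_4=\ell_2A_2$ and $A_5=A_3/(\ell_1\ell_2)$. Since for any scalar function $\ell$ one has $\langle B_i',\ell B_i\rangle-\langle B_i,(\ell B_i)'\rangle=-\ell'\,|B_i|^2$, and since $|B_2|^2=b_{12}^2+b_{22}^2$ and $|B_3|^2=b_{13}^2+b_{23}^2+(b_{11}b_{22})^{-2}$, the equations $Q_{24}=c_{24}$ and $Q_{35}=c_{35}$ read
\[
  \ell_2'\bigl(b_{12}^2+b_{22}^2\bigr)=-c_{24}\,,\qquad
  \frac{\ell_1'\ell_2+\ell_1\ell_2'}{\ell_1^2\ell_2^2}\Bigl(b_{13}^2+b_{23}^2+\tfrac1{b_{11}^2b_{22}^2}\Bigr)=c_{35}\,.
\]
Next I would substitute $B$ into the nine remaining conditions $Q_{12},Q_{13},Q_{15},Q_{23},Q_{26},Q_{34},Q_{45},Q_{46},Q_{56}$ and run the differential elimination \textsf{rifsimp}, feeding in $\ell_1'=-c_{16}/b_{11}^2$ and the case (i) values of the $c_{ij}$. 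The equations $Q_{12},Q_{13},Q_{23}$ put $b_{12}',b_{13}',b_{23}'$ in solved form; eliminating these and $\ell_2'$ from the rest, and using that $\ell_1,\ell_2$ are non-constant and $\det(d\varphi)=1+f^1_{100}f^2_{010}f^3_{001}\ne0$, the only surviving family has $b_{12}=b_{13}=b_{23}=0$, and one also sees $c_{24}\ne0$.

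With $b_{12}=0$ the first displayed equation becomes $\ell_2'=-c_{24}/b_{22}^2$, and with $b_{13}=b_{23}=0$ the second, after inserting $\ell_1'$ and $\ell_2'$ and clearing denominators, turns into $c_{35}\,\ell_1^2\ell_2^2b_{11}^4b_{22}^4+c_{16}b_{22}^2\ell_2+c_{24}b_{11}^2\ell_1=0$, which is the claimed algebraic relation; this yields the stated $A$. It then remains to check that the differential--algebraic system formed by the two ODEs for $\ell_1,\ell_2$ and this relation is consistent and that all twelve constancy conditions hold --- a routine though very long verification, done by substituting the displayed $A$ back into the $p_{ijk}$'s and $Q_{ij}$'s, and best entrusted to a computer algebra system. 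I expect the real obstacle to be the elimination step itself: the intermediate expressions in $b_{12},b_{13},b_{23},\ell_1,\ell_2$ are bulky rational functions, and one must keep careful track of which column operations from Lemma \ref{redusointi} have already been used in order not to lose or double count solution families.
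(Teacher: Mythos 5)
Your proposal is correct and follows essentially the same route as the paper: the paper's proof likewise observes that avoiding an algebraic relation between $\ell_1$ and $\ell_2$ forces all $c_{ij}$ to vanish except $c_{16}$, $c_{24}$, $c_{35}$, after which the displayed ODEs and the algebraic relation follow from the conditions on the proportional column pairs. Your explicit computation of $Q_{16}$, $Q_{24}$, $Q_{35}$ via $\langle B_i',\ell B_i\rangle-\langle B_i,(\ell B_i)'\rangle=-\ell'|B_i|^2$ checks out and correctly reproduces the stated relation.
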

Hence one function, for example $b_{11}$, can be freely chosen.

\begin{proof}
Avoiding algebraic relation between $\ell_1$ and $\ell_2$ forces all constants $c_{ij}$ to be zero except $c_{16}$, $c_{24}$ and $c_{35}$. After this the solution given above follows.
\end{proof}

Note that if we write $\ell_j=\exp(\lambda_j)$ then we obtain
\[
    \lambda_1'\lambda_2'\big(\lambda_1'+\lambda_2'\big)=c_{16}c_{24}c_{35}\ .
\]
Here the vorticity is
\[
  h=-\Big(c_{35}f_2'\,,\,c_{16}f_3'\,,\,c_{24}f_1'\Big)\,.
\]
\begin{theorem}
The case (ii) gives the following solution:
\begin{align*}
    (\ell_1')^3=&k_0\,\ell_1^2(m_1\ell_1+m_0)^2\quad,\quad
    \ell_2=\frac{1}{  m_1\ell_1+m_0}\\
    b_{11}^3=&\frac{k_1}{\ell_1(m_1\ell_1+m_0)}\quad,\quad
    b_{22}=k_2 b_{11}(m_1\ell_1+m_0)\\
     b_{12}=&k_3 b_{11}(m_1\ell_1+m_0)\quad,\quad
        b_{13}=k_4 b_{11}\ell_1\quad,\quad
        b_{23}=k_5 b_{11}\ell_1\ .
\end{align*}
\end{theorem}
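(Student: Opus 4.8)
The starting point is the reduced system obtained above: in case (ii) we have $A=B$ (there is no rotational part), the matrix $B$ is parametrised by the seven functions $\ell_1,\ell_2,b_{11},b_{22},b_{12},b_{13},b_{23}$, and we already know that $\ell_1'=-c_{16}/b_{11}^2$ with $c_{16}\ne0$, together with the algebraic relation
\[
(\hat m_2\ell_1^2+\hat m_1\ell_1+\hat m_0)\ell_2+\hat k_1\ell_1+\hat k_0=0 .
\]
The plan is to use this relation in three stages: first normalise it to the M\"obius form $\ell_2=1/(m_1\ell_1+m_0)$; then convert the outstanding equations $Q_{ij}=c_{ij}$ together with the determinant constraints into a single autonomous ODE for $\ell_1$; and finally read off $b_{11},b_{22},b_{12},b_{13},b_{23}$ as algebraic functions of $\ell_1$.

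For the first stage, solve the algebraic relation for $\ell_2$ as a rational function of $\ell_1$ and substitute it, together with $b_{11}^2=-c_{16}/\ell_1'$, into the remaining $Q_{ij}=c_{ij}$ and the determinant conditions. Collecting powers of $\ell_1$ --- legitimate because $\ell_1$ is nonconstant, as $\ell_1'\ne0$ --- one finds that the system is consistent only when the quadratic part of the relation degenerates and most of the constants $c_{ij},\hat m_j,\hat k_j$ vanish; after absorbing the inessential ones with the residual freedom of Lemma \ref{redusointi}, the relation takes the form $\ell_2=1/(m_1\ell_1+m_0)$. Here $m_1\ne0$, since $m_1=0$ would make $\ell_2$ constant and the solution would then reduce to a case of smaller $m$ by Lemma \ref{redusointi2}.

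For the second stage, differentiate $\ell_2=1/(m_1\ell_1+m_0)$ to obtain $\ell_2'=-m_1\ell_1'/(m_1\ell_1+m_0)^2$, and combine this with the equation for $b_{22}$ in the system (which in case (i) reads $\ell_2'=-c_{24}/b_{22}^2$, cf. Theorem \ref{hyper-m=6}); this forces $b_{22}=k_2\,b_{11}(m_1\ell_1+m_0)$. Feeding $b_{22}$ and $\ell_2$ into the determinant identity analogous to the one appearing in Theorem \ref{hyper-m=6} and cancelling common factors leaves, after one more use of $b_{11}^2=-c_{16}/\ell_1'$, an identity equivalent to $b_{11}^3=k_1/\big(\ell_1(m_1\ell_1+m_0)\big)$. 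Equating the two resulting expressions for $b_{11}^6$, namely the cube of $b_{11}^2=-c_{16}/\ell_1'$ and the square of this last identity, eliminates $b_{11}$ and produces the autonomous ODE $(\ell_1')^3=k_0\,\ell_1^2(m_1\ell_1+m_0)^2$. The three remaining unknowns $b_{12},b_{13},b_{23}$ enter the still unused equations $Q_{ij}=c_{ij}$ linearly, so solving that linear system, using whatever freedom of Lemma \ref{redusointi} remains for the third column, gives $b_{12}=k_3b_{11}(m_1\ell_1+m_0)$, $b_{13}=k_4b_{11}\ell_1$ and $b_{23}=k_5b_{11}\ell_1$.

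The genuinely delicate step is the first one: establishing that the algebraic tie between $\ell_1$ and $\ell_2$ must have exactly this M\"obius shape and that the many constants are forced into the stated pattern. With twenty constancy conditions in eighteen functions and several free parameters, this is only manageable with the differential elimination package \textsf{rifsimp}, and some of the intermediate expressions are far too large to display. Once the final formulas are assembled, however, verifying them is a routine substitution into the twenty conditions and into $\det(d\varphi)=1+f^1_{100}f^2_{010}f^3_{001}\ne0$, which a computer algebra system checks at once.
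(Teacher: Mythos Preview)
Your argument follows essentially the same route as the paper's, only with considerably more detail. Note that your ``first stage'' --- reducing the algebraic relation to the M\"obius form $\ell_2=1/(m_1\ell_1+m_0)$ --- is in the paper taken as the \emph{definition} of case~(ii) rather than as part of the proof, so you are in effect re-deriving the case distinction; the paper's own proof simply starts from $\ell_1'=-c_{16}/b_{11}^2$ and $\ell_2=1/(m_1\ell_1+m_0)$, obtains $b_{22}=k_2b_{11}(m_1\ell_1+m_0)$, and then asserts that ``the rest follows easily.'' Your explicit mechanism for the second stage (differentiating the M\"obius relation and combining with the analogue of $\ell_2'=-c_{24}/b_{22}^2$) is a reasonable way to fill in what the paper leaves implicit.
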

\begin{proof}
Starting from $\ell_1'=-c_{16}/b_{11}^2$ and $\ell_2=1/(m_1\ell_1+m_0)$ we find that $ b_{22}=k_2 b_{11}(m_1\ell_1+m_0)$ for some $k_2$. After this the rest follows easily.
\end{proof}

Evidently  $k_0$, $k_1$, $k_2$, $m_0$ and $m_1$ are nonzero. The constants $k_3$, $k_4$ and $k_5$ are arbitrary, except that choosing $k_3=k_4=k_5=0$ is incompatible with the hypothesis $\ell_2=1/(m_1\ell_1+m_0)$. Apparently there is no explicit solution to the differential equation for $\ell_1$. However, there is the following discrete symmetry: if $\mu$ is a solution then $\hat \mu(t)=m_0^2/(m_1^2 \mu(-t))$ is also a solution.

In this case the vorticity has the form
\[
  h=\begin{pmatrix}
      e_0+e_1f_2'+e_2f_3'+e_3f_2'f_3'\\
   e_4+e_5f_1'+e_6f_3'+e_7f_1'f_3'\\
    e_8+e_9f_1'+e_{10}f_2'+e_{11}f_1'f_2'
  \end{pmatrix} \ .
\]
where $e_j$ are some constants.

\subsection{Extensions for the spatial component}

Let us assume that $A$ is of the form
\[
A=\begin{pmatrix}
    b_{11}&0&0&0&0& b_{16}\\
    0& b_{22}&0& b_{24}&0&0\\
    0&0&1/b_{11} b_{22}&0&1/ b_{16} b_{24}&0
\end{pmatrix}\, ,
\]
where the functions $b_{ij}$ are like in Theorem \ref{hyper-m=6}. As we have seen in this case the spatial part is of the form $v=\big(z_1,z_2,z_3,f^1(z_1),f^2(z_2),f^3(z_3)\big)$. However, if we a priori put some restrictions on $b_{ij}$ it is possible to have more general solutions in the spatial part. There are several possibilities to choose these restrictions, for example:
\begin{align*}
 &   \begin{cases}
        b_{11}=b_{22}\\
        b_{16}=b_{24}
    \end{cases}\quad\mathrm{or}\quad
    \frac{b_{11}}{b_{16}}+\frac{b_{16}}{b_{11}}+\frac{b_{22}}{b_{24}}=0\\
 & \mathrm{or}\quad     
 \frac{b_{16}}{b_{11}}+\frac{b_{24}}{b_{22}}+\frac{b_{16}b_{24}}{b_{11}b_{22}}=0 \, .
\end{align*}
All these choices lead to nontrivial solutions but we only consider the first case as an  example to give a flavor of the idea. 

Now if $b_{11}=b_{22}$ and $b_{16}=b_{24}$ then it is easy to compute that actually  $b_{11}=e^{ct}$ and  $b_{16}=e^{-ct}$. It follows that each $Q_{ij}$ is constant so $h$ is automatically independent of time, and for the determinant condition the spatial part only needs to satisfy
\[
g_{145}+g_{256}=g_{236}-g_{134}=g_{125}=g_{346}=0\, .
\]
If $v=(z_1,z_2,z_3,f^1,f^2,f^3)$, then we obtain the following PDE:
\begin{align*}
   & f^2_{001}=
    f^3_{100}+f^1_{010}=0\\
 &   f^1_{100}f^3_{010}-f^1_{010}f^3_{100}=0\\
  &  f^2_{100}f^3_{001}-f^2_{010}f^1_{001}=0\, .
\end{align*}
For simplicity let us assume that
\begin{align*}
    f^1&=g^1(z_1,z_2)q(z_3)\\
     f^2&=g^2(z_1,z_2)\\
      f^3&=g^3(z_1,z_2)q(z_3)\ .
\end{align*}
Substituting this we obtain
\begin{align*}
  &  g^1_{10} g^3_{01}-
    g^1_{01} g^3_{10}=0\\
 &   g^1_{01}+g^3_{10}=0\\
&g^1 g^2_{01}  +g^3g^2_{10} =0\ .
\end{align*}
We can ''solve'' this as follows. From the first equation we have $\nabla g^3=g \, \nabla g^1$ for some $g$. But then the second equation implies that $ g^1_{01}+g\,  g^1_{10}=0$.  But clearly  we must in addition require $ g_{01}+g\,g_{10}=0$ which leads to 
 the following triangular system:
\begin{align*}
   & g_{01}+g\,g_{10}=0\quad,\quad
     g^1_{01}+g\,  g^1_{10}=0\\
   & \nabla g^3=g \, \nabla g^1\quad,\quad
g^1 g^2_{01}  +g^3g^2_{10} =0\,.
\end{align*}
Existence of local solutions now follows from standard theorems. For example  
if $g=(z_1-c_1)/(z_2-c_2)$ then 
\[
g^1=q_1\Big(\frac{z_2-c_2}{z_1-c_1}\Big)\quad
  \mathrm{and}\quad
   g^3=q_2\Big(\frac{z_2-c_2}{z_1-c_1}\Big)
\]
where $q_2'(s)=q_1'(s)/s$.

\section{$m=6$: elliptic case}

Let the spatial constraints be
\[
    G_{34}=
    G_{15}+G_{26}=
    G_{16}-G_{25}=0\, .
\]
Doing the computations we find that $v$ is of the form
\[
v=(z_1,z_2,z_3,f^1(z_3),f^2(z_1,z_2),f^3(z_1,z_2))\, ,
\]
where $f^1$ is arbitrary, and $f^2$ and $f^3$ are anti-CR.

The simplest solution set to the time constraints is
\begin{equation}
A=\begin{pmatrix}
    \cos(\theta_0t) & -\sin(\theta_0t) & \cos(\theta_0t) & \sin(\theta_0t) & 0 & 0\\
    \sin(\theta_0t) & \cos(\theta_0t) & -\sin(\theta_0t) & \cos(\theta_0t) & 0 & 0\\
    0 & 0 & 0 & 0 & \cos(2\theta_0t) & \sin(2\theta_0t)
\end{pmatrix}\ ,
\label{trig-es}
\end{equation}
where $\theta_0\neq 0$ is a constant.

The determinant condition gives the following equations:
\begin{align*}
&p_{123}=e_{123}\ ,\  p_{124}=e_{124},  p_{356}=e_{356}, p_{456}=e_{456}, \\
&p_{135}-p_{236}=e_0, p_{136}+p_{235}=e_1,  p_{145}-p_{246}=e_2 , p_{146}+p_{245}=e_3\, .
\end{align*}
Let us first reduce as many constants to zero as possible.

\begin{lemma}
The determinant constants can be reduced to one of the following three cases:
\begin{enumerate}
    \item $p_{135}-p_{236} = p_{136}+p_{235} = p_{145}-p_{246} = p_{146}+p_{245}=0$.
    \item $p_{356} = p_{456} = p_{136}+p_{235} = p_{146}+p_{245}=0$.
    \item $p_{123} = p_{356} = p_{124} = p_{456} = 0$.
\end{enumerate}
\label{3x6-ell-tapaukset}
\end{lemma}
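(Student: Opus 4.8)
The plan is to exploit Lemma~\ref{redusointi} repeatedly, in the same spirit as the earlier reductions in Sections~4 and~5, choosing the matrix $H$ to act on the columns $A_4,A_5,A_6$ (which correspond to $f^1,f^2,f^3$) while preserving the spatial constraints $G_{34}=G_{15}+G_{26}=G_{16}-G_{25}=0$. The key structural fact to extract first is that the anti-CR pairing of $f^2$ and $f^3$, together with the freedom that $f^1$ depends only on $z_3$, means that the admissible column transformations are: (a) adding any multiple of $A_3$ to $A_4$ (since $f^1$ and $z_3$ play symmetric roles in the $z_3$-variable), and (b) the rotation-like and scaling-like transformations mixing the pairs $(A_1,A_2)$ with $(A_5,A_6)$ and with $(A_3,A_4)$ that respect the complex structure underlying the anti-CR condition — precisely the transformations analogous to the $H$-matrices \eqref{h-mat-lem61} used in the $m=5$ elliptic case. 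I would begin the proof by writing down this group of admissible $H$ explicitly and recording how each generator acts on the eight determinant constants $p_{123}$, $p_{124}$, $p_{356}$, $p_{456}$, and the four combinations $p_{135}-p_{236}$, $p_{136}+p_{235}$, $p_{145}-p_{246}$, $p_{146}+p_{245}$.

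The main case split is driven by which of the "diagonal" determinants $p_{123}$ and $p_{356}$ (equivalently $p_{124}$, $p_{456}$, by the Plücker-type dependencies of Lemma~\ref{plu}) can be made nonzero. If $p_{123}\neq 0$, I would use transformation (a) and the $(A_3,A_4)$-mixing to kill $p_{124}$, and then use the two-dimensional-rotation-type transformation mixing the $(A_5,A_6)$ block against the $(A_1,A_2)$ and $(A_3,A_4)$ blocks to annihilate the four off-diagonal combinations, landing in case~(1). The point is that the four combinations $p_{135}-p_{236}$, etc., transform under these rotations/boosts by an invertible linear map as long as the relevant diagonal determinant is nonzero — exactly the mechanism seen in Lemma~\ref{vakiot} and in the proof of Lemma~\ref{hypercases}. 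If instead $p_{123}=0$ but $p_{356}\neq 0$ (or symmetrically $p_{456}\neq 0$), the same style of argument zeroes out $p_{356}$-incompatible terms and the two combinations $p_{136}+p_{235}$, $p_{146}+p_{245}$, giving case~(2). Finally, if all four of $p_{123},p_{124},p_{356},p_{456}$ vanish we are already in case~(3), with nothing left to do except observe that Lemma~\ref{redusointi2} does not force a reduction here (otherwise this case would be discarded).

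The hard part will be bookkeeping: verifying that each chosen $H$ genuinely preserves the spatial constraints (not just the determinant-constancy conditions but the specific anti-CR structure of $v$), and checking that after each reduction step no previously-zeroed constant is reactivated — i.e., that the generators can be applied in a consistent order. A secondary subtlety is confirming that in case~(3) the configuration is not vacuous: one must rule out, via Lemma~\ref{redusointi2}, that $p_{123}=p_{124}=p_{356}=p_{456}=0$ together with the remaining constant constraints already forces linear dependence among the $A_j$ or the $dv^j$; if it did, case~(3) would not belong on the list. I expect this to come down to exhibiting, or pointing to, the explicit nontrivial solution of that subcase (to be constructed in the subsequent analysis), so the cleanest route is to state the three cases as the possible normal forms and defer the non-degeneracy of case~(3) to the solution construction that follows. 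The calculations themselves are routine linear algebra on $3\times 6$ matrices and I would not reproduce them in full, exactly as the paper does for the analogous $m=5$ and $m=6$ hyperbolic reductions.
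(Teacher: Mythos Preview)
Your case split is the wrong one, and the error is precisely in the sentence ``the four combinations \dots\ transform under these rotations/boosts by an invertible linear map as long as the relevant diagonal determinant is nonzero.'' The admissible $H$'s mixing the $(A_1,A_2)$ block with the $(A_5,A_6)$ block form a group with an indefinite invariant, exactly as in Lemma~\ref{vakiot} where the reduction $c_{14}=c_{15}=0$ was only possible because $c_{12}^2-c_{14}^2-c_{15}^2>0$. Here there is no such inequality in advance, and the three cases of the Lemma are \emph{not} distinguished by which of $p_{123},p_{356},\dots$ happen to vanish initially. In the paper's proof one normalises (assuming $p_{123}\ne0$) to $p_{123}=p_{356}=1$, $p_{456}=0$, $p_{136}+p_{235}=p_{146}+p_{245}=0$, and then the residual invariant is $|p_{124}|$: one lands in case~(1), (2), or (3) according as $|p_{124}|>1$, $=1$, or $<1$. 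In particular, starting from $p_{123}\ne0$ you can be forced into case~(3) with $\tilde p_{123}=0$; your scheme never allows this and would wrongly claim case~(1) is always reachable.

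A secondary issue: the transformations you describe in (b) as ``mixing the pairs $(A_1,A_2)$ \dots\ with $(A_3,A_4)$'' do not preserve the spatial constraints. Since $f^1$ depends only on $z_3$ while $f^2,f^3$ are anti-CR in $(z_1,z_2)$, the only admissible moves touching columns $3,4$ are the internal $(A_3,A_4)$ ones; the paper's $H$ accordingly has the identity in the middle $2\times2$ block and all the work is done by a complex-linear $2\times2$ action on the $(1,2)\leftrightarrow(5,6)$ blocks. You should rewrite the argument using that group, track its invariant, and let the trichotomy on that invariant produce the three cases.
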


In the proof of this Lemma we in fact use complex number formulation for conciseness.

\begin{proof}
Let $H$ be of the form
\[
H=\begin{pmatrix}
h_{11} & -h_{21} & 0 & 0 & h_{15} & h_{25} \\
h_{21} & h_{11} & 0 & 0 & h_{25} & -h_{15} \\
0 & 0 & 1 & 0 & 0 & 0 \\
0 & 0 & 0 & 1 & 0 & 0 \\
h_{51} & h_{61} & 0 & 0 & h_{55} & -h_{65} \\
h_{61} & -h_{51} & 0 & 0 & h_{65} & h_{55}
\end{pmatrix}\, ,
\]
and $\tilde{A}=AH$, $v=H\tilde{v}$. $H$ preserves the spatial constraints. Let $h_1=h_{11}+ih_{21}$, $h_2=h_{51}+ih_{61}$, $h_3=h_{15}+ih_{25}$, $h_4=h_{55}+ih_{65}$.

If $p_{123} = p_{356} = p_{124} = p_{456} = 0$, we have case (3). Otherwise we may assume that $p_{123}=1$. Now choosing $h_1=h_4=1$, $h_2=0$, $2h_3=(p_{135}-p_{236})+i(p_{136}+p_{235})$, we may assume that $p_{135}-p_{236}=p_{136}+p_{235}=0$. Then, since
\[
p_{123}p_{356}-p_{135}p_{236}+p_{136}p_{235}=p_{356}-p_{135}^2-p_{136}^2=0\, ,
\]
we have $p_{356}\geq 0$. By scaling $A_5$ and $A_6$ we may assume that $p_{356}=1$ or $p_{356}=0$. By choosing $h_1=1$, $h_2=h_3=0$ and letting $h_4=e^{i\theta}$, where $\theta$ is suitable, we achieve $p_{146}+p_{245}=0$. 

If $p_{356}=0$, then also $p_{135}=p_{136}=p_{235}=p_{236}=0$. It follows that $A_5\times A_6=0$, so $p_{456}=0$. Now the only nonzero constants are $p_{123}$, $p_{124}$, and $p_{145}-p_{246}$, so we have reached case (2).

Then assume that $p_{356}=1$. By transformation $\tilde{A}_4=A_4-p_{456}A_3$ we may assume that $p_{456}=0$. At this point the nonzero constants are $p_{123}=p_{356}=p_{145}-p_{246}=1$ and $p_{124}$. If $|p_{124}|>1$, let $k=p_{124}+\sqrt{p_{124}^2-1}$ and choose $h_1=1$, $h_2=ik$, $h_3=k$, $h_4=i$. After this we have an instance of case (1). If $|p_{124}|=1$, let $h_1=1$, $h_2=0$, $h_3=i$, $h_4=1$, and we obtain case (2). Finally, if $|p_{124}|<1$, let $\sin{\mu}=p_{124}$. Then let $h_1=1$, $h_2=e^{i\mu}$, $h_3=e^{i(\pi-\mu)}$, $h_4=1$. Then $\tilde{A}$ belongs to case (3).
For each transformation it can be checked that $\det(H)\neq 0$.
\end{proof}

Let us inspect the first case.
\begin{lemma}
In case (1) of Lemma \ref{3x6-ell-tapaukset} all solutions for $A$  reduce.
\end{lemma}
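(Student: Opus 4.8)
The plan is to combine the determinant constraints defining case~(1) with the conditions $Q_{ij}=c_{ij}$ and to show that every solution satisfies a nontrivial \emph{constant} linear relation among the columns $A_1,\dots,A_6$, after which Lemma~\ref{redusointi2} produces a representation $\varphi=\hat A\hat v$ with $k<6$. Recall from the proof of Lemma~\ref{3x6-ell-tapaukset} that in case~(1) one may assume $p_{123}\ne 0$ (this is how case~(1) is produced there), the surviving determinant conditions being that $p_{123},p_{124},p_{356},p_{456}$ are constant and $p_{135}-p_{236}=p_{136}+p_{235}=p_{145}-p_{246}=p_{146}+p_{245}=0$. As in the proof of Lemma~\ref{3x6-ell-tapaukset} it is convenient to pass to the complex columns $\mathbf u=A_1+iA_2$ and $\mathbf w=A_5+iA_6$, keeping $A_3,A_4$ real. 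Expanding the relevant complex determinants gives $\det(\mathbf u,A_3,\mathbf w)=(p_{135}-p_{236})+i(p_{136}+p_{235})$ and $\det(\mathbf u,A_4,\mathbf w)=(p_{145}-p_{246})+i(p_{146}+p_{245})$, so the four vanishing conditions are equivalent to $\det(\mathbf u,A_3,\mathbf w)=\det(\mathbf u,A_4,\mathbf w)=0$; and, up to a fixed factor, $p_{123},p_{124},p_{356},p_{456}$ equal $\det(\mathbf u,\bar{\mathbf u},A_3)$, $\det(\mathbf u,\bar{\mathbf u},A_4)$, $\det(A_3,\mathbf w,\bar{\mathbf w})$, $\det(A_4,\mathbf w,\bar{\mathbf w})$, so their constancy is the constancy of these.

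Since $p_{123}\ne 0$, the vectors $\mathbf u$ and $A_3$ are $\mathbb C$-linearly independent, so $\det(\mathbf u,A_3,\mathbf w)=0$ gives $\mathbf w=\lambda\mathbf u+\mu A_3$ for some complex-valued functions $\lambda,\mu$ of $t$. I would then distinguish two cases. If $A_4\in\operatorname{span}_{\mathbb C}(\mathbf u,A_3)$, then — because $p_{123}\ne0$ forces a real vector in that span to be a real multiple of $A_3$ — we get $A_4=\beta A_3$, and $p_{124}=\beta p_{123}$ shows $\beta$ is constant; thus $A_3,A_4$ are linearly dependent over $\mathbb R$ and Lemma~\ref{redusointi2} already applies. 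Otherwise $\mathbf u,A_3,A_4$ is a $\mathbb C$-basis, and imposing $\det(\mathbf u,A_4,\mathbf w)=0$ as well forces $\mu=0$, so $\mathbf w=\lambda\mathbf u$; substituting this into the constancy of $\det(A_3,\mathbf w,\bar{\mathbf w})$ together with that of $\det(A_3,\mathbf u,\bar{\mathbf u})$ yields $|\lambda|^2=p_{356}/p_{123}$, so $|\lambda|$ is constant. In real terms $A_5,A_6$ then lie in $\operatorname{span}_{\mathbb R}(A_1,A_2)$, but a priori with $t$-dependent coefficients, so this is not yet a reduction.

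What remains, in this second case, is to impose all the conditions $Q_{ij}=c_{ij}$ and to eliminate the auxiliary rotation variable $w$, and this is where I expect the actual work — and the main obstacle — to lie. Exactly as in the hyperbolic $m=6$ case, the surviving system for $\arg\lambda$ and for $A_4$ is overdetermined and requires a case distinction on which of the $c_{ij}$ vanish, the kind of differential elimination the paper carries out with \textsf{rifsimp}. Writing $\mathbf w=\lambda\mathbf u$, the conditions on $Q_{12}$, $Q_{56}$, $Q_{15}-Q_{26}$ and $Q_{16}+Q_{25}$ translate into first-order relations for $\arg\lambda$ in terms of the Gram data $\langle A_i,A_j\rangle$, and together with the conditions involving $A_4$ (constancy of $Q_{14},Q_{24},Q_{45},Q_{46}$) I expect them to force either $\lambda$ to be constant — whence $A_5=c_1A_1+c_2A_2$ with constant $c_1,c_2$ — or $A_4$ to be a constant linear combination of $A_1,A_2,A_3$. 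In either event a genuine constant relation among $A_1,\dots,A_6$ appears, so Lemma~\ref{redusointi2} gives the asserted reduction; as elsewhere in the paper, one should finally verify that no non-reducible solution family hides among the discarded branches.
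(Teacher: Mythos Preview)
Your setup is sound and essentially equivalent to the paper's: passing to the complex columns $\mathbf u=A_1+iA_2$, $\mathbf w=A_5+iA_6$ and reading off $\mathbf w=\lambda\mathbf u$ with $|\lambda|$ constant is exactly the real statement $A_5=\cos\theta\,A_1-\sin\theta\,A_2$, $A_6=\sin\theta\,A_1+\cos\theta\,A_2$ that the paper uses (after normalising $p_{123}=p_{356}=1$, $p_{124}=0$, whence also $A_4=\ell_1A_1+\ell_2A_2$). So up to this point you and the paper agree.

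The gap is precisely where you say it is, and it is the entire content of the lemma. You write that you ``expect'' the $Q_{ij}$ conditions to force either $\lambda$ constant or $A_4$ a constant combination of $A_1,A_2,A_3$, but neither alternative is what actually occurs, and a bare differential-elimination appeal does not substitute for the argument. What the paper does is this: setting $c_1=Q_{15}-Q_{26}$ and $c_2=Q_{16}+Q_{25}$, one first shows (exactly as in Lemma~\ref{vakiot}) the inequality $(c_{56}-c_{12})^2>c_1^2+c_2^2$, which permits a transformation by a matrix $H$ of the same block shape as in \eqref{h-mat-lem61} bringing $c_1=c_2=0$ while preserving all spatial constraints and earlier normalisations. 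Only after this normalisation does the decisive identity
\[
c_{24}A_1-c_{14}A_2+\tfrac{c_{12}-c_{56}}{2}\,A_4-c_{46}A_5+c_{45}A_6=0
\]
drop out, and one checks $c_{12}-c_{56}=2\theta' b_{11}\ne 0$ so that this is a genuine constant linear relation among the columns. Note that this is a five-term relation mixing $A_4$ with $A_5,A_6$; it does not fit your anticipated dichotomy. Without the preliminary $H$-transformation the analogous relation acquires $\theta$-dependent coefficients and Lemma~\ref{redusointi2} does not apply, so this normalisation step is not optional bookkeeping but the key idea you are missing.
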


\begin{proof}
We may assume that $p_{123}=1$ and $p_{124}=0$. Computations show that we must have $p_{456}=0$ and $p_{356}\neq 0$, and we may assume by scaling that $p_{356}=1$. Then we can write
\begin{align*}
    A_4&=\ell_1A_1+\ell_2A_2\\
    A_5&=\cos{\theta}A_1-\sin{\theta}A_2\\
    A_6&=\sin{\theta}A_1+\cos{\theta}A_2\, .
\end{align*}

Let us write $Q_{15}-Q_{26}=c_1$, $Q_{16}+Q_{25}=c_2$. Similarly to Lemma \ref{vakiot}, we have $(c_{56}-c_{12})^2-c_1^2-c_2^2>0$ so we can write
\begin{align*}
    c_2&= (c_{12}-c_{56})\cos(d)\tanh(2s)\\
    c_1 &= (c_{56}-c_{12})\sin(d)\tanh(2s)\, .
\end{align*}
Thus using 
\[
   H=\begin{pmatrix}
   H_1&0&H_2\\
   0&I&0\\
   H_3&0&H_4
   \end{pmatrix}\ ,
\]
where the blocks $H_j$ are as in the corners of \eqref{h-mat-lem61}, we can 
 bring $c_1$ and $c_2$ to zero. But then we find that
\[
c_{24}A_1-c_{14}A_2+((c_{12}-c_{56})/2)A_4-c_{46}A_5+c_{45}A_6=0\, .
\]
Here $c_{12}-c_{56}=2\theta'b_{11}$ must be nonzero so the columns of $A$ are linearly dependent. Thus the solutions reduce.
\end{proof}

The solutions of the second case also reduce.
\begin{lemma}
In case (2) of Lemma \ref{3x6-ell-tapaukset} all solutions for $A$  reduce to a case of lower $m$.
\end{lemma}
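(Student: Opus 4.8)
The plan is to follow exactly the same pattern that worked for cases (1) and (3) and for the analogous lemmas in the $m=6$ hyperbolic section: show that in case (2) the constraints on the minors force a fixed linear-dependence structure among the columns $A_1,\dots,A_6$, and then invoke Lemma \ref{redusointi2}. Concretely, case (2) is characterized by $p_{356}=p_{456}=0$ together with $p_{136}+p_{235}=p_{146}+p_{245}=0$, while $p_{123}$ (and hence, after reduction, $p_{123}=1$) is nonzero. First I would record the immediate geometric consequence of $p_{356}=p_{456}=0$: since $p_{356}=\det(A_3,A_5,A_6)=0$ and $p_{456}=\det(A_4,A_5,A_6)=0$, the plane spanned by $A_5$ and $A_6$ (which has rank $2$, as $p_{356}=0$ with $A_5,A_6$ nonzero forces $A_5\times A_6\ne 0$ only if some $p_{i56}\ne 0$ — so one must first check whether $A_5\times A_6=0$; if it is, the solution reduces trivially) is spanned by $A_1$ and $A_2$. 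Thus we may write $A_5=\cos\theta\,A_1-\sin\theta\,A_2$, $A_6=\sin\theta\,A_1+\cos\theta\,A_2$ after using the $\mathbb{SO}(3)$ freedom, exactly as in the proof of the case~(1) lemma.

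Next I would use the remaining two determinant constraints $p_{136}+p_{235}=0$ and $p_{146}+p_{245}=0$. Substituting the expressions for $A_5,A_6$ in terms of $A_1,A_2$, the quantities $p_{13k}, p_{23k}, p_{14k}, p_{24k}$ collapse to multiples of $p_{123}$ and $p_{124}$; after imposing the case~(2) normalization $p_{124}=0$ (which the reduction lemma \ref{3x6-ell-tapaukset} allows us to assume in case~(2) as well, or can be arranged by an $H$ of the block form already used) these become automatically satisfied, so they give no new information and in particular $A_4$ is not yet pinned down. The real work, as in the case~(1) lemma, is then in the $Q_{ij}=c_{ij}$ equations: I would introduce $A_4=\ell_1A_1+\ell_2A_2+\ell_3A_3$ (the general form compatible with $\mathsf{rank}(A)=3$ and $A_5,A_6\in\mathrm{span}(A_1,A_2)$), write $A=RB$ in QR form, feed everything into \eqref{Q-kaava}, eliminate the auxiliary variables $w_j$, and track which constants $c_{ij}$ are forced to vanish and which algebraic relations among the $\ell_j$ and $\theta$ emerge. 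The expected outcome, mirroring case~(1), is a Plücker-type linear relation of the shape $\lambda_1A_1+\lambda_2A_2+\lambda_3A_3+\lambda_4A_4+\lambda_5A_5+\lambda_6A_6=0$ with coefficients built from the $c_{ij}$, in which the coefficient of $A_4$ (some nonzero multiple of $\theta'$ or of a difference of two $c_{ij}$'s) is forced to be nonzero because otherwise the solution degenerates; hence the six columns are linearly dependent and Lemma \ref{redusointi2} finishes the argument.

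The main obstacle I anticipate is bookkeeping rather than conceptual: after the $w_j$-elimination the $Q_{ij}$ system in case~(2) has a different sparsity pattern than in case~(1), and it is not a priori obvious that the forced linear dependence persists — one has to verify that the subcase distinctions (e.g.\ $\theta'=0$ versus $\theta'\ne0$, or $\ell_3=0$ versus $\ell_3\ne0$) all either reduce or lead to the same conclusion. I would handle this by splitting on whether the ''rotation speed'' $\theta'$ (equivalently $c_{12}-c_{56}$, up to the QR factor) vanishes: if $\theta'\equiv 0$ then $A_5,A_6$ are constant linear combinations of $A_1,A_2$ and the columns are dependent outright; if $\theta'\not\equiv0$ one extracts the dependence relation from the $Q$-equations as above. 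A secondary, genuinely mechanical obstacle is that the intermediate expressions are large enough that, as the authors note throughout the paper, they are best verified with a computer algebra system; in the written proof I would only display the final dependence relation among the $A_j$ and remark that the coefficient of $A_4$ cannot vanish, leaving the verification to symbolic computation. As in the rest of the paper, I would end the proof with ``the statement follows from Lemma \ref{redusointi2}.''
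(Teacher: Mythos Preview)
Your geometric setup is wrong, and the error propagates through the whole plan. From $p_{356}=p_{456}=0$ you conclude that $\mathrm{span}(A_5,A_6)=\mathrm{span}(A_1,A_2)$; but those two determinants say that $A_3$ and $A_4$ lie in $\mathrm{span}(A_5,A_6)$ (when the latter is a plane), not that $A_1,A_2$ do. So the case-(1) parametrisation $A_5=\cos\theta\,A_1-\sin\theta\,A_2$, $A_6=\sin\theta\,A_1+\cos\theta\,A_2$ is not available here, and the whole $\theta'$ / $c_{12}-c_{56}$ argument you outline never gets started.

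More seriously, you dismiss the subcase $A_5\times A_6=0$ as ``reducing trivially'', but it does not: $A_5\times A_6=0$ only says that $A_5$ and $A_6$ are parallel at each instant, which is \emph{not} linear dependence over $\mathbb{R}$ in the sense of Lemma~\ref{redusointi2} (the ratio may vary in time). And this is precisely the situation that actually occurs in case~(2). In the paper's argument one normalises $p_{123}=1$, $p_{124}=0$, disposes of $p_{145}-p_{246}=0$ by sending it to case~(1), and then adds a multiple of $A_4$ to $A_3$ to obtain $p_{135}-p_{236}=0$. Combining $p_{135}-p_{236}=p_{136}+p_{235}=p_{356}=0$ with $p_{123}=1$ forces $A_5=p_{125}A_3$ and $A_6=p_{126}A_3$: both are scalar multiples of $A_3$, with time-dependent scalars. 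The constant-coefficient dependence is then extracted from the two constraints $Q_{35}=c_{35}$ and $Q_{36}=c_{36}$, which yield a relation $c_1A_3+c_{36}A_5-c_{35}A_6=0$ with $c_{35},c_{36}$ necessarily nonzero; only then does Lemma~\ref{redusointi2} apply. Your plan misses this entire mechanism.
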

\begin{proof}
Again, we may assume $p_{123}=1$, $p_{124}=0$. If $p_{145}-p_{246}=0$, we can reduce the situation to case (1). Thus we may assume that $p_{145}-p_{246}\neq 0$, and, after adding a multiple of $A_4$ to $A_3$, that $p_{135}-p_{236}=0$.
With these determinant constants we may write
\[
    A_4=p_{234}A_1-p_{134}A_2, \quad A_5=p_{125}A_3, \quad A_6=p_{126}A_3\, .
\]
The conditions for $Q_{35}$ and $Q_{36}$ yield
\[
c_1A_3+c_{36}A_5-c_{35}A_6=0
\]
for some constant $c_1$. Here $c_{35}$ and $c_{36}$ cannot be zero, so this proves there are no irreducible solutions.
\end{proof}

The third case of  Lemma \ref{3x6-ell-tapaukset} does provide solutions. 
 By replacing $A_3$ and $A_4$ by suitable linear combinations of $A_3$ and $A_4$, we can further without loss of generality assume that 
\[
p_{135}-p_{236} = p_{146}+p_{245} = 0, \quad p_{136}+p_{235} = -p_{145}+p_{246} = 1\, ,
\]
and this leads to
\[
\det(d\varphi)=f^2_{100}+f^2_{010}f^1_{001}\ne 0\ .
\]
From this it follows that either $A_1$ and $A_2$ are parallel or $A_5$ and $A_6$ are parallel. Choosing the latter option leads to
\[
B= \begin{pmatrix}
    b_{11} & b_{12} &\ell_1b_{11}+\ell_2b_{12} & -\ell_2b_{11}+\ell_1 b_{12} &  \ell_1b_{15} & -\ell_2 b_{15}\\
    0 & b_{22} & \ell_2 b_{22} & \ell_1 b_{22} & \ell_1b_{25}&  -\ell_2b_{25} \\
    0 & 0 &0 & 0 &\ell_1 b_{35}& -\ell_2 b_{35}
\end{pmatrix}
\]
and in addition 
\[
   b_{11}b_{22}b_{35}\big(\ell_1^2+\ell_2^2\big)+1=0\ .
\]
Then we have
\begin{lemma}
There is a function $\theta$ and constants $k_j$, $\gamma$ such that
\begin{align*}
   & \ell_1=k_1+k_0\cos(\theta), \quad \ell_2=k_2+k_0\sin(\theta)\\
    &  \theta'b_{11}b_{22}=\gamma \ .
\end{align*}
\label{ymp}
\end{lemma}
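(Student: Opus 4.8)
The plan is to convert the time-independence conditions of Lemma~\ref{kriteeri} into a first order system for $(\ell_1,\ell_2)$ and to recognize it as a rotation equation. Throughout I use that in the matrix $B$ above $B_1,B_2,B_3,B_4$ are coplanar — all lying in the plane $\{z_3=0\}$ spanned by $B_1,B_2$ — while $B_5,B_6$ are collinear, each a scalar multiple of $(b_{15},b_{25},b_{35})$. Hence $B_1\times B_2=(0,0,b_{11}b_{22})$, every $B_i\times B_j$ with $i,j\le 4$ is a scalar multiple of it, and $B_5\times B_6=0$. Writing $h=\sum Q_{ij}\,dv^i\wedge dv^j$ as in \eqref{h-rajehto} and substituting the spatial constraints $G_{34}=0$, $G_{26}=-G_{15}$, $G_{25}=G_{16}$, the conditions of Lemma~\ref{kriteeri} amount to constancy of
\[
  Q_{12},\ Q_{13},\ Q_{14},\ Q_{23},\ Q_{24},\ Q_{15}-Q_{26},\ Q_{16}+Q_{25},\ Q_{35},\ Q_{36},\ Q_{45},\ Q_{46},\ Q_{56},
\]
together with the determinant relation $b_{11}b_{22}b_{35}(\ell_1^2+\ell_2^2)=-1$ already recorded.

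First I would exploit $Q_{13},Q_{14},Q_{23},Q_{24}$. Because of the coplanarity, substituting $B_3=\ell_1B_1+\ell_2B_2$ and $B_4=-\ell_2B_1+\ell_1B_2$ into \eqref{Q-kaava} makes $w$ drop out of these four combinations, each of which becomes linear in $\ell_1',\ell_2'$ with the Gram entries $\langle B_i,B_j\rangle$, $i,j\le 2$, as coefficients. Solving the $2\times2$ subsystem $\{Q_{13}=c_{13},\,Q_{23}=c_{23}\}$ — whose coefficient determinant is $b_{11}^2b_{22}^2\neq0$ — a direct computation gives
\[
  \ell_1'=\frac{c_{12}}{b_{11}^2b_{22}^2}\bigl(b_{11}b_{12}(\ell_1-k_1)+(b_{12}^2+b_{22}^2)(\ell_2-k_2)\bigr),\qquad
  \ell_2'=\frac{c_{12}}{b_{11}^2b_{22}^2}\bigl(-b_{11}^2(\ell_1-k_1)-b_{11}b_{12}(\ell_2-k_2)\bigr),
\]
where the constants $k_1,k_2$ are fixed (in terms of $c_{12},c_{13},c_{23}$) so as to absorb the inhomogeneous parts. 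The $\ell$–linear part here is a traceless $2\times2$ matrix with determinant $c_{12}^2/(b_{11}^2b_{22}^2)>0$, that is, an infinitesimal rotation; this is the source of the circular structure. It also forces $c_{12}\neq0$, since otherwise $\ell_1,\ell_2$ would be constant and the solution would reduce by Lemma~\ref{redusointi2}.

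Next I would impose the remaining conditions. Using Lemma~\ref{redusointi} I would first zero out as many of the $c_{ij}$ as possible by a transformation $\tilde A=AH$ of the block form used in the proof of Lemma~\ref{3x6-ell-tapaukset}, which preserves the spatial constraints and the determinant constants already fixed. The two further equations $Q_{14}=c_{14}$, $Q_{24}=c_{24}$ and the equations $Q_{35},Q_{36},Q_{45},Q_{46}$ (from which $w_1,w_2$ are eliminated) then supply the compatibility constraints that, along the solution, make $(\ell_1-k_1)^2+(\ell_2-k_2)^2$ constant; denoting this value $k_0^2$ we may set $\ell_1=k_1+k_0\cos\theta$, $\ell_2=k_2+k_0\sin\theta$ for a function $\theta$, which is the first assertion. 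Finally, differentiating the parametrization gives $k_0^2\theta'=(\ell_1-k_1)\ell_2'-(\ell_2-k_2)\ell_1'$; substituting the explicit $\ell_1',\ell_2'$, then eliminating $b_{35}$ via $b_{11}b_{22}b_{35}(\ell_1^2+\ell_2^2)=-1$ and using $Q_{56}=c_{56}$ — which, since $B_5\times B_6=0$, reads $(\ell_1\ell_2'-\ell_1'\ell_2)(b_{15}^2+b_{25}^2+b_{35}^2)=c_{56}$ — everything collapses to $\theta'b_{11}b_{22}=\gamma$ for a constant $\gamma$, necessarily nonzero because $c_{12}\neq0$.

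The only genuine difficulty is the size of the elimination: showing that the overdetermined system in $Q_{13},Q_{14},Q_{23},Q_{24}$ is compatible exactly when the circular form holds, and that $k_0^2\theta'b_{11}b_{22}$ collapses to a constant, both require carrying a handful of polynomial identities in $\ell_1,\ell_2$, the $b_{ij}$ and $w$ through the remaining conditions, and keeping track of the constant reductions. No new idea is involved, and once the stated $\ell_1,\ell_2,\theta$ are written down, checking that every one of the twelve conditions holds is a routine verification with a computer algebra system.
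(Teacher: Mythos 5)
Your overall strategy is the paper's: after substituting $B_3=\ell_1B_1+\ell_2B_2$ and $B_4=-\ell_2B_1+\ell_1B_2$, the four conditions $Q_{13},Q_{14},Q_{23},Q_{24}$ become linear relations among $\ell_1',\ell_2'$ and the Gram entries $r_{ij}=\langle B_i,B_j\rangle$, with $w$ entering only through $Q_{12}=c_{12}$, and the overdeterminacy forces the circle. But two steps fail as written. First, the constants you introduce to absorb the inhomogeneous parts of the subsystem $\{Q_{13}=c_{13},\,Q_{23}=c_{23}\}$ are $k_1=-c_{23}/c_{12}$, $k_2=c_{13}/c_{12}$, and with these values $(\ell_1-k_1)^2+(\ell_2-k_2)^2$ is \emph{not} constant: your own formulas give
\[
\tfrac{d}{dt}\big[(\ell_1-k_1)^2+(\ell_2-k_2)^2\big]=\tfrac{2c_{12}}{b_{11}^2b_{22}^2}\big[r_{12}(\ell_1-k_1)^2+(r_{22}-r_{11})(\ell_1-k_1)(\ell_2-k_2)-r_{12}(\ell_2-k_2)^2\big],
\]
which does not vanish identically. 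The circle comes from the compatibility of the full $4\times3$ linear system in $(r_{11},r_{12},r_{22})$: eliminating $r_{12}$ between the pair $(Q_{13},Q_{14})$ and the pair $(Q_{23},Q_{24})$ yields $\ell_1'\,(2c_{12}\ell_1+c_{23}-c_{14})+\ell_2'\,(2c_{12}\ell_2-c_{13}-c_{24})=0$, so the centre is $k_1=(c_{14}-c_{23})/(2c_{12})$, $k_2=(c_{13}+c_{24})/(2c_{12})$; it involves $c_{14}$ and $c_{24}$, which your $k_j$ do not. Consequently your displayed ODEs drop the residual inhomogeneous terms $a_1=c_{13}-c_{12}k_2$, $a_2=c_{14}-c_{12}k_1$ relative to the true centre; these are nonzero for the generic solution (they appear in the paper's $\gamma^2=c_{12}^2k_0^2-(c_{12}k_1-c_{14})^2-(c_{12}k_2-c_{13})^2$ and later in the formula for $b_{11}^2$).

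Second, your route to $\theta'b_{11}b_{22}=\gamma$ does not close. Substituting the ODEs into $k_0^2\theta'=(\ell_1-k_1)\ell_2'-(\ell_2-k_2)\ell_1'$ produces $-c_{12}/(b_{11}^2b_{22}^2)$ times the Gram quadratic form evaluated at $(\ell_1-k_1,\ell_2-k_2)$, plus the $a_j$-terms you dropped, and this does not reduce to a constant times $1/(b_{11}b_{22})$ without knowing the $r_{ij}$ explicitly; moreover $Q_{56}$ and the relation $b_{11}b_{22}b_{35}(\ell_1^2+\ell_2^2)+1=0$ play no role in this lemma — they are used only afterwards to determine $\theta'$ and $b_{35}$. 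The paper's argument is shorter: solve the (now compatible) linear system for $r_{11},r_{12},r_{22}$ in terms of $\theta$, $\theta'$ and the constants, and invoke the Gram identity $r_{11}r_{22}-r_{12}^2=b_{11}^2b_{22}^2$; the $\theta$-dependence cancels and the left side equals $\big(c_{12}^2k_0^2-a_1^2-a_2^2\big)/(k_0\theta')^2$, which is the second assertion. Your skeleton is repairable along these lines, but as written both key identities rest on an incorrect identification of $(k_1,k_2)$.
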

\begin{proof}
Let $r_{ij}=\langle B_i,B_j\rangle$ as before. Four of the constraints can be written as follows:
\begin{align*}
    Q_{13}&=-r_{11}\ell_1'-r_{12}\ell_2'+c_{12}\ell_2=c_{13}\\
    Q_{14}&=r_{11}\ell_2'-r_{12}\ell_1'+c_{12}\ell_1=c_{14}\\
    Q_{23}&=-r_{12}\ell_1'-r_{22}\ell_2'-c_{12}\ell_1=c_{23}\\
    Q_{24}&=r_{12}\ell_2'-r_{22}\ell_1'+c_{12}\ell_2=c_{24} \ .
\end{align*}
Solving $r_{ij}$ from the first three equations and substituting we obtain from the fourth that $(\ell_1,\ell_2)$ traces a circle. This gives the first formula and we have also $c_{23} = -2 c_{12} k_1+c_{14}$ and $ c_{24} =2 c_{12}k_2 -c_{13}$. Using these $\ell_j$ we can compute the expressions for $r_{ij}$. Since $b_{11}^2b_{22}^2=r_{11}r_{22}-r_{12}^2$ we get the second formula where
\[
  \gamma^2=c_{12}^{2}k_0^2-\big(c_{12} k_1- c_{14}\big)^2 -\big(c_{12} k_2- c_{13}\big)^2 \ .
\]
\end{proof}

Hence to have nontrivial solutions we must have $k_0\ne 0$, $c_{12}\ne 0$ and $\gamma\ne 0$. Without loss of generality we can also assume that $k_1=k\ge 0$, $k_2=0$ and $k_0=1$. It turns out that we have two families of solutions. 
\begin{lemma}
We have $b_{15}=b_{25}=0$, if $k\ne 1$. If $k=1$ then
\begin{align*}
    b_{15} = &
\frac{c_{36}\sin \! \left(\theta \right) -c_{46} (1- \cos \! \left(\theta \right))}{2 \gamma \sin \! \left(\theta \right)}\,b_{22} \\
b_{25} = &
\frac{ m_1\sin \! \left(\theta \right)+m_0 \left(1+  \cos \! \left(\theta \right)\right)}{\gamma^2\left(1+  \cos \! \left(\theta \right)\right) }\, b_{22}\ ,
\end{align*}
where the constants $m_j$ satisfy
\[
\left(c_{36}^{2}+c_{46}^{2}\right) \gamma^{2}+4 c_{12} \big(c_{46} m_{0}+c_{36} m_{1}\big)+4 m_{0}^{2}+4 m_{1}^{2}=0\ .
\]
\label{b1525}
\end{lemma}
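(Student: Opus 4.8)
The plan is to push the analysis of the constraint system $Q_{ij}=c_{ij}$ a bit further, starting from the partial normal form already obtained: the $B$-matrix above, the circle parametrization $\ell_1=k+\cos\theta$, $\ell_2=\sin\theta$ (after the normalizations $k_1=k\ge 0$, $k_2=0$, $k_0=1$) and the relation $\theta' b_{11}b_{22}=\gamma$ from Lemma~\ref{ymp}. The remaining unused constraints are those involving the fifth and sixth columns, namely (schematically) $Q_{15},Q_{16},Q_{25},Q_{26},Q_{35},Q_{36},Q_{45},Q_{46},Q_{56}$ together with the determinant normalizations; these are exactly the equations that see $b_{15},b_{25},b_{35}$. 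I would substitute the circle parametrization and the formulas for $r_{ij}$ computed in the proof of Lemma~\ref{ymp} into this subsystem and eliminate derivatives using $\theta'=\gamma/(b_{11}b_{22})$ and $\ell_1'=-\sin\theta\,\theta'$, $\ell_2'=\cos\theta\,\theta'$.

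The key mechanism I expect is the following. After elimination, the $b_{15},b_{25},b_{35}$-equations will be linear in these three unknowns with coefficients that are trigonometric polynomials in $\theta$ (with constant coefficients built from the $c_{ij}$, $k$, $\gamma$). Since $b_{35}$ is tied to $b_{15},b_{25}$ through $b_{11}b_{22}b_{35}(\ell_1^2+\ell_2^2)+1=0$ and $\ell_1^2+\ell_2^2 = k^2+2k\cos\theta+1$, it is natural to solve for $b_{15},b_{25}$ as multiples of $b_{22}$ (using $b_{11}b_{22}$ known in terms of $\theta'$). The linear system for the ratios $b_{15}/b_{22}$, $b_{25}/b_{22}$ will have a coefficient determinant that is a trigonometric polynomial; I expect this determinant to vanish identically precisely when $k=1$, and to be generically nonzero otherwise. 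When it is nonzero the unique solution is $b_{15}=b_{25}=0$ (because the right-hand side of the linear system turns out to be zero after the reductions), which is the first assertion. When $k=1$ the system is rank-deficient: one equation determines $b_{15}/b_{22}$ (giving the stated formula with the $\sin\theta$ in the denominator, reflecting a removable factor of $1-\cos\theta$ versus $\sin\theta$), and a second, compatibility, equation forces the remaining constants into the one-parameter family $m_0,m_1$ with the quadratic constraint $(c_{36}^2+c_{46}^2)\gamma^2+4c_{12}(c_{46}m_0+c_{36}m_1)+4m_0^2+4m_1^2=0$; then $b_{25}/b_{22}$ is read off from a third equation, yielding the displayed expression with the $1+\cos\theta$ factors.

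Concretely the steps are: (1) write out the nine column-5/6 constraints with the $B$ above; (2) substitute $\ell_j(\theta)$, $\ell_j'(\theta)$, $r_{ij}(\theta)$ and $b_{35}=-1/(b_{11}b_{22}(k^2+2k\cos\theta+1))$; (3) divide through by $b_{22}$ and treat $x=b_{15}/b_{22}$, $y=b_{25}/b_{22}$ as unknowns, obtaining an overdetermined linear system $M(\theta)(x,y)^{T}=r(\theta)$ with constant-parametrized trigonometric entries; (4) compute the relevant $2\times2$ minors of $M(\theta)$ and show they are (up to a common trigonometric factor) a nonzero constant multiple of $(k-1)$ times something nonvanishing, so that for $k\ne1$ the only solution compatible with all equations is $x=y=0$; (5) for $k=1$, identify the one genuinely free combination of the remaining $c_{ij}$ (call the two parameters $m_0,m_1$), derive the quadratic relation among $c_{36},c_{46},m_0,m_1,\gamma,c_{12}$ as the consistency condition of the overdetermined system, and solve for $x,y$. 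The main obstacle is step (4): the elimination of $\theta$-derivatives produces fairly large trigonometric expressions, and one must organize the computation (e.g. via the substitution $\tau=\tan(\theta/2)$, turning everything into rational functions, or by collecting coefficients of $1,\cos\theta,\sin\theta,\cos2\theta,\sin2\theta$) so that the factor $(k-1)$ and the $k=1$ degeneracy become visible rather than drowned in algebra. This is exactly the kind of step the authors note is "computationally heavy" and best handled with computer algebra; once the minors are in hand, the casework and the final formulas are routine, and, as stressed in the introduction, verifying the claimed $b_{15},b_{25}$ by back-substitution into $Q_{ij}=c_{ij}$ is a direct check.
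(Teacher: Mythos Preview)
Your strategy is the paper's: feed the circle parametrization and the $r_{ij}$ from Lemma~\ref{ymp} into the remaining constraints and analyze the resulting linear system in $b_{15},b_{25}$ (and $b_{11}$). Two points where your set-up diverges from the paper are worth flagging.

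First, the paper organizes the reduction as a \emph{homogeneous} $4\times 3$ system
\[
   M\begin{pmatrix} b_{11}\\ b_{15}\\ b_{25}\end{pmatrix}=0,
\]
rather than an inhomogeneous system in $(b_{15}/b_{22},\,b_{25}/b_{22})$. Since $b_{11}\ne 0$ one needs a nontrivial kernel, so all $3\times 3$ minors of $M$ must vanish; these minor conditions are constraints on the \emph{constants} $c_{ij}$ and $k$ (in particular they force $c_{35}=c_{45}=0$), not merely trigonometric identities in $\theta$.

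Second, your conjectured mechanism for the case split --- that the $2\times 2$ minors of the $(b_{15},b_{25})$-block carry a factor $(k-1)$ --- is not what actually happens. What the paper finds is that for $k\ne 1$ the minor conditions force the \emph{first} column of $M$ (the $b_{11}$-coefficients) to vanish identically; the remaining $4\times 2$ block in $(b_{15},b_{25})$ then has full rank for every admissible choice of constants, whence $b_{15}=b_{25}=0$. For $k=1$ the first column need not vanish but instead lies in the span of the other two, and solving the resulting rank-$2$ system gives the displayed formulas together with the quadratic compatibility relation in $m_0,m_1$. So the dichotomy is driven by whether the minor conditions kill the $b_{11}$-column, not by a degeneracy of the $(b_{15},b_{25})$-block. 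If you pursue your inhomogeneous formulation this will show up as the right-hand side (which is essentially $-b_{11}$ times that first column) being forced to zero by the constant constraints when $k\ne 1$, rather than being zero ``after the reductions'' for structural reasons.
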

Hence both $b_{15}$ and $b_{25}$ must be nonzero, and also $c_{46} m_{0}+c_{36} m_{1}\ne0$.

\begin{proof}
Substituting what we found already to the equations we notice that there are four equations which can be written as 
\[
   M\begin{pmatrix}
       b_{11}\\b_{15}\\b_{25}
   \end{pmatrix}=0\ .
\]
Here the elements of $M$ are polynomials in $\cos(\theta)$, $\sin(\theta)$ and various constants. To have nontrivial solutions all $3\times 3$ minors of $M$ have to be zero. This is possible; a necessary condition in all cases is $c_{35}=c_{45}=0$.  But when $k\ne 1$ then the first column must be zero and the equations do not contain $b_{11}$. However,  the second and third columns are linearly independent for all acceptable choices of constants which implies that $b_{15}=b_{25}=0$.

If $k=1$ then the solution is obtained by simple computation. 
\end{proof}

\begin{theorem}
The solution can be written as follows when $k\ne 1$:
\begin{align*}
  (\theta')^3=&\frac{\gamma^2c_{56}\big(k^2+2k\cos(\theta)+1\big)^2}{k\cos(\theta)+1}\\
  b_{35}=&-\frac{\theta'}{\gamma  \left(k^{2}+2 k \cos \! \left(\theta \right)+1\right)}\\
  b_{11}^2=&\frac{\left(c_{14}-c_{12} k\right) \cos \! \left(\theta \right)+ c_{13}\sin \! \left(\theta \right)-c_{12}}{\theta'}\\
  b_{12}=&\frac{\left(c_{14}-c_{12} k\right)\sin \! \left(\theta \right) -c_{13}\cos \! \left(\theta \right) }{b_{11}\theta'}\\
  b_{22}=&\frac{\gamma}{b_{11}\theta'}\quad,\quad
  w_1=w_2=0\quad,\quad
   w_3=-\frac{\gamma }{b_{11}^2}\ .
\end{align*}
\end{theorem}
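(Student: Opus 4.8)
The plan is to continue directly from Lemma \ref{b1525}: in the case $k\neq1$ we already know $b_{15}=b_{25}=0$, and from Lemma \ref{ymp} we have $\ell_1=k+\cos\theta$, $\ell_2=\sin\theta$ and $\theta'b_{11}b_{22}=\gamma$ (with $\gamma\neq0$, hence $\theta'\neq0$), so what remains is to solve the leftover constraints $Q_{ij}=c_{ij}$ for $b_{11},b_{12},b_{22},b_{35}$, for the Euler-parameter variables $w_j$, and for the angle $\theta$. First I would record the effect of $b_{15}=b_{25}=0$ on the columns of $B$: then $B_1,\dots,B_4$ all have vanishing third component, while $B_5$ and $B_6$ have only a third component, equal to $\ell_1b_{35}$ and $-\ell_2b_{35}$. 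Inserting $\ell_1^2+\ell_2^2=k^2+2k\cos\theta+1$ and $b_{11}b_{22}=\gamma/\theta'$ into the determinant relation $b_{11}b_{22}b_{35}(\ell_1^2+\ell_2^2)+1=0$ then yields the claimed formula for $b_{35}$ at once.

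Next I would use this block structure in the formula \eqref{Q-kaava} for $Q_{ij}$. Since $B_1,\dots,B_4$ are coplanar, each cross product $B_i\times B_j$ with $i<j\le4$ has only a third component, so $Q_{ij}=c_{ij}$ with $i<j\le4$ contains only $w_3$; the products $B_i\times B_5$ and $B_i\times B_6$ with $i\le4$ have vanishing third component, so $Q_{i5}=c_{i5}$ and $Q_{i6}=c_{i6}$ contain only $w_1,w_2$; and $B_5\times B_6=0$, so $Q_{56}=c_{56}$ contains no component of $w$. From the first group I would recover the shape of $b_{11},b_{12},b_{22}$: after eliminating $w_3$ via $Q_{12}=c_{12}$ one is exactly in the setting of Lemma \ref{ymp}, and the expressions found there for $r_{11}=b_{11}^2$, $r_{12}=b_{11}b_{12}$, $r_{22}=b_{12}^2+b_{22}^2$ collapse, once the normalized constants are put in, to the stated formulas for $b_{11}^2$ and $b_{12}$, while $b_{22}=\gamma/(b_{11}\theta')$ follows from $\theta'b_{11}b_{22}=\gamma$; feeding this back into $Q_{12}=c_{12}$ then returns $w_3=-\gamma/b_{11}^2$.

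For the $w_1,w_2$ group I would argue that the coefficients of $w_1,w_2$ in $Q_{i5},Q_{i6}$ are proportional to $\ell_1b_{35}$ and $\ell_2b_{35}$, whose ratio $(k+\cos\theta)/\sin\theta$ is a non-constant function of $t$ because $\theta'\neq0$; matching against the constants $c_{i5},c_{i6}$ therefore forces $w_1=w_2=0$ and $c_{i5}=c_{i6}=0$ for all $i\le4$, consistently with the vanishing $c_{35}=c_{45}=0$ already noted in Lemma \ref{b1525}. Finally, $Q_{56}=c_{56}$ reads $b_{35}^2\theta'(k\cos\theta+1)=c_{56}$, and substituting the formula for $b_{35}$ turns this into the announced differential equation for $\theta$, which in particular forces $c_{56}\neq0$.

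I expect the main obstacle to be the closing consistency check: one must verify that the equations not used in the derivation — $Q_{24}$, $Q_{34}$, $Q_{36}$, $Q_{46}$, together with the determinant conditions not yet exploited — are identically satisfied by the data above. The plan is to differentiate the algebraic relations for $b_{11},b_{12},b_{22},b_{35}$, use the ODE for $\theta$ to eliminate $\theta''$, and check that the resulting polynomial identities in $\cos\theta,\sin\theta$ and the constants vanish. This is mechanical but the intermediate expressions are large, so in practice it should be carried out with a computer algebra system; one must also keep track that one stays in the branch $k\neq1$ and that $c_{12}$, $\gamma$ and $c_{56}$ are nonzero, so that the divisions performed along the way are legitimate.
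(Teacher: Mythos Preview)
Your plan coincides with the paper's proof in all essentials: you derive $b_{35}$ from the determinant relation, obtain $w_1=w_2=0$ and the vanishing of the constants $c_{35},c_{36},c_{45},c_{46}$ from the cross-coupling equations, read off $b_{11}^2,b_{12},b_{22}$ from the $r_{ij}$ computed in Lemma~\ref{ymp}, extract $w_3$ from $Q_{12}$, and finally get the ODE for $\theta$ from $Q_{56}$. The paper follows exactly this route, only with less explicit reference to the block structure of $B$.

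There are, however, a couple of bookkeeping slips in your write-up that you should fix. First, the individual functions $Q_{15},Q_{16},Q_{25},Q_{26}$ are \emph{not} required to be constant in this elliptic case; only the combinations $Q_{15}-Q_{26}$ and $Q_{16}+Q_{25}$ are. So your claim that ``$c_{i5}=c_{i6}=0$ for all $i\le4$'' is not well-posed for $i=1,2$; the legitimate argument for $w_1=w_2=0$ uses $Q_{35},Q_{36},Q_{45},Q_{46}$ (these \emph{are} constraints), giving $P_3=P_4=0$ and hence $w_1=w_2=0$; then $Q_{15}-Q_{26}$ and $Q_{16}+Q_{25}$ vanish automatically. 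Second, your list of ``leftover'' checks is off: $Q_{34}$ is never a constraint since $G_{34}=0$, and $Q_{36},Q_{46}$ were precisely the equations you used to force $w_1=w_2=0$; the genuine residual checks are $Q_{23},Q_{24}$ (with $c_{23}=c_{14}-2c_{12}k$, $c_{24}=-c_{13}$ as fixed in Lemma~\ref{ymp}), which is indeed mechanical.
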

Note that to have global solutions we must assume that $k<1$ and that $|c_{12}|$ is sufficiently big.

\begin{proof}
Substituting $b_{15}=b_{25}=0$ to the original equations we see that also $c_{36}=c_{46}=0$.   This  implies that $w_1=w_2=0$ and we also get the formula for $b_{12}$ as above. For $w_3$ we first get
\[
   w_3=\frac{b_{11}b_{12}'-  b_{11}'b_{12}+c_{12}}{b_{11}b_{22}}\ .
\]
This can be computed since $b_{11}b_{12}'-  b_{11}'b_{12}=r_{11}(r_{12}/r_{11})'$ where $r_{ij}$ are as in Lemma \ref{ymp}. 
We have then only three equations involving $\theta$, $b_{11}$ and $b_{35}$ left. Two of them give 
\begin{align*}
&   Q_{56}= b_{35}^{2} \left(k \cos \! \left(\theta \right)+1\right) \theta'=c_{56}\\
& b_{11}b_{22}b_{35}\big(\ell_1^2+\ell_2^2\big)+1=
\frac{\theta'+\gamma  b_{35}\big(k^{2}+2\,k\,  \cos \! \left(\theta \right) +1\big)}{\theta'}=0\ .
\end{align*}
This yields the first two equations and the expression for $b_{11}$ follow from these.
\end{proof}

The vorticity is given by
\[
     h=\begin{pmatrix}
  -c_{13} f^{1}_{001}-2 c_{12} k +c_{14}
\\[1mm]
-c_{14} f^{1}_{001}-c_{13}\\[1mm] 
- c_{56}|\nabla f^2|^2-\left(c_{16}+c_{25}\right) f^{2}_{100}+\left(c_{15}-c_{26}\right) f^{2}_{010}+c_{12}
     \end{pmatrix}\ .
\]

\begin{theorem}
If $k= 1$ the solution can be written as follows: $b_{15}$ and $b_{25}$ are as in Lemma \ref{b1525} and for others we obtain
\begin{align*}
  (\theta')^3=&4\big(c_{56}\gamma^2-c_{36}m_1-c_{46}m_0\big)(1+ \cos(\theta))\\
  b_{35}=&-\frac{\theta'}{2\gamma \left(1+ \cos \! \left(\theta \right)\right)}\\
  b_{11}^2=&\frac{(m_2-m_3)\cos(\theta)-2m_4\sin(\theta)+m_2+m_3}{4\theta'}\\
  b_{12}=& \frac{(m_2-m_3)\sin(\theta)+2m_4\cos(\theta)}{4 b_{11}\theta'} \\
    b_{22}=&\frac{\gamma}{b_{11}\theta'}\quad,\quad w_1 =
 w_2 =0
\quad,\quad w_3= -\frac{\gamma}{b_{11}^2}\ ,
\end{align*}
where
\[
  m_2=\frac{\gamma^{2} c_{36}^{2}+4 m_{0}^{2}}{c_{36} m_{1}+c_{46} m_{0}}\quad,\quad
  m_3=\frac{\gamma^{2} c_{46}^{2}+4 m_{1}^{2}}{c_{36} m_{1}+c_{46} m_{0}}\quad,\quad
  m_4=\frac{\gamma^{2} c_{36}c_{46}-4 m_0m_1}{c_{36} m_{1}+c_{46} m_{0}}\ .
\]
\end{theorem}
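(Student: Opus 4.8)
The plan is to follow the same route as in the preceding theorem (the case $k\ne 1$), the only change being that $b_{15}$ and $b_{25}$ are now the nonzero expressions furnished by Lemma \ref{b1525} rather than zero. So I would take the matrix $B$ above with $\ell_1=k+\cos(\theta)$, $\ell_2=\sin(\theta)$ and $\theta'b_{11}b_{22}=\gamma$ (Lemma \ref{ymp}), substitute $b_{15},b_{25}$ from Lemma \ref{b1525}, and write out the constraints $Q_{ij}=c_{ij}$ together with the determinant relation $b_{11}b_{22}b_{35}(\ell_1^2+\ell_2^2)+1=0$ that were not already consumed in the proofs of Lemmas \ref{ymp} and \ref{b1525}.

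First I would solve the constraints that are linear in $w_1,w_2,w_3$; as in the $k\ne1$ case this forces $w_1=w_2=0$ and
\[
  w_3=\frac{b_{11}b_{12}'-b_{11}'b_{12}+c_{12}}{b_{11}b_{22}}\ ,
\]
and, using $r_{ij}=\langle B_i,B_j\rangle$, the identity $b_{11}b_{12}'-b_{11}'b_{12}=r_{11}(r_{12}/r_{11})'$, and the explicit $r_{ij}$ coming from the circle parametrization of Lemma \ref{ymp}, this reduces to $w_3=-\gamma/b_{11}^2$. Next, the subsystem obtained from $Q_{13},Q_{14},Q_{23}$ with $b_{15},b_{25}$ inserted is linear in $r_{11},r_{12},r_{22}$; solving it and reading off $r_{11}=b_{11}^2$, $r_{12}=b_{11}b_{12}$ gives the stated expressions for $b_{11}^2$ and $b_{12}$, the constant combinations appearing there being precisely the $m_2,m_3,m_4$ of the statement — which are well defined since $c_{36}m_1+c_{46}m_0\ne0$ by Lemma \ref{b1525} — while $Q_{24}$ together with $r_{11}r_{22}-r_{12}^2=b_{11}^2b_{22}^2$ stays consistent with Lemma \ref{ymp}.

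After substituting all of this, essentially only $Q_{56}=c_{56}$ and the determinant relation remain. Since $\ell_1^2+\ell_2^2=2(1+\cos(\theta))$ when $k=1$, the determinant relation rearranges to $\theta'+2\gamma b_{35}(1+\cos(\theta))=0$, which gives the formula for $b_{35}$; substituting this into $Q_{56}=c_{56}$ — which, unlike in the $k\ne1$ case, now also picks up contributions from the nonzero $b_{15},b_{25}$ — produces the cubic $(\theta')^3=4\big(c_{56}\gamma^2-c_{36}m_1-c_{46}m_0\big)(1+\cos(\theta))$. Finally $b_{22}=\gamma/(b_{11}\theta')$ is Lemma \ref{ymp} again, $w_1=w_2=0$ and $w_3=-\gamma/b_{11}^2$ were found above, and throughout the quadratic relation on $m_0,m_1$ from Lemma \ref{b1525} is used to collapse the bulky constant expressions into $m_2,m_3,m_4$.

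I expect the main obstacle to be the sheer size of the intermediate expressions: with $b_{15},b_{25}\ne0$ the $Q_{ij}$ are far larger than in the $k\ne1$ case, and the delicate step is the trigonometric bookkeeping — systematically pulling out the factor $1+\cos(\theta)$ and verifying that the leftover constant clutter really does assemble into $m_2,m_3,m_4$ subject to the stated identity. As throughout the paper, this is not carried out by hand; once the formulas are conjectured one confirms them by direct substitution into the original system, a routine computer-algebra check.
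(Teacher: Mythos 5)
Your proposal is correct and follows essentially the same route as the paper: substitute the $b_{15}$, $b_{25}$ and constants from Lemma \ref{b1525}, read off $w_1=w_2=0$, $w_3=-\gamma/b_{11}^2$ and $b_{12}$, and then extract $\theta'$, $b_{35}$ and $b_{11}$ from the handful of remaining equations (the paper phrases this last step as eliminating $b_{11}$ and $b_{35}$ to land on the $\theta$ equation first, while you identify the determinant relation and $Q_{56}$ as the specific sources, but this is the same computation in a different order). The only nitpick is that the $Q_{13},Q_{14},Q_{23},Q_{24}$ subsystem involves only columns $1$--$4$ and hence does not actually contain $b_{15},b_{25}$; this does not affect the argument.
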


\begin{proof}
Substituting $b_{15}$, $b_{25}$ and various constants computed in Lemma \ref{b1525} to our equations we readily get $w_1=w_2=0$, $w_3=-\gamma/b_{11}^2$ and $b_{12}$ as given above. Substituting everything back to equations leaves us with three equations containing $\theta$, $b_{35}$ and $b_{11}$. The expressions for these equations are too big to be written down. But eliminating $b_{11}$ and $b_{35}$ from the equations gives the surprisingly simple equation for $\theta$ given above and then the formulas for $b_{11}$ and $b_{35}$ follow easily. 
\end{proof}

In the solutions $\theta$ reaches $\pi$ in finite time and the solution blows up.

\subsection{Extension for the spatial component}

We can again look for other solutions by restricting the time component and trying to find more general solutions to the spatial component. Let us consider the matrix already mentioned in \eqref{trig-es},
\[
A=\begin{pmatrix}
    \cos(\theta_0t) & -\sin(\theta_0t) & \cos(\theta_0t) & \sin(\theta_0t) & 0 & 0\\
    \sin(\theta_0t) & \cos(\theta_0t) & -\sin(\theta_0t) & \cos(\theta_0t) & 0 & 0\\
    0 & 0 & 0 & 0 & \cos(2\theta_0t) & \sin(2\theta_0t)
\end{pmatrix}\, ,
\]
which is one particular form of the elliptic case. Now the conditions for the spatial component are
\begin{align*}
  &  g_{125}+g_{345}=
    g_{126}+g_{346}=0\\
   & g_{135}-g_{236}+g_{245}+g_{146}=0\\
&    g_{136}+g_{246}-g_{145}+g_{235}=0\, .
\end{align*}
Writing $v=\big(z,f(z)\big)$ as before and supposing that $f^2$ and $f^3$ are not anti CR with respect to $(z_1,z_2)$ we obtain a system of the form 
\[
  f^k_{001}=g^k \quad,\quad 1\le k\le 3
  \quad,\quad f^3_{010}=g^4\ ,
\]
where the (real analytic) functions $g^k$ do not contain derivatives with respect to $z_3$. The analysis of this system turns out to be surprisingly tricky, so we simply give some indications of the necessary steps. The terms in italics are defined and explained in \cite{werner}.

If we had only the equations $ f^k_{001}=g^k$ for  $ 1\le k\le 3$ then the local solution with initial conditions $f^k(z_1,z_2,0)=h^k(z_1,z_2)$ would exist by Cauchy--Kovalevskaia Theorem. Now one could hope that if the functions $h^k$  satisfied the final equation then this would lead to a solution. Indeed there is a generalization of Cauchy--Kovalevskaia Theorem, namely \emph{Cartan--K\"ahler Theorem}, which can be applied to the overdetermined case. However, this theorem requires that the system is \emph{involutive}; in this case one can write the system in the \emph{Cartan normal form} from which one can see what kind of initial conditions are appropriate.

Unfortunately it happens that our system is not involutive. 
Now any system can in principle be transformed to an involutive form by \emph{prolongations} and \emph{projections}. This is sometimes called the \emph{Cartan--Kuranishi algorithm}. Typically the concrete computations are difficult, even for a computer. We checked that the \emph{symbol} of the system becomes involutive after two prolongations, and then there is one \emph{integrability condition} of third order. We did not check if adding this integrability condition produces an involutive system or if there are further integrability conditions of higher order. Anyway in this way one can in principle compute the involutive form after which the existence of the local solution follows from the Cartan--K\"ahler theorem.

\section{$m=6$: parabolic case}

This case has the spatial constraints
\[
    G_{15}+G_{26}=G_{16}= G_{34}=0\, .
\]
The solution to this is
\[
v=(z_1,z_2,z_3,f_1(z_3),z_2f_3'(z_1)+f_2(z_1),f_3(z_1))\, .
\]
For the time component the following quantities must be constant:
\begin{align*}
    &p_{123}, p_{356}, p_{235}, p_{135}-p_{236}, \\
    &p_{124}, p_{456}, p_{245}, p_{145}-p_{246}, \\
    &Q_{12}, Q_{13}, Q_{14}, Q_{15}-Q_{26}, Q_{25}, \\
    &Q_{23}, Q_{24}, Q_{35}, Q_{36}, Q_{45}, Q_{46}, Q_{56}\, .
\end{align*}
The determinant constants can be reduced to several different cases. It is best to omit the consideration of the ones that do not give solutions as they just contain long calculations similar to what we have seen. The only reduced case for the determinant constants that provides irreducible solutions is where
\begin{align*}
    &p_{123}=0, p_{356}=0, p_{235}=1, p_{135}-p_{236}=0, \\
    &p_{124}=0, p_{456}=0, p_{245}=0, p_{145}-p_{246}=1.
\end{align*}
It turns out that we must have either $p_{125}=p_{345}=0$ or $p_{234}=p_{256}=0$ and we may assume $p_{125}=p_{345}=0$ by exchanging $A_1$ with $A_6$ and $A_2$ with $A_5$ if needed. The determinant conditions are thus satisfied if 
\begin{align*}
    A_1&=\ell_1A_2\quad,\quad
    A_4=\ell_2A_5\\
    A_6&=\frac{1}{\ell_2}A_3+\ell_1A_5
\end{align*}
for some functions $\ell_1$ and $\ell_2$ and in this case we have
\[
  \det(d\varphi)=f^2_{100}+z_2f^3_{200}-f^1_{001}f^3_{100}\ne 0\ .
\]
Now using the QR decomposition with respect to columns 2, 3 and 5 we can thus write 
\[
  B= \begin{pmatrix}
   \ell_1 b_{12} & b_{12} & b_{13} & \ell_2b_{15}& b_{15}& \ell_1b_{15}+b_{13}/\ell_2  \\
    0 & 0 & b_{23} & \ell_2b_{25} & b_{25} & \ell_1b_{25}+b_{23}/\ell_2 \\
    0 & 0 & -1/b_{12}b_{25} & 0 &0& -1/(\ell_2 b_{12}b_{25})
    \end{pmatrix}\, .
\]
The analysis then is surprisingly similar to the hyperbolic case in Section 6. Again it is not possible to give the computations in detail, but  the basic idea is as follows. First the preliminary computations show that we must have $w=0$ so that there is no rotational component. Then computing further we obtain
\[
 (\hat m_1\ell_2+\hat m_0) \ell_1+\hat k_2\ell_2^2+\hat k_1 \ell_2+\hat k_0=0
\]
for some constants $\hat k$ and $\hat m$. At first it is possible to choose constants $c_{ij}$ such that $\hat k_j=\hat m_j=0$ and thus apparently to avoid the direct algebraic relation between $\ell_1$ and $\ell_2$. However, the system is such that later we get this type of relation anyway. Moreover it turns out that $\hat m_0=0$ in any case. Hence to get nontrivial solution we must always have
\begin{equation}
   \ell_1=k_2\ell_2+k_1+k_0/\ell_2\ .
\label{yrite-l}    
\end{equation}
We thus get three families of solutions: (1) $k_2k_0\ne0$, (2) $k_0=0$ and (3) $k_2=0$.
\begin{lemma}
Suppose that $k_2k_0\ne0$; then 
\begin{align*}
\left(\ell_{2}'\right)^{3} = &
\frac{k_3\ell_{2}^{4}  }{ k_0-k_2 \ell_{2}^{2}}&
l_{1} = &
k_2\ell_2+k_1+k_0/\ell_2\\
b_{25}^6=&\frac{c_{45}^3( k_0-k_2 \ell_{2}^{2})}{k_3\ell_{2}^{4}}&
    b_{13}=&\,b_{15}=0\\
    b_{23}=&(k_4\ell_2-k_0)b_{25}&
b_{12}^2=&\frac{c_{12} b_{25}^2\ell_2^2}{c_{45}( k_0-k_2 \ell_{2}^{2})}
\end{align*}
where $k_j$ are some constants.
\end{lemma}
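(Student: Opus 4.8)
The plan is to feed the displayed QR form of $B$ — recalling that $A=B$ because the preliminary analysis already gives $w=0$ — together with the relation \eqref{yrite-l} into every constraint $Q_{ij}=c_{ij}$ that survives once the determinant conditions have been absorbed into the shape of $B$, and then to carry out the differential elimination; the hypothesis $k_2k_0\neq0$ simply selects one branch of the case split. With $w=0$ the formula \eqref{Q-kaava} reduces to $Q_{ij}=\langle B_i',B_j\rangle-\langle B_i,B_j'\rangle$, and using $\ell_1=k_2\ell_2+k_1+k_0/\ell_2$ and $\ell_1'=(k_2-k_0/\ell_2^2)\ell_2'$ each of the twelve remaining constraints becomes a differential polynomial in $\ell_2,b_{12},b_{13},b_{15},b_{23},b_{25}$. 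Several of them are affine in the first derivatives $\ell_2',b_{12}',b_{13}',b_{15}',b_{23}',b_{25}'$; the first move is to solve this linear subsystem for those derivatives, which leaves a system of purely algebraic relations among $\ell_2,b_{12},b_{13},b_{15},b_{23},b_{25}$ and the $c_{ij}$, together with a few genuine ODEs.

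From the algebraic relations I expect $b_{13}$ and $b_{15}$ to decouple and be forced to zero — exactly as the analogous $b_{12},b_{13},b_{23}$ are forced to vanish in Theorem \ref{hyper-m=6} of Section 6 — whenever an irreducible solution is sought; where the shape of $v$ leaves room for it, Lemma \ref{redusointi} may be used instead to reach the same normal form. Setting $b_{13}=b_{15}=0$, the remaining algebra collapses to $b_{23}=(k_4\ell_2-k_0)b_{25}$, to $b_{12}^2=c_{12}b_{25}^2\ell_2^2/\big(c_{45}(k_0-k_2\ell_2^2)\big)$ (which already forces $c_{12},c_{45}\neq0$), and to a single first-order relation $b_{25}^2\,\ell_2'=c_{45}$ coming from one constraint such as $Q_{45}=c_{45}$. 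There is then exactly one constraint not yet used; substituting all of the above into it and clearing denominators produces an equation in $\ell_2,\ell_2'$ alone, and eliminating $b_{25}$ via $b_{25}^2\ell_2'=c_{45}$ turns it into the cubic ODE $(\ell_2')^3=k_3\ell_2^4/(k_0-k_2\ell_2^2)$ — the third power being the genuine 3D phenomenon, the same one responsible for $(\ell_1')^3=k_0\ell_1^2(m_1\ell_1+m_0)^2$ and $\lambda_1'\lambda_2'(\lambda_1'+\lambda_2')=c_{16}c_{24}c_{35}$ in Section 6, and here equivalent to the cube $b_{25}^6=c_{45}^3(k_0-k_2\ell_2^2)/\big(k_3\ell_2^4\big)$ obtained by cubing $b_{25}^2=c_{45}/\ell_2'$ and substituting the ODE.

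It remains to check that the functions so obtained actually satisfy all twenty original constraints; this is a finite substitution, tedious by hand but routine in {\sc Maple}, and it is also what pins down $k_3$ and $k_4$ as the specific polynomial combinations of the $c_{ij}$, $k_0$, $k_1$, $k_2$ that make every equation hold identically. I expect the real work to be in the middle step: organizing the elimination so that the twelve $Q_{ij}=c_{ij}$ equations genuinely collapse to this one ODE plus the three algebraic relations rather than to an inconsistent or a larger residual system — in other words, confirming that the $k_2k_0\neq0$ branch is nonempty and that $b_{13}=b_{15}=0$ is the correct, and only irreducible, reduction within it.
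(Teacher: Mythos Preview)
Your proposal is correct and follows essentially the same route as the paper: substitute $w=0$ and the relation $\ell_1=k_2\ell_2+k_1+k_0/\ell_2$ into the $Q_{ij}=c_{ij}$ system, discover that $b_{13}=b_{15}=0$ and $\ell_2'=c_{45}/b_{25}^2$, and then read off the remaining algebraic relations and the cubic ODE for $\ell_2$. The only minor difference is in the mechanism for obtaining $b_{13}=b_{15}=0$: the paper first extracts genuine differential equations for $b_{13}$ and $b_{15}$ and then, upon substituting those back into the rest of the system, finds they are forced to vanish; you anticipate this as a direct algebraic consequence (possibly aided by Lemma~\ref{redusointi}), which is the same elimination viewed from a slightly different angle.
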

\begin{proof}
Starting from the equation \eqref{yrite-l} we first get some differential equations for $b_{13}$ and $b_{15}$. However, substituting these back to the system we notice that in fact  $b_{13}=b_{15}=0$ and $\ell_2'=c_{45}/b_{25}^2$. Using these the rest follows easily.
\end{proof}

Note that the differential equation for $\ell_2$ has a discrete symmetry: if $\mu$ is a solution then also $k_0/(k_2\mu)$ is a solution. Apparently one cannot explicitly solve the equation for $\ell_2$.

The vorticity is now
\[
  h=\begin{pmatrix}
  -f^{3}_{100} \left(c_{45} f^{1}_{001}+c_{35}\right)\\[1mm]
z_2 \left(c_{45} f^{1}_{001}+c_{35}\right) f^{3}_{200}+\left( c_{45}f^{2}_{100}+r_{1}f^{3}_{100} \right) f^{1}_{001}+
c_{35}f^{2}_{100} - r_{0}f^{3}_{100}
\\[1mm] 
c_{45} k_{2} \left(f^{3}_{100}\right)^{2}+\left(c_{15}-c_{26}\right) f^{3}_{100}+c_{12}
  \end{pmatrix}\ ,
\]
where $r_j$ are some constants.

\begin{lemma}
Let us assume that $k_0=0$; then 
\begin{align*}
l_{1} = & k_2k_4t^3+k_1&
l_{2} =& k_4t^3&
b_{12} =&k_5/t&
b_{13} = &k_6t^2\\
b_{15} = &0&
b_{23} = &k_7t^2&
b_{25} = &k_3/t
\end{align*}
where $k_j$ are constants.
\label{2-perhe}
\end{lemma}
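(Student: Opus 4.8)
The plan is to impose $k_0=0$ in the general relation \eqref{yrite-l}, so that $\ell_1=k_2\ell_2+k_1$ is now an affine function of $\ell_2$, and to feed this together with the already-established fact $w=0$ into the full set of conditions $Q_{ij}=c_{ij}$ for the matrix $B$ displayed above. Since $w=0$, each $Q_{ij}$ equals $\langle B_i',B_j\rangle-\langle B_i,B_j'\rangle$, so after the substitution the system becomes a collection of first-order ODEs in $\ell_2,b_{12},b_{13},b_{15},b_{23},b_{25}$ with polynomial coefficients. Exactly as in the $k_2k_0\neq0$ case treated just above, I would first extract from the relation \eqref{yrite-l} some differential equations for $b_{13}$ and $b_{15}$, and then substitute them back into the remaining equations.

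The key difference from the $k_2k_0\neq0$ case is that when $k_0=0$ the quadratic factor $k_0-k_2\ell_2^2$ that obstructed integrability there collapses to a monomial in $\ell_2$. Substituting back I expect to find that this forces $b_{15}=0$ and $\ell_2'=c_{45}/b_{25}^2$, and that $b_{13}$, $b_{23}$ become proportional to $b_{25}\ell_2$ while $b_{12}$ becomes proportional to $b_{25}$; unlike in the previous case $b_{13}$ need not vanish. The leftover equations then pin down $b_{25}$ algebraically in terms of $\ell_2$ as $b_{25}\propto\ell_2^{-1/3}$, and combining this with $\ell_2'=c_{45}/b_{25}^2$ gives the separable ODE $(\ell_2')^3=\mathrm{const}\cdot\ell_2^2$. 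Integrating yields $\ell_2^{1/3}=\mathrm{const}\cdot t+\mathrm{const}$; since the Euler system is autonomous we may translate $t$ to kill the additive constant, obtaining $\ell_2=k_4t^3$, and then $b_{25}=k_3/t$, $b_{12}=k_5/t$, $b_{13}=k_6t^2$, $b_{23}=k_7t^2$, $b_{15}=0$, $\ell_1=k_2\ell_2+k_1=k_2k_4t^3+k_1$. Along the way one checks that all the other conditions $Q_{ij}=c_{ij}$ and the determinant conditions merely impose relations among the constants, and that the resulting family is irreducible in the sense of Lemma \ref{redusointi2}.

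The only genuine difficulty is computational: written out in terms of $b_{12},\dots,b_{25},\ell_2$ the quantities $Q_{ij}$ are far too large to display, and the elimination leading to the clean equation $(\ell_2')^3\propto\ell_2^2$ is not feasible by hand. I would therefore carry out the elimination with a differential-elimination routine such as \textsf{rifsimp} \cite{rif0} to isolate the $k_0=0$ branch, and then simply verify the stated formulas by direct substitution into all the constancy conditions for $p_{ijk}$ and $Q_{ij}$ — a long but entirely mechanical check in a computer algebra system, exactly as is done for the other cases in the paper.
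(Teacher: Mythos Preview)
Your proposal is correct and follows essentially the same substitute-and-eliminate strategy as the paper, relying on computer algebra for the heavy lifting and on direct substitution for verification. The only minor difference is bookkeeping: the paper solves $b_{13},b_{15},b_{23},b_{25}$ in terms of $(\ell_2,b_{12})$ and then reduces to a two-function system in those variables, whereas you mimic the $k_2k_0\neq0$ case and effectively reduce to $(\ell_2,b_{25})$; since $b_{12}$ and $b_{25}$ end up proportional, the two eliminations are equivalent and lead to the same final ODE $(\ell_2')^3\propto\ell_2^2$.
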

\begin{proof}
Substituting $w=0$ and $\ell_1=k_2\ell_1+k_1$ we can solve $b_{13}$, $b_{15}$, $b_{23}$ and $b_{25}$ in terms of $\ell_2$ and $b_{12}$. Substituting this back we obtain a system for $\ell_2$ and $b_{12}$ whose solution is as above. Then the rest follows easily.
\end{proof}

\begin{lemma}
Let us assume that $k_2=0$; then
\begin{align*}
 l_{1} = &k_1+k_0t^3/k_4&
l_{2} =&k_4/t^3 &    b_{15} = &
k_6t^2&  b_{25} =&k_5t^2\\
b_{12} =& k_3/t &
b_{13} =&-k_0k_6t^2+k_7/t
&
b_{23} = &
-k_0k_5t^2+k_8/t
\end{align*}
where $k_j$ are constants.
\label{3-perhe}
\end{lemma}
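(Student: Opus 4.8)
The plan is to mirror the strategy used in the $k_2 k_0 \neq 0$ case but with the degeneracy $k_2 = 0$ taken into account from the outset. We start from the defining relations for the $B$-matrix displayed before the three lemmas, together with the algebraic relation \eqref{yrite-l}, which now reads $\ell_1 = k_1 + k_0/\ell_2$. As in the previous lemmas, the first step is to record the preliminary consequences of the $Q_{ij}=c_{ij}$ equations: $w=0$ (so $A=B$), and the basic relation $\ell_2' = c_{45}/b_{25}^2$ coming from the equation for $Q_{45}$. These reduce the unknowns to $\ell_2$, $b_{12}$, $b_{13}$, $b_{15}$, $b_{23}$, $b_{25}$ subject to the remaining $Q$-equations.

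Next I would substitute $\ell_1 = k_1 + k_0/\ell_2$ and $w=0$ into the full system and use the remaining constant-$Q_{ij}$ conditions to express $b_{13}$, $b_{15}$, $b_{23}$, $b_{25}$ in terms of $\ell_2$ and $b_{12}$, exactly as indicated in the proof of Lemma \ref{3-perhe}'s sibling cases. The structure of these equations is linear in the $b_{ij}$'s once $\ell_2$ is treated as known, so this elimination is a finite (if bulky) linear-algebra computation over the field of rational functions in $\ell_2, b_{12}$. After back-substitution one is left with a coupled ODE system for the pair $(\ell_2, b_{12})$ alone; the claim is that its general solution is $\ell_2 = k_4/t^3$, $b_{12} = k_3/t$ (up to the obvious time-translation freedom, which may be absorbed), and then the stated formulas for $b_{13}, b_{15}, b_{23}, b_{25}$ follow by reading off the expressions obtained in the elimination step, producing the powers $t^2$ and $1/t$ together with the constants $k_5,\dots,k_8$ and the $-k_0 b_{15}$, $-k_0 b_{25}$ corrections that appear because $\ell_1$ still contains the $k_0/\ell_2$ term. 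Finally $\ell_1 = k_1 + k_0 t^3/k_4$ is immediate from \eqref{yrite-l}.

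**The hard part will be** the elimination step that reduces everything to the $(\ell_2, b_{12})$ system and the subsequent integration of that system: the intermediate expressions are genuinely large (the paper itself remarks repeatedly that such formulas are too big to display), and one must check that no spurious branches are introduced and that the case $k_2=0$ really is disjoint from the $k_0=0$ family of Lemma \ref{2-perhe} and the $k_2k_0\neq 0$ family. In practice this is carried out with \textsf{rifsimp} in {\sc Maple}, discarding the reducible branches by Lemma \ref{redusointi2} as usual, and the verification that the displayed functions satisfy all twenty constant-$Q_{ij}$ conditions and the eight determinant conditions is then a direct substitution. I would present the proof exactly in that spirit: indicate the preliminary reductions ($w=0$, $\ell_2'=c_{45}/b_{25}^2$), note that the $b_{ij}$ can be solved in terms of $\ell_2$ and $b_{12}$, state that the resulting reduced system for $(\ell_2,b_{12})$ integrates to give $\ell_2 = k_4/t^3$, $b_{12}=k_3/t$, and observe that the remaining formulas then follow at once — leaving the bulk of the symbolic computation, as elsewhere in the paper, to be checked by machine.
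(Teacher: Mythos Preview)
Your plan is correct and follows essentially the same route as the paper: substitute $w=0$ and \eqref{yrite-l}, eliminate all but two of the $b_{ij}$, reduce to a two-variable ODE system, and integrate. The only difference is the choice of base pair: you reduce to $(\ell_2,b_{12})$ as in Lemma~\ref{2-perhe}, whereas the paper here reduces to $(\ell_2,b_{25})$, noting along the way the algebraic relation $b_{12}=\hat k\,b_{25}\ell_2$ that makes the two choices interchangeable. Either elimination leads to the stated solution.
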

\begin{proof}
Substituting $w=0$ and $\ell_1=k_1+k_0/\ell_2$ we can solve $b_{12}$, $b_{13}$ and $b_{23}$ in terms of $\ell_2$ and $b_{25}$, and in particular $b_{12}=\hat k b_{25}\ell_2$ for some $\hat k$. Substituting these back to the equations we are left with equations for $\ell_2$ and $b_{25}$ whose solution is given above. After this the rest follows easily.
\end{proof}

Note that $b_{12}$ is the same in both Lemma \ref{2-perhe} and Lemma \ref{3-perhe}.

\subsection{Extension for the spatial component}
In both Lemmas \ref{2-perhe} and \ref{3-perhe} the entries of $A$ are constant linear combinations of $t^2$ and $1/t$, so by linear transformations we can bring $A$ to the form
\[
A=\begin{pmatrix}
    t^2 & 1/t & 0 & 0 & 0 & 0 \\
    0 & 0 & t^2 & 1/t & 0 & 0 \\
    0 & 0 & 0 & 0 & t^2 & 1/t
\end{pmatrix}\, .
\]
Let us present the constraints for $v$ when $A$ is as above. The equations $v$ needs to satisfy are
\[
    g_{135}=g_{246}=g_{235}+g_{136}+g_{145}=0 \, .
\]
Writing $v=(z,f(z))$ this gives
\begin{align*}
    &f^2_{010}=0\\
    &f^1_{100}f^3_{001}-f^3_{100}f^1_{001} = 0\\
    &f^2_{100}-f^3_{010}+f^1_{010}f^2_{001} = 0 \, .
\end{align*}
Since $f^2$ does not depend on $z_2$, the last equation gives
\[
f^3=z_2f^2_{100}+f^1f^2_{001}+g(z_1,z_3)\, .
\]
Substituting this to $f^1_{100}f^3_{001}-f^3_{100}f^1_{001} = 0$ gives a standard first order PDE for $f^1$ so the local solution exists and $f^2$ and $g$ are arbitrary. Note that here $f^1$ and $f^3$ depend nontrivially on $z_2$.

%%%%%%%%%%%%%%%%%%%%%%%%%%%%%%%%%%%%%%%%%%%%%%%%%%%%%%%%%%%%%%%%%%%%%%

%%%%%%%%%%%%%%%%%%%%%%%%%%%%%%%%%%%%%%%%%%%%%%%%%%%%%%%%%%%%%%%%%%%%%%

\end{document}